\def\sA{{\mathfrak A}}      
\def\sD{{\mathfrak D}}   \def\sE{{\mathfrak E}}   
   \def\sH{{\mathfrak H}}   
   \def\sK{{\mathfrak K}}   \def\sL{{\mathfrak L}}
\def\sM{{\mathfrak M}}      
      \def\sR{{\mathfrak R}}
      \def\sX{{\mathfrak X}}
\def\sY{{\mathfrak Y}}
\def\st{{\mathfrak t}}
   \def\dN{{\mathbb N}}
\def\cD{{\mathcal D}}
\def\bB{{\mathbf{B}}}
\def\bLR{{\mathbf{L}}}
\def\bm\chi{\mbox{\boldmath$\chi$}}
\def\half{{\frac{1}{2}}}
\def\col{{\rm col\,}}
\def\ker{{\rm ker\,}}
\def\ran{{\rm ran\,}}
\def\cran{{\rm \overline{ran}\,}}
\def\dom{{\rm dom\,}}
\def\mul{{\rm mul\,}}
\def\cdom{{\rm \overline{dom}\,}}
\def\clos{{\rm clos\,}}
\def\dim{{\rm dim\,}}
\let\xker=\ker \def\ker{{\xker\,}}
\DeclareMathOperator{\hplus}{\, \widehat + \,}
\newtheorem{theorem}{Theorem}[section]
\newtheorem{proposition}[theorem]{Proposition}
\newtheorem{corollary}[theorem]{Corollary}
\newtheorem{lemma}[theorem]{Lemma}
\newtheorem{definition}[theorem]{Definition}
\theoremstyle{definition}
\newtheorem{remark}[theorem]{Remark}
\numberwithin{equation}{section}
\begin{document}
\title[Lebesgue type decompositions]
{Complementation and Lebesgue type \\  decompositions
of linear operators \\ and relations}

\author[S.~Hassi]{S.~Hassi}
\author[H.S.V.~de~Snoo]{H.S.V.~de~Snoo}

\address{Department of Mathematics and Statistics \\
University of Vaasa \\
P.O. Box 700, 65101 Vaasa \\
Finland} \email{sha@uwasa.fi}

\address{Bernoulli Institute for Mathematics, Computer Science and Artificial Intelligence \\
University of Groningen \\
P.O. Box 407, 9700 AK Groningen \\
Nederland}
\email{h.s.v.de.snoo@rug.nl}

\date{}
\thanks{Part of this paper was completed during the Workshop ``Spectral theory of differential operators in quantum theory'' at the Erwin Schr\"odinger International Institute for Mathematics and Physics (ESI) (Wien, November  7--11, 2022). The ESI support is gratefully acknowledged.}


\keywords{Hilbert space, linear operator, operator range, complementation, Lebesgue type decomposition} \subjclass{Primary 46C07, 47A65; Secondary 47A05, 47A06, 47A07}

\begin{abstract}
In this paper a new general approach is developed to construct and study
Lebesgue type decompositions of linear operators $T$ in the Hilbert space setting.
The new approach allows to introduce an essentially wider class
of Lebesgue type decompositions
than what has been studied in the literature so far.
The key point is that it allows a nontrivial interaction
between the closable and the singular components of $T$.
The motivation to study such decompositions comes
from the fact that they naturally occur in the
corresponding Lebesgue type decomposition for pairs of quadratic forms.
The approach built in this paper uses so-called complementation in Hilbert spaces,
a notion going back to de Branges and Rovnyak.
\end{abstract}

\maketitle

\section{Introduction}

The usual Lebesgue decomposition of measures has inspired the study of similar
decompositions of, for instance, pairs of positive operators and semibounded forms; see \cite{An,AnZ1986,HSeSnLeb2009,S3}.
In the context of linear operators or linear relations such decompositions
can be seen as the source for all the other Lebesgue type decompositions.
It should be noted that the standard Lebesgue decomposition of a pair
of positive measures can be obtained
as a special case of the Lebesgue decomposition of a pair of nonnegative forms;
for details, see \cite{HSeSnLeb2009}.

\medskip

In this paper a new general type of decomposition of linear operators
and, more generally, of linear relations is introduced and explained which
allows a nontrivial interaction between the closable component and the
singular component.
This work is inspired by the Lebesgue type decompositions for a pair of forms
which have been studied in \cite{HSeSnLeb2009}.
It turned out that in the Lebesgue type decompositions of
nonnegative forms $\st$ written as the additive sum $\st=\st_1+\st_2$,
where $\st_1$ is a closable form and $\st_2$ is a singular form,
the components $\st_1$ and $\st_2$
need not in general be singular with respect to each other.
In the setting of measures this corresponds to the situation,
where the absolute continuous and the singular component
are not mutually singular with respect to each other.
On the other hand, all Lebesgue type decompositions
of a nonnegative quadratic form can be derived by
introducing a so-called representing map $Q:\sH\to\sK$
(here $\sH$ and $\sK$ are Hilbert spaces) for the form $\st$:
$\st[h,k]=(Qh,Qk)$, $h,k\in\dom \st=\dom Q$;
a detailed study of representing maps will appear in \cite{HS2022c}.
A key fact in the connection of quadratic forms is
that the components $\st_1$ and $\st_2$ generate
a nonnegative contraction $K$ acting in the range space $\sK$ of $Q$,
such that $\st_1[h,k]=((I-K)^{\half}h,(I-K)^{\half}k)$,
$\st_2[h,k]=((K^{\half}h,K^{\half}k)$,
and then one can prove the following general formula
\[
 (\st_1:\st_2)[h,k]=(((I-K):K)Qh,Qk), \quad h,k\in\dom \st=\dom Q,
\]
where ``$:$'' stands for the parallel sum of the involved components; see \cite{HS2022c}.
Recall from \cite[Proposition~2.10]{HSeSnLeb2009}
that the forms $\st_1$ and $\st_2$ are mutually singular  precisely when $\st_1:\st_2=0$,
while $(I-K):K=0$ if and only if $K$ is an orthogonal projection,
which is equivalent to the intersection $\ran (I-K)\cap \ran K\neq \{0\}$ being nontrivial;
further details can be found in \cite{HS2022c}.

\medskip

The new approach developed here to analyse this phenomenon
on the side of linear operators and their Lebesgue type decompositions
allowing such an interaction between the closable and singular parts of $T$
is built in this paper by using the notion of
complementation going back to de Branges and Rovnyak.
This leads to several new results on Lebesgue type decompositions
of unbounded operators and linear relations.
In particular, the results generalize recent results obtained
in the case of orthogonal operator range decompositions in \cite{HSS2018,HS2022a}.
For instance, among the set of all such Lebesgue type decompositions
there is still a unique decomposition,
whose regular part in this new setting continues to be maximal;
it is called the Lebesgue decomposition of linear operators, cf. \cite{HS2022a}.

\medskip

Let $T \in \bLR(\sH, \sK)$, that is, $T$  is a linear operator or relation
from a Hilbert space $\sH$ to a Hilbert space $\sK$.
Denote by $T^{**}$ the closure of $T$ (as a graph in the Cartesian product $\sH \times \sK$);
moreover,
$\mul T^{**}$ stands for the linear space of all $g \in \sK$ for which $\{0,g\} \in T^{**}$.
For $T^{**}$ there are two extreme cases:  the \textit{closable} case is defined by the equality
$\mul T^{**}=\{0\}$, that is,
the closure $T^{**}$ is an operator, and the \textit{singular} case is defined by the equality
$T^{**}=\dom T^{**} \times \mul T^{**}$,
that is,  $T^{**}$ is the Cartesian product of closed linear subspaces of $\sH$ and $\sK$.
In general, a linear relation is neither closable nor singular.
However, every  $T \in \bLR(\sH, \sK)$ has a sum decomposition
 $T=T_1+T_2$ of the form
\begin{equation*}
 T=\big\{ \{f,g\} \in \sH \times \sK:\, g=g_1+g_2,  \,\{f, g_1\} \in T_1, \, \{f,g_2\} \in T_2 \big\},
\end{equation*}
where $T_1, T_2 \in \bLR(\sH, \sK)$
with $\dom T=\dom T_1=\dom T_2$,
while $T_1$ is closable and $T_2$ is singular.   Such a sum decomposition  $T=T_1+T_2$
is called an \textit{orthogonal Lebesgue type decomposition} of $T$
if  the Hilbert space $\sK$ is the
orthogonal sum of the closed linear subspaces $\sX$ and $\sY$ of $\sK$,
such that $\ran T_1 \subset \sX$ and  $\ran T_2 \subset \sY$.
The usual Lebesgue decomposition of $T$ is an example
of an orthogonal Lebesgue type decomposition.
In the case of an orthogonal Lebesgue type decomposition
there is only trivial interaction between the summands $T_1$ and $T_2$
as their ranges are orthogonal. Orthogonal Lebesgue type decompositions
have been studied in \cite{HSS2018, HS2022a}, extending
earlier work of Izumino \cite{I89a, I89b, I93}.

\medskip

In this paper more general, pseudo-orthogonal,
decompositions  $T=T_1+T_2$ will be introduced.
The decomposition of the space $\sK$ will be based on
a pair of complemented operator range spaces  $\sX$ and $\sY$
with inner products $(\cdot, \cdot)_\sX$ and $(\cdot, \cdot)_\sY$
which are contained contractively in $\sK$; such spaces were
introduced by de Branges and Rovnyak,
see \cite{dB, DR90, DR91, FW}.
It is assumed that $\sX$ and $\sY$ are generated by nonnegative
contractions $X,Y \in \bB(\sK)$ for which
\[
 \|h\|_\sK^2=\|Xh\|_\sX^2+\|Yh\|_\sY^2, \quad h \in \sK,
\]
and this is equivalent to the condition $X+Y=I$;
moreover, this condition automatically leads to $\sK=\sX+\sY$.
For the sum decomposition $T=T_1+T_2$ which satisfies $\dom T=\dom T_1=\dom T_2$,
while $T_1$ closable and $T_2$ singular,
one now requires that
$\ran T_1 \subset \sX$ and  $\ran T_2 \subset \sY$,
in which case the decomposition of any $g \in \ran T$ as $g=g_1+g_2$
with $g_1 \in \ran T_1$ and $g_2 \in \ran T_2$
leads to the inequality
\[
\| g\|_\sH^2 \leq \|g_1\|_\sX^2 + \|g_1\|_\sY^2.
\]
The further condition
is that instead of this inequality there is the  Pythagorean equality
\[
\| g\|_\sH^2 = \|g_1\|_\sX^2 + \|g_1\|_\sY^2.
\]
In this case one speaks of a
\textit{pseudo-orthogonal Lebesgue type decomposition} of $T$.
In the orthogonal case  the closed linear subspaces $\sX$ and $\sY$
are isometrically contained in $\sK$ and $\ran T_1 \perp \ran T_2$,
so that the Pythagorean equality
is automatically satisfied.
The new feature with complemented operator range spaces $\sX$ and $\sY$
which are contractively contained
in the original Hilbert space $\sK$ is that there is an overlapping space $\sX \cap \sY$;
moreover $\sX \cap \sY$ contains the intersection $\ran X \cap \ran Y$.
This overlapping space has consequences for the interaction of the components
$T_1$ and $T_2$: it may now happen that  $\cran T_1 \cap \cran T_2$  is nontrivial,
or even that   $\ran T_1 \cap \ran T_2$ is nontrivial.

\medskip

It will be shown that the pseudo-orthogonal Lebesgue type decompositions
of a linear relation $T \in \bLR(\sH, \sK)$ are all of the form
\begin{equation*}
 T=T_1+T_2 \quad \mbox{with} \quad T_1=(I-K)T, \quad T_2=KT,
\end{equation*}
where $K \in \bB(\sK)$ is a nonnegative contraction for which $(I-K)T$
is closable and $KT$ is singular.
The pseudo-orthogonal Lebesgue type decomposition
is orthogonal precisely if $K$ is an orthogonal projection.
The usual Lebesgue decomposition
$T=T_{\rm reg} +T_{\rm sing}$ is orthogonal and it is uniquely defined by the property
that $T_{\rm reg}$ is the largest closable part of $T$ among
all pseudo-orthogonal decompositions of $T$.
Furthermore, there is a characterization of the situation
where the Lebesgue decomposition
is the only pseudo-orthogonal Lebesgue type decomposition of $T$.
In the special case that $T$ is an operator range relation, i.e.,
\[
 T=\big\{ \{\Phi \eta, \Psi \eta \} :\, \eta \in \sE \big\},
\]
where $\Phi \in \bB(\sE, \sH)$, $\Psi \in \bB(\sE, \sK)$,
and $\sE$, $\sH$, and $\sK$ are Hilbert spaces,
the pseudo-orthogonal Lebesgue type decompositions
of $T$ translate into the so-called
pseudo-orthogonal Lebesgue type decompositions
of the operator $\Psi$ in terms of $\Phi$; for the orthogonal case
and the notion of Radon-Nikodym derivative,
see \cite{HS2022a}.

\medskip

The contents of the paper are now described.
A short introduction to pairs of complemented operator range spaces
can be found in Section \ref{compl}.
This section is modeled on the relevant appendix in \cite{DR90}.
General pseudo-orthogonal decompositions
of a linear relation are introduced in Section \ref{overlap},
where also some material on the occurence of overlapping in a
pseudo-orthogonal decomposition  can be found.
For a relation $T \in \bLR(\sH, \sK)$
and a nonnegative contraction $R \in \bB(\sK)$
there are a number of criteria in Section \ref{Regsing}
under which the product relation $RT$ is regular or singular.
In Section \ref{lebestype}
the previous characterizations are used to study
the pseudo-orthogonal Lebesgue type decompositions of a linear
relation $T$.
The particular case where $T$ is an operator range relation
is briefly reviewed in Section \ref{OpRan}; see \cite{HS2022a}
for the orthogonal case and the corresponding Radon-Nikodym derivatices.

\medskip

The Lebesgue decomposition for measures and the associated
Radon-Nikodym derivatives for their absolutely continuous parts  have seen many
generalizations to more abstract settings.  At this stage it suffices
to mention the work of Dye \cite{Dy1952} and Henle \cite{H1972}.
The second half of the seventies saw the work of Ando \cite{An}
for pairs of nonnegative operators and the work of Simon \cite{S3} for
nonnegative forms.
This lead to many papers devoted to related contexts,
such as $C^*$-algebras and the theory of positive maps; see, for instance,
the references in \cite{AnZ1986, GK2009, HSeSnLeb2009, Kosh},
and note also, more recently,  \cite{CH2022, V2017},
and e.g. the construction of Lebesgue decomposition of
non-commutative measures in multi-variable setting
into absolutely continuous and singular parts via
Lebesgue decompositions for quadratic forms and
via reproducing kernel space techniques; see \cite{JM2019, JM2022}.
Shortly after the papers of Ando and Simon appeared
the work of Jorgensen \cite{J80} and \^Ota \cite{KO, Ota84, Ota87},
which was devoted to the decompositions of  linear operators.
This context (linear operators and also linear relations)
was taken up in  \cite{HSSS2007} and later in \cite{HSS2018, HS2022a}.
The Lebesgue type decompositions in those papers were orthogonal,
whereas in the present paper the pseudo-orthogonal case is dealt with.
Decomposition results in the context of forms,
based on a Hilbert space decomposition similar to the de Branges-Rovnyak decomposition
(as worked out at the end of Section \ref{compl}),
will appear in \cite{HS2022c} and include the results in Simon \cite{S3}.
In the pseudo-orthogonal  decompositions the notion of overlapping spaces appears
in a natural way.
Furthermore, the pseudo-orthogonal situation for a pair of nonnegative bounded operators
(as in \cite{An}) and for a pair of forms on a linear space (as in \cite{HSeSnLeb2009})
can also be treated in the context of the associated linear relations; this is connected
to the recent work by Z. Sebesty\'en, Z. Sz\"ucs, Z. Tarcsay, and T. Titkos
\cite{STT, ST2013, S2013, ST, T2013, T2016, T2020, Tit2012, Tit2013}.

\section{Pseudo-orthogonal decompositions}
\label{compl}

This section provides a short review of pseudo-orthogonal
decompositions of a Hilbert space.
 These decompositions involve linear subspaces of such a Hilbert space,
which are generated by a pair of nonnegative contractions. In such
decompositions there is in general an overlapping of the summands;
 \cite{dB, DR90, DR91, FW}. At the end of this section there is a brief discussion
of an analogous overlapping decomposition; see \cite{HS2022c}.

\medskip

This review begins with the notion of an operator range space.
Let $\sK$ be a Hilbert space and let $A \in \bB(\sK)$ be a
nonnegative contraction.
Provide the range $\sA=\ran A^\half$, a subspace of $\sK$,
 with the inner product
\begin{equation}\label{ipe}
 (A^\half \varphi, A^\half \psi)_\sA=(\pi \varphi, \pi \psi)_\sK, \quad \varphi, \psi \in \sK,
\end{equation}
where $\pi$ is the orthogonal projection in $\sK$ onto
$\cran A^\half=(\ker A^\half)^\perp$. Note that it follows from \eqref{ipe} that the mapping
\begin{equation}\label{ipee}
\varphi \mapsto A^\half \varphi, \quad \varphi \in \cran A^\half,
\end{equation}
is unitary from $\cran A^\half$ onto $\sA$.
Clearly $\sA$ with this inner product is a Hilbert space.
Since $A$ is a contraction, one has
\[
\| A^\half \varphi\|_\sK=\| A^\half \pi \varphi\|_\sK\leq \|\pi \varphi \|_\sK, \quad \varphi \in \sK,
\]
and, hence, the identity \eqref{ipe} shows that
\begin{equation}\label{ipc}
 \|A^\half \varphi \|_\sA \geq \| A^\half \varphi \|_\sK, \quad \varphi \in \sH.
\end{equation}
It is a  consequence of \eqref{ipe} that
\begin{equation}\label{ipcc}
 (A^\half \varphi, A \psi)_\sA=(A^\half \varphi, \psi)_\sK, \quad \varphi, \psi \in \sK,
\end{equation}
which shows that the linear space $\ran A$ is dense in the Hilbert space $\sA$.
Moreover, \eqref{ipcc} leads to the useful identities
 \begin{equation}\label{ip}
 (A \varphi, A \psi)_\sA=(A \varphi, \psi)_\sK  \quad \mbox{and}  \quad
  (A^\half \varphi, A A^\half \psi)_\sA=(A^\half \varphi, A^\half \psi)_\sK, \quad \varphi, \psi \in \sK.
\end{equation}
It is clear that $A$ maps $\sA=\ran A^\half$ into itself;
in fact, it can be seen from \eqref{ip} that $A$ is nonnegative in $\sA$
and that $A$ maps $\sA$  contractively into itself.
Note that if $A$ is an orthogonal projection in $\sK$, then $\pi=A$ and
\begin{equation}\label{iiippp}
(A^\half \varphi, A^\half \psi)_\sA=(A^\half \varphi, A^\half \psi)_\sK, \quad \varphi, \psi \in \sK,
\end{equation}
so that the inner product $(\cdot, \cdot)_\sA$ on $\ran A^{\half}=\ran A$
coincides with the inner product of $\sK$.

\medskip

The equality \eqref{ipe} and the inequality \eqref{ipc} can be formalized.
Recall that a  linear subspace $\sM$ of a Hilbert space $\sK$
is called a \textit{contractive operator range space}, when
$\sM$ has an inner product $(\cdot, \cdot)_\sM$,
such that
\begin{enumerate}\def\labelenumi {\rm (\alph{enumi})}
\item $\|\varphi \|_\sK \leq \| \varphi \|_\sM$, $\varphi \in \sM$;
\item $\sM$ with the inner product $(\cdot, \cdot)_\sM$ is a Hilbert space.
\end{enumerate}
It is clear that the space $\sA$ above
is an example of a contractive operator range space.
In fact, it is the only example; see \cite{HS2022a}.

\medskip

The interest in this section is in pairs of nonnegative contractions $X, Y \in \bB(\sK)$
with the connecting property $X+Y=I$.
 For the convenience of the reader some simple, but useful facts are presented.

\begin{lemma}\label{Klemma}
Let $\sK$ be a Hilbert space and let $X, Y \in \bB(\sK)$
be nonnegative contractions with $X+Y=I$.
Then $XY=YX \in \bB(\sK)$ is a nonnegative contraction with
\begin{equation}\label{KK}
 \ker XY  = \ker X \oplus \ker Y.
\end{equation}
Moreover, the following identities hold:
\begin{equation}\label{K0}
  \left\{ \begin{array}{l}
  \ran X \cap\ran Y=\ran XY, \\
  \ran X^{\half}\cap\ran Y^{\half}=\ran X^{\half}Y^{\half}, \\
  \cran X \cap\cran Y=\cran XY.
\end{array}
\right.
\end{equation}
  Consequently, each of the following statements
\[
 \ran X \cap \ran Y=\{0\}, \quad  \ran X^\half \cap \ran Y^\half=\{0\},
 \]
{\rm (}or, similarly, with the closures of the ranges{\rm)}
 and, in particular $\ran X \perp \ran Y$ or $\ran X^\half \perp \ran Y^\half$,
 is equivalent to the nonnegative contractions $X$ and $Y$
being orthogonal projections.
\end{lemma}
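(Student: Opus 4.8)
The plan is to reduce everything to the single relation $Y = I-X$ and the commutativity it forces, using only elementary range and kernel manipulations; a reader who prefers it may instead read the identities \eqref{KK}--\eqref{K0} off the spectral representation of $X$, by comparing the $d(E(t)\varphi,\varphi)$-integrability of $t^{-1}$ and $(1-t)^{-1}$ (respectively of $t^{-2}$ and $(1-t)^{-2}$) with that of the corresponding products on $[0,1]$.

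First I would record the properties of $XY$. Since $Y = I-X$ commutes with $X$, one has $XY = YX$ immediately, and $XY = X(I-X) = X^{\half}(I-X)X^{\half} = X^{\half}YX^{\half} \geq 0$ because $Y \geq 0$; the contraction property is just $\|XY\| \leq \|X\|\,\|Y\| \leq 1$. For \eqref{KK}, the inclusion $\ker X + \ker Y \subseteq \ker XY$ is clear from $XY = YX$, and $\ker X \perp \ker Y$ because $u \in \ker X$, $v \in \ker Y$ force $Xu = 0$ and $Xv = v$, whence $(u,v) = (u,Xv) = (Xu,v) = 0$; thus the sum is an orthogonal direct sum. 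Conversely, if $XY\varphi = 0$ then $X\varphi = X^{2}\varphi$, so putting $\psi := X\varphi$ one gets $X\psi = X^{2}\varphi = \psi$, i.e. $\psi \in \ker(I-X) = \ker Y$, while $\varphi - \psi = (I-X)\varphi \in \ker X$; hence $\varphi \in \ker X \oplus \ker Y$.

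For the identities in \eqref{K0} the inclusions ``$\subseteq$'' are trivial from $XY = YX$ (and from $X^{\half}Y^{\half} = Y^{\half}X^{\half}$), so the work is in ``$\supseteq$''. For the first identity, if $g = Xa = Yb$ then $Xg = X(Yb) \in \ran XY$ and $Yg = Y(Xa) \in \ran XY$, so $g = (X+Y)g = Xg + Yg \in \ran XY$. For the second, if $g = X^{\half}a = Y^{\half}b$, then $X^{\half}g = X^{\half}Y^{\half}b$ and $Y^{\half}g = Y^{\half}X^{\half}a = X^{\half}Y^{\half}a$ both lie in $\ran X^{\half}Y^{\half}$, hence so do $Xg = X^{\half}(X^{\half}g)$ and $Yg = Y^{\half}(Y^{\half}g)$ (moving the extra $X^{\half}$ or $Y^{\half}$ to the right by commutativity), and again $g = Xg + Yg \in \ran X^{\half}Y^{\half}$. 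The third identity then follows by taking orthogonal complements in \eqref{KK}: since $X$, $Y$, $XY$ are self-adjoint, $\cran XY = (\ker XY)^{\perp} = (\ker X)^{\perp} \cap (\ker Y)^{\perp} = \cran X \cap \cran Y$.

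Finally, for the ``consequently'' part I would show that each listed condition is equivalent to $XY = 0$, and that $XY = 0$ holds if and only if $X$ (hence also $Y = I-X$) is an orthogonal projection. Indeed $XY = 0$ gives $X = X^{2}$, so the self-adjoint $X$ is an orthogonal projection with complement $Y$; conversely complementary orthogonal projections satisfy $X^{\half} = X$, $Y^{\half} = Y$ and have closed, mutually orthogonal ranges, so all the listed conditions hold. For the converses: $\ran X \cap \ran Y = \{0\}$ forces $\ran XY = \{0\}$ by the first identity, i.e. $XY = 0$; $\ran X^{\half} \cap \ran Y^{\half} = \{0\}$ forces $X^{\half}Y^{\half} = 0$ by the second identity, whence $XY = X^{\half}(X^{\half}Y^{\half})Y^{\half} = 0$; the closure versions imply these since $\ran X \subseteq \cran X$ and $\ran X^{\half} \subseteq \cran X^{\half} = \cran X$; and $\ran X \perp \ran Y$ or $\ran X^{\half} \perp \ran Y^{\half}$ each imply the corresponding trivial-intersection condition. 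I expect the only point needing any care to be the ``$\supseteq$'' inclusion in the second identity of \eqref{K0}, namely the observation that $Xg$ and $Yg$ already belong to $\ran X^{\half}Y^{\half}$; everything else is routine.
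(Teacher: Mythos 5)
Your proof is correct and follows essentially the same elementary route as the paper: commutativity from $Y=I-X$, nonnegativity of $XY$ via $X^{\half}YX^{\half}$, the decomposition $h=Xh+Yh$ for the kernel and range identities, and orthogonal complements of \eqref{KK} for the closures. The only cosmetic difference is that for the range identities you conclude via $g=Xg+Yg\in\ran XY$ (resp.\ $\ran X^{\half}Y^{\half}$) where the paper shows $h=Y(h+k)\in\ran Y$ (resp.\ $h\in\ran Y^{\half}$) and applies $X$ (resp.\ $X^{\half}$), and you spell out the final equivalence with $XY=0$ which the paper leaves implicit.
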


\begin{proof}
The commutativity of $X$ and $Y$ and of their square roots is clear.
 Hence the nonnegativity of the product $XY$ follows from $XY=X^\half Y X^\half$.
 Note that $\ker X$ and $\ker Y$ are perpendicular in $\sK$.
It is clear that the right-hand side of \eqref{KK}
is contained in the left-hand side.
To show the remaining inclusion let $h \in \ker XY$. Then
$h=Yh+Xh$ with $Yh \in \ker X$ and $Xh \in \ker Y$.

To see the first identity in \eqref{K0}, let $g\in\ran X \cap \ran Y$.
Then clearly one has $g=X h=Y k$ for some $h,k\in\sK$.
Hence $h=Y(h+k) \in \ran Y$ and $g \in \ran XY$. The reverse inclusion is clear.
For the second identity in \eqref{K0}, let $g\in\ran X^\half\cap \ran Y^\half$.
Then one has, similarly,
$g= X^\half h=Y^\half k$ for some $h,k\in\sK$.
Hence
\[
h=Yh+X^\half  Y^\half k \in \ran Y^\half \quad \mbox{and} \quad g \in \ran X^\half Y^\half.
\]
The reverse inclusion is clear.

By taking orthogonal complements in \eqref{KK} one obtains
the third identity in \eqref{K0} for the closures of the ranges.
\end{proof}

Let $\sK$ be a Hilbert space and let $X, Y \in \bB(\sK)$
be nonnegative contractions with $X+Y=I$.
Let  $\sX=\ran X^\half$ and $\sY=\ran Y^\half$ be the corresponding
operator range spaces; cf. \eqref{ipe}.
Then the Hilbert space has a decomposition of the form
\begin{equation}\label{fw}
 \sK=\sX + \sY.
\end{equation}
This can be seen as follows.
By definition one has $\sX \subset \sK$ and $\sY \subset \sK$,
so that the right-hand side of \eqref{fw} is contained in the left-hand side.
For the converse, observe that  for all $h \in \sK$ one has
$h=Xh + Yh$ with $Xh \in \sX$ and $Yh \in \sY$,
which gives $\sK \subset \sX+\sY$.
The intersection $\sL=\sX \cap \sY$ is called
the \textit{overlapping space} of the Hilbert spaces $\sX$ and $\sY$
with respect to the decomposition \eqref{fw}.
It is characterized in the following lemma.

\begin{lemma}\label{overlapp}
Let $\sK$ be a Hilbert space and let $X, Y \in \bB(\sK)$
be nonnegative contractions with $X+Y=I$.
 The overlapping space $\sL=\sX \cap \sY$ is an operator range space
associated with $X^\half Y^\half$, whose inner product satisfies
\begin{equation}\label{ipover}
 (\varphi,\psi)_\sL=(\varphi,\psi)_\sX+(\varphi,\psi)_\sY,
 \quad \varphi,\psi \in \sX \cap \sY.
\end{equation}
\end{lemma}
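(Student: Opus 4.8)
The plan is to identify $\sL$ as a set by means of Lemma~\ref{Klemma}, recognize it as a concrete operator range space in the sense of \eqref{ipe}, and then transport all three inner products to the common Hilbert space $\sK$ through the canonical unitaries \eqref{ipee}, at which point the identity $X+Y=I$ does the rest.

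First I would settle the set-theoretic picture. By the second line of \eqref{K0} one has $\sL=\sX\cap\sY=\ran X^\half\cap\ran Y^\half=\ran X^\half Y^\half$. Since $X$ and $Y$ commute, $X^\half Y^\half=Y^\half X^\half$ is a nonnegative contraction whose square is $XY$, hence it is the nonnegative square root $(XY)^\half$ of the nonnegative contraction $XY$ of Lemma~\ref{Klemma}. Thus $\sL=\ran(XY)^\half$ is precisely the operator range space attached to $XY$ (equivalently, to $X^\half Y^\half$) through \eqref{ipe}, and \eqref{ipee} provides the unitary map $\xi\mapsto X^\half Y^\half\xi$ from $\cran X^\half Y^\half$ onto $\sL$; in particular $(X^\half Y^\half\omega,X^\half Y^\half\omega')_\sL=(\omega,\omega')_\sK$ for all $\omega,\omega'\in\cran X^\half Y^\half$. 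Finally $\cran X^\half Y^\half=\cran XY$, which by the third line of \eqref{K0} equals $\cran X\cap\cran Y$; equivalently, by \eqref{KK} and the orthogonality $\ker X\perp\ker Y$, it is $(\ker X\oplus\ker Y)^\perp=\cran X^\half\cap\cran Y^\half$.

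Now take $\varphi,\psi\in\sL$ and write $\varphi=X^\half Y^\half\omega$, $\psi=X^\half Y^\half\omega'$ with the unique $\omega,\omega'\in\cran X^\half Y^\half=\cran X\cap\cran Y$, so that $(\varphi,\psi)_\sL=(\omega,\omega')_\sK$. The crucial step is to produce the representatives of $\varphi$ in $\sX$ and in $\sY$. I claim $Y^\half\omega$ is the representative in $\cran X^\half$: indeed $X^\half(Y^\half\omega)=\varphi$, and $Y^\half\omega\perp\ker X^\half=\ker X$ because for $u\in\ker X$ one has $Y^\half u\in\ker X$ (as $XY^\half u=Y^\half Xu=0$) while $\omega\in\cran X$, so $(Y^\half\omega,u)_\sK=(\omega,Y^\half u)_\sK=0$. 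By uniqueness of the $\cran X^\half$-representative this forces $\varphi=X^\half\varphi_X$ with $\varphi_X=Y^\half\omega$; symmetrically $\varphi=Y^\half\varphi_Y$ with $\varphi_Y=X^\half\omega\in\cran Y^\half$, and likewise for $\psi$ with $\omega'$. Then \eqref{ipe}, evaluated on vectors already lying in $\cran X^\half$ (resp. $\cran Y^\half$), yields $(\varphi,\psi)_\sX=(Y^\half\omega,Y^\half\omega')_\sK=(\omega,Y\omega')_\sK$ and $(\varphi,\psi)_\sY=(X^\half\omega,X^\half\omega')_\sK=(\omega,X\omega')_\sK$.

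Adding the last two identities and using $X+Y=I$ gives
\[
 (\varphi,\psi)_\sX+(\varphi,\psi)_\sY=(\omega,(X+Y)\omega')_\sK=(\omega,\omega')_\sK=(\varphi,\psi)_\sL,
\]
which is \eqref{ipover}. The one point requiring care is the identification in the preceding paragraph: one must check that the single vector $\omega\in\cran X^\half Y^\half$ simultaneously delivers, through $Y^\half\omega$ and $X^\half\omega$, the correct kernel-orthogonal representatives in $\sX$ and in $\sY$. This is exactly where the commutativity of $X$ and $Y$ and the normalization $X+Y=I$ — packaged in Lemma~\ref{Klemma} — enter; once it is in place, the Pythagorean identity \eqref{ipover} drops out as a one-line computation.
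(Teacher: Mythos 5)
Your proof is correct and follows essentially the same route as the paper: both identify $\sL=\ran X^\half Y^\half$ via Lemma \ref{Klemma}, use the inclusions $Y^\half\,\cran X^\half Y^\half\subset\cran X^\half$ and $X^\half\,\cran X^\half Y^\half\subset\cran Y^\half$ to read off the $\sX$-, $\sY$-, and $\sL$-inner products through \eqref{ipe}, and then let $X+Y=I$ produce \eqref{ipover}. Your explicit verification that $Y^\half\omega$ and $X^\half\omega$ are the kernel-orthogonal representatives is exactly the point the paper records with those inclusions, so the two arguments differ only in presentation.
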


\begin{proof}

The overlapping $\sL=\sX \cap \sY$ in \eqref{fw} is a linear space given  by
$
 \sL =\ran X^\half Y^\half,
$
as follows from Lemma \ref{Klemma}.
To see \eqref{ipover} first observe for $h,k \in \sK$ that the identity $X+Y=I$ gives
\begin{equation}\label{ipover1}
 (h,k)_\sK  =(Yh,k)_\sK+(Xh,k)_\sK  =(Y^\half h, Y^\half k)_\sK+ (X^\half  h, X^\half  k)_\sK.
\end{equation}
Note that $Y^\half \cran X^\half Y^\half \subset \cran X^\half$
and $X^\half \cran X^\half Y^\half \subset \cran Y^\half$.
Hence, if in \eqref{ipover1} one takes
$h,k \in \cran X^\half Y^\half$, then it follows that
\[
 (X^\half Y^\half h, X^\half Y^\half k)_\sL  =(X^\half Y^\half h, X^\half Y^\half k)_\sX
                           +(Y^\half X^\half h, Y^\half X^\half k)_\sY.
\]
Moreover, it is clear that the last identity holds for all $h,k \in \sK$.
Therefore, the inner product on $\sL$ satisfies \eqref{ipover}.
\end{proof}

Let $\sK$ be a Hilbert space and let $X, Y \in \bB(\sK)$
be nonnegative contractions with $X+Y=I$.  Provide the Cartesian product
$\sX \times \sY$ with the inner product generated by $\sX$ and $\sY$, respectively.
 Define the column operator $V$ from $\sK$ to $\sX \times \sY$ by
\begin{equation}\label{vstar00}
V =\col{ (X,Y)}
= \left\{ \left\{ h, \begin{pmatrix} Xh \\Yh\end{pmatrix} \right\} :\,  h \in \sK \right\}.
\end{equation}
 The operator $V$ is clearly isometric, since
\begin{equation}\label{pyth1}
\begin{split}
  \|Xh\|^2_\sX+ \|Yh\|^2_\sY &=(Xh,h)_\sK+(Yh,h)_\sK =((X+Y)h,h),
  \quad h \in \sK,
 \end{split}
\end{equation}
cf. \eqref{ip}.
Hence, $V$ is a closed operator and $\ran V$ is closed.
In general, the isometry $V$ does not map onto $\sX \times \sY$.

\begin{proposition}\label{DrRoN}
Let $\sK$ be a Hilbert space, let $X, Y \in \bB(\sK)$
be nonnegative contractions, and assume that $X+Y=I$.
Let the column operator $V$ be given by \eqref{vstar00}.
Then the adjoint mapping $V^*$ from $\sX \times \sY$ to $\sK$ is
a partial isometry, given by
 \begin{equation}\label{vstar0}
 V^* \begin{pmatrix} f \\ g \end{pmatrix} =f+g, \quad f \in \sX, \,\,g \in \sY.
\end{equation}
 Consequently, for all $f \in \sX$ and $g \in \sY$, there is the inequality
\begin{equation}\label{inek}
 \|f+g\|_\sK^2 \leq \|f\|_\sX^2+\|g\|_\sY^2,
\end{equation}
with equality in \eqref{inek} if and only  if
 $f=Xh$ and $g=Yh$ for some $h \in \sK$.
\end{proposition}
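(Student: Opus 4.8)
The plan is to identify $V^*$ by a direct adjoint computation carried out with the operator range inner products of $\sX$ and $\sY$, and then to read off both \eqref{inek} and its equality case from the general fact that the adjoint of an isometry is a partial isometry whose initial space is the (here closed) range of the isometry.

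First I would fix $h \in \sK$ together with $f \in \sX$ and $g \in \sY$, and evaluate the pairing $(Vh,(f,g))_{\sX \times \sY} = (Xh,f)_\sX + (Yh,g)_\sY$. The decisive point is the identity $(Xh,f)_\sX = (h,f)_\sK$ valid for every $f \in \sX$: writing $f = X^\half \varphi$ with $\varphi \in \cran X^\half$ and $Xh = X^\half(X^\half h)$, formula \eqref{ipe} gives
\[
 (Xh,f)_\sX = (\pi X^\half h, \pi \varphi)_\sK = (X^\half h, \varphi)_\sK = (h, X^\half \varphi)_\sK = (h,f)_\sK,
\]
since the projection $\pi$ fixes both $X^\half h \in \ran X^\half$ and $\varphi \in \cran X^\half$, and $X^\half$ is self-adjoint. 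The same identity with $Y$ in place of $X$ yields $(Yh,g)_\sY = (h,g)_\sK$, so that $(Vh,(f,g))_{\sX \times \sY} = (h, f+g)_\sK$ for all $h \in \sK$. By the definition of the adjoint this is exactly the assertion $V^*(f,g) = f+g$, which is \eqref{vstar0}.

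Next I would use that $V$ is isometric, as was verified in \eqref{pyth1}, hence $V^*V = I_\sK$; consequently $VV^*$ is a self-adjoint idempotent, that is, the orthogonal projection in $\sX \times \sY$ onto $\ran V$, which is closed as already noted. Equivalently, $V^*$ is a partial isometry with initial space $\cran V = \ran V$; in particular $V^*$ is a contraction, so that for $f \in \sX$ and $g \in \sY$
\[
 \|f+g\|_\sK^2 = \|V^*(f,g)\|_\sK^2 \le \|(f,g)\|_{\sX \times \sY}^2 = \|f\|_\sX^2 + \|g\|_\sY^2,
\]
which is \eqref{inek}. Finally, for the equality statement I would invoke that a partial isometry preserves norms exactly on the vectors of its initial space: equality holds above if and only if $(f,g) \in \ran V$, that is $(f,g) = Vh = (Xh, Yh)$ for some $h \in \sK$, i.e. $f = Xh$ and $g = Yh$. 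The only step needing genuine care is the inner-product identity $(Xh,f)_\sX = (h,f)_\sK$ and its $Y$-analogue on the operator range spaces; once $V^*$ has been identified, everything else is a formal consequence of $V$ being a closed isometry with closed range.
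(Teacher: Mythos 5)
Your proposal is correct and follows essentially the same route as the paper: identify $V^*(f,g)=f+g$ by an adjoint computation resting on the operator range identity $(f,Xh)_\sX=(f,h)_\sK$ (the paper's \eqref{ipcc}, a consequence of \eqref{ipe}), and then deduce \eqref{inek} together with its equality case from the fact that $V^*$ is a partial isometry with initial space the closed range of the isometry $V$. Your explicit verification of the inner-product identity and of the equality criterion via $VV^*$ being the orthogonal projection onto $\ran V$ merely spells out steps the paper leaves implicit.
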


\begin{proof}
 A simple calculation gives for all $f \in \sX$, $g \in \sY$,
and $h \in \sK$ that
\[
\begin{split}
\left( V^* \begin{pmatrix} f \\ g \end{pmatrix} , h \right)_\sK
&=\left( \begin{pmatrix} f \\ g \end{pmatrix} , V h \right)_{\sX \times \sY}
=\left( \begin{pmatrix} f \\ g \end{pmatrix} ,
          \begin{pmatrix} Xh \\Yh\end{pmatrix} \right)_{\sX \times \sY} \\
&=(f, Xh)_\sX+(g, Yh)_\sY=(f, h)_\sK + (g, h)_\sK \\&=(f+g, h)_\sK,
\end{split}
\]
which follows from \eqref{ipe}; the identity shows \eqref{vstar0}.
Since $V$ is an isometry, $V^*$ is partially isometric
and, in particular, $V^*$ is contractive.
Moreover, according to \eqref{fw}, the mapping $V^*$ is onto.
Thus \eqref{vstar0} implies \eqref{inek}.
Finally, there is equality in \eqref{inek} if and only if $\binom{f}{g} \in \ran V$.
\end{proof}

The connection between the overlapping space $\sL=\sX \cap \sY$ and
the range of the isometry $V$ is now clear.

\begin{proposition}\label{ranv}
The isometry $V$ satisfies
\begin{equation}\label{vstar0++}
\begin{split}
   (\ran V)^\perp
  & =\left\{ \begin{pmatrix} -X^\half Y^\half k \\ X^\half Y^\half k \end{pmatrix} :\,
   k \in \cran XY  \right\}.
 \end{split}
 \end{equation}
Moreover, $V$ is surjective if and only if $X$ and $Y$ are orthogonal projections.
\end{proposition}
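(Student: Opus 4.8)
The plan is to compute $(\ran V)^\perp$ directly by characterizing which pairs $\binom{f_0}{g_0}\in\sX\times\sY$ are orthogonal to $Vh=\binom{Xh}{Yh}$ for all $h\in\sK$. Using the formula for the inner products on $\sX$ and $\sY$ from \eqref{ipe}, the orthogonality condition $(f_0,Xh)_\sX+(g_0,Yh)_\sY=0$ rewrites, via \eqref{ipcc}, as $(f_0,h)_\sK+(g_0,h)_\sK=0$ for all $h\in\sK$, i.e.\ $f_0+g_0=0$ in $\sK$. So $g_0=-f_0$ with $f_0\in\sX$ and $-f_0\in\sY$, meaning $f_0\in\sX\cap(-\sY)=\sX\cap\sY=\sL$. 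By Lemma \ref{overlapp} (and Lemma \ref{Klemma}) we have $\sL=\ran X^\half Y^\half=\ran X^\half\cap\ran Y^\half$, and a natural parametrization is $f_0=X^\half Y^\half k$ for $k\in\cran X^\half Y^\half=\cran XY$ (the last equality from \eqref{K0}). This yields exactly the right-hand side of \eqref{vstar0++}, after checking that $-X^\half Y^\half k=-Y^\half X^\half k$ does indeed lie in $\sY$, which follows from $X^\half Y^\half=Y^\half X^\half$ and $\ran Y^\half X^\half\subset\ran Y^\half$.

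Next I would deduce the surjectivity criterion. $V$ is surjective iff $(\ran V)^\perp=\{0\}$ (recall $\ran V$ is closed by the remark after \eqref{pyth1}), and by \eqref{vstar0++} this happens iff $X^\half Y^\half k=0$ for all $k\in\cran XY$, i.e.\ iff $X^\half Y^\half=0$ on $\cran XY$. But $X^\half Y^\half$ annihilates $(\cran XY)^\perp=\ker(X^\half Y^\half)^*=\ker Y^\half X^\half=\ker XY$ anyway (using $\ker X^\half Y^\half=\ker XY$, which one gets from Lemma \ref{Klemma}), so $(\ran V)^\perp=\{0\}$ is equivalent to $X^\half Y^\half=0$, hence to $\ran X^\half\cap\ran Y^\half=\{0\}$ by \eqref{K0}. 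By the final assertion of Lemma \ref{Klemma}, this is equivalent to $X$ and $Y$ being orthogonal projections.

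The routine points to be careful about are: justifying the passage from the $\sX,\sY$-inner products to the $\sK$-inner product uniformly in $h$ (this is just \eqref{ipcc} applied with $\varphi\in\cran X^\half$, $\psi=h$, noting $Xh\in\ran X$ so the density/identity applies cleanly); verifying that the displayed vector genuinely belongs to $\sX\times\sY$ as a subspace of the product of the \emph{operator range} Hilbert spaces, not merely to $\sK\times\sK$; and confirming the parametrization by $\cran XY$ is a bijection onto $\sL\oplus(-\sL)\cap\ran V^\perp$, i.e.\ that $k\mapsto X^\half Y^\half k$ restricted to $\cran XY$ is injective (true since $\ker X^\half Y^\half=\ker XY=(\cran XY)^\perp$).

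I do not anticipate a serious obstacle here: the statement is essentially a bookkeeping consequence of Lemmas \ref{Klemma} and \ref{overlapp} together with the definition \eqref{ipe} of the operator range inner product. The one place demanding mild care is the surjectivity half, where one must not confuse $(\ran V)^\perp=\{0\}$ with "$X^\half Y^\half k=0$ for the specific $k$'s that arise'' versus "$X^\half Y^\half=0$''; the reconciliation is exactly the observation that $X^\half Y^\half$ already kills $\ker XY$, so its vanishing on the complementary subspace $\cran XY$ forces it to vanish identically. Everything else is a direct substitution.
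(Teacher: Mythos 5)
Your proof is correct and follows essentially the same route as the paper: the paper identifies $(\ran V)^\perp=\ker V^*$ and uses the formula $V^*\binom{f}{g}=f+g$ from Proposition \ref{DrRoN} (whose proof is exactly your inner-product computation via \eqref{ipcc}), then invokes Lemma \ref{overlapp} and Lemma \ref{Klemma} for the parametrization by $\cran XY$ and the surjectivity criterion. Your only deviation is re-deriving the adjoint computation inline and spelling out why vanishing of $X^\half Y^\half$ on $\cran XY$ forces $X^\half Y^\half=0$, which the paper handles by noting directly that surjectivity is equivalent to $\sL=\{0\}$.
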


\begin{proof}
It is clear that $(\ran V)^\perp=\ker V^*$ and \eqref{vstar0} shows that
 \begin{equation}\label{vstar000}
\begin{split}
   \ker V^*
  &=\left\{ \begin{pmatrix} \varphi \\ -\varphi \end{pmatrix} :\, \varphi \in \sL \right\}. \\
  \end{split}
\end{equation}
Now apply Lemma \ref{overlapp} to obtain the assertion \eqref{vstar0++}.
In particular, \eqref{vstar000}
and the isometric property of $V$, cf. \eqref{pyth1}, show that $V$ is surjective if and only if $\sL=\{0\}$.
The conclusion follows from Lemma \ref{overlapp}.
\end{proof}

Recall that $X$ and $Y$ act as nonnegative contractions in $\sX$ and $\sY$, respectively.
 The next corollary presents the orthogonal projection $VV^*$ as a common
dilation in the Hilbert space $\sX \times \sY$ for this pair of nonnegative contractions; cf. \cite{RN}.

\begin{corollary}\label{vpro}
 The orthogonal projection $VV^*$ onto $\ran V$
is given by
\[
 VV^*\begin{pmatrix} f \\ g \end{pmatrix}
  =\begin{pmatrix} X & X  \\Y  &Y \end{pmatrix} \begin{pmatrix} f \\ g \end{pmatrix},
 \quad f \in \sX, \quad g \in \sY.
\]
\end{corollary}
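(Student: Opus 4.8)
The plan is to use the fact that $V$ is an isometry, which forces $VV^*$ to be the orthogonal projection onto the (closed) range $\ran V$, and then to read off the formula for $VV^*$ directly from the explicit descriptions \eqref{vstar00} of $V$ and \eqref{vstar0} of $V^*$.

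First I would record the general principle: since $V$ is isometric by \eqref{pyth1}, one has $V^*V=I_\sK$, so that $VV^*=V(V^*V)V^*$ is idempotent, and it is plainly self-adjoint; hence $VV^*$ is an orthogonal projection. Its range is $\ran V$, because $\ran VV^*\subset\ran V$ trivially, while conversely $Vh=V(V^*V)h=(VV^*)(Vh)\in\ran VV^*$ for every $h\in\sK$. (Alternatively, one could just invoke that $V^*$ is the partial isometry with initial space $\ran V$, as already established in Proposition~\ref{DrRoN}, together with the fact that $\ran V$ is closed.)

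Next I would do the one-line computation. Fix $f\in\sX$ and $g\in\sY$; via the inclusions $\sX=\ran X^\half\subset\sK$ and $\sY=\ran Y^\half\subset\sK$ these are elements of $\sK$. By \eqref{vstar0}, $V^*\binom{f}{g}=f+g\in\sK$, and then \eqref{vstar00} gives $VV^*\binom{f}{g}=V(f+g)=\binom{X(f+g)}{Y(f+g)}$. Expanding by linearity of $X$ and $Y$ on $\sK$ produces first component $Xf+Xg$ and second component $Yf+Yg$, which is exactly the action of $\begin{pmatrix}X&X\\Y&Y\end{pmatrix}$ on $\binom{f}{g}$.

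The computation carries no real difficulty; the only thing needing a moment's care is the bookkeeping across the three spaces $\sX$, $\sY$, $\sK$, which carry different inner products. One must check that the output again lies in $\sX\times\sY$: this holds because $Xf,Xg\in\ran X\subset\ran X^\half=\sX$ and $Yf,Yg\in\ran Y\subset\ran Y^\half=\sY$. Accordingly, in the block matrix the two entries in the first row are to be read as the maps into $\sX$ induced by $X$ acting on $\sK$ (restricted to $\sX$, resp.\ to $\sY$), and the two entries in the second row as the maps into $\sY$ induced by $Y$; boundedness of all four entries is then automatic, since $VV^*$ is a bounded operator.
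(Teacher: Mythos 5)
Your proposal is correct and follows essentially the same route the paper intends: the corollary is obtained by composing the explicit formulas \eqref{vstar00} for $V$ and \eqref{vstar0} for $V^*$, the fact that $VV^*$ is the orthogonal projection onto $\ran V$ being immediate from $V$ being an isometry (Proposition \ref{DrRoN}) with closed range. Your extra bookkeeping that $X(f+g)\in\ran X\subset\sX$ and $Y(f+g)\in\ran Y\subset\sY$ is a correct and welcome detail, but nothing in your argument departs from the paper's implicit proof.
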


The terminology in the following definition will be used in the rest of this paper.

\begin{definition}\label{ortha}
Let $\sX$ and $\sY$ be linear subspaces of the Hilbert space $\sK$.
Then $\sK$ is said to have
a \textit{pseudo-orthogonal decomposition}
$\sK=\sX+\sY$ if
 \begin{enumerate}\def\labelenumi {\rm (\alph{enumi})}
\item $\sX$ and $\sY$ are contractive operator range spaces
that are contractively contained in $\sK$;
\item the corresponding nonnegative contractions $X$ and $Y$  satisfy $X+Y=I$.
\end{enumerate}
\end{definition}

Recall that the condition $X+Y=I$ is equivalent to the condition
\begin{equation*}
 \|h \|_\sK^2=\|Xh\|_\sX^2+\|Yh\|_\sY^2, \quad h \in \sK,
\end{equation*}
cf. \eqref{pyth1} and Theorem \ref{DrRoN}. Moreover,
if $X$ and $Y$ in Definition \ref{ortha}
are orthogonal projections, then the definition reduces
to the usual orthogonal decomposition as the contractive operator range spaces $\sX$
and $\sY$ are closed linear subspaces of $\sK$; cf. Lemma \ref{Klemma} and \eqref{iiippp}.

\medskip

At the end of the section a closely related situation will be reviewed
for nonnegative contractions $X, Y \in \bB(\sK)$ which satisfy $X+Y=I$.
Provide the closed linear subspaces $\sK_1=\cran X$ and $\sK_2=\cran Y$
with the inner product inherited from $\sK$. It is clear that the Hilbert space $\sK$
has the decomposition
\begin{equation}\label{fww}
\sK=\sK_1+\sK_2.
\end{equation}
The intersection $\sK_1 \cap \sK_2$ is called the \textit{overlapping space}
of the Hilbert spaces $\sK_1$ and $\sK_2$ with respect to the decomposition \eqref{fww}.
It is characterized by
\[
\sK_1 \cap \sK_2=\cran X \cap \cran Y=\cran XY;
\]
see Lemma \ref{Klemma}.

\medskip

Note that the column operator  $W=\col (X^\half,Y^\half)$, defined by
\begin{equation}\label{wstar00}
Wh:=\left\{ \left\{ h, \begin{pmatrix} X^\half h \\Y^\half h\end{pmatrix} \right\} :\, h \in \sK \right\},
\end{equation}
is a closed isometric mapping from $\sK$ to $\sK_1 \times \sK_2$.
The mapping $W$ in \eqref{wstar00} is closely related to the mapping $V$ in \eqref{vstar00}.
To see this, first observe that the operator matrix
\begin{equation}\label{uu}
 U=\begin{pmatrix} X^\half & 0 \\ 0 & Y^\half \end{pmatrix}:\,
 \begin{pmatrix} \sK_1 \\ \sK_2 \end{pmatrix} \to  \begin{pmatrix} \sX \\ \sY \end{pmatrix}
\end{equation}
between the indicated Hilbert spaces is a unitary mapping; compare this with the  property
\eqref{ipee} of the operator $A \in \bB(\sK)$ in \eqref{ipe}.
Next observe that $U$ connects the operators $W$ and $V$  via
\begin{equation}\label{uwv}
UW=V.
\end{equation}
Hence the following result is a consequence of Proposition \ref{DrRoN}.

\begin{proposition}\label{DrRo1}
Let $\sK$ be a Hilbert space and let $X, Y \in \bB(\sK)$
be nonnegative contractions with $X+Y=I$.
Let the column operator $W$ be given by \eqref{wstar00}.
 Then the adjoint mapping $W^*$ from $\sK_1 \times \sK_2$ to $\sK$ is
a partial isometry, given by
\begin{equation}\label{wstar0}
 W^* \begin{pmatrix} f \\ g \end{pmatrix} =X^\half f+Y^\half g,
 \quad f \in \sK_1, \, g \in \sK_2.
\end{equation}
Consequently, for all $f \in \sK_1$ and $g \in \sK_2$,  there is the inequality
\[
\|X^\half f+Y^\half g\|_\sK^2 \leq \|f\|^2_\sK+\|g\|_\sK^2,
\]
 with equality in \eqref{wstar0} if and only if $f=X^\half h$ and $g=Y^\half h$
 for some $h \in \sK$.
\end{proposition}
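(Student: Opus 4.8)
The plan is to obtain the whole statement as a formal consequence of Proposition \ref{DrRoN}, using the unitary operator $U$ of \eqref{uu} as an intertwiner. The starting point is the identity \eqref{uwv}, $UW=V$. Since $U$ is unitary (as noted after \eqref{uu}, this is just \eqref{ipee} applied separately to the contractions $X$ and $Y$, so that $X^\half$ maps $\sK_1=\cran X$ unitarily onto $\sX$ and $Y^\half$ maps $\sK_2=\cran Y$ unitarily onto $\sY$), taking adjoints in $UW=V$ gives $V^*=W^*U^*$, and hence $W^*=V^*U$. As $V^*$ is a partial isometry by Proposition \ref{DrRoN} and $U$ is unitary, the composition $W^*=V^*U$ is again a partial isometry; this is the first assertion.

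Next I would compute the action of $W^*$ explicitly. For $f\in\sK_1$ and $g\in\sK_2$ one has $U\binom{f}{g}=\binom{X^\half f}{Y^\half g}$, and this vector lies in $\sX\times\sY$ because $X^\half f\in\ran X^\half=\sX$ and $Y^\half g\in\ran Y^\half=\sY$. Applying the formula \eqref{vstar0} for $V^*$ to it yields
\[
 W^*\begin{pmatrix} f \\ g \end{pmatrix}
 = V^*\begin{pmatrix} X^\half f \\ Y^\half g \end{pmatrix}
 = X^\half f + Y^\half g,
\]
which is \eqref{wstar0}. Since a partial isometry is contractive and $U$ is isometric, this already gives
\[
 \|X^\half f+Y^\half g\|_\sK^2
 = \left\| W^*\begin{pmatrix} f \\ g \end{pmatrix} \right\|_\sK^2
 \le \left\| \begin{pmatrix} f \\ g \end{pmatrix} \right\|_{\sK_1\times\sK_2}^2
 = \|f\|_\sK^2+\|g\|_\sK^2 .
\]

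For the equality case I would use that $W$ is a closed isometry, as recorded just after \eqref{wstar00}; hence $\ran W$ is closed and $\ker W^*=(\ran W)^\perp$. Decomposing $\binom{f}{g}$ as the orthogonal sum $WW^*\binom{f}{g}+(I-WW^*)\binom{f}{g}$ of its components in $\ran W$ and in $\ker W^*$, and using $W^*W=I$, one sees that $W^*$ preserves the norm of $\binom{f}{g}$ exactly when $(I-WW^*)\binom{f}{g}=0$, i.e. when $\binom{f}{g}\in\ran W$. By \eqref{wstar00} this means $\binom{f}{g}=Wh=\binom{X^\half h}{Y^\half h}$ for some $h\in\sK$, that is, $f=X^\half h$ and $g=Y^\half h$. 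Alternatively, one can transport the equality condition of Proposition \ref{DrRoN} back through $U$: there $f'=X^\half f$ and $g'=Y^\half g$ must satisfy $f'=Xk$ and $g'=Yk$ for some $k$, and the square roots are then removed using $\sK_1=(\ker X^\half)^\perp$ and $\sK_2=(\ker Y^\half)^\perp$.

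There is no serious obstacle here: once the intertwining $UW=V$ with $U$ unitary is in place, the result is a routine corollary of Proposition \ref{DrRoN}. The only points demanding a little care are the verification that $U$ really is unitary between the indicated Hilbert spaces -- which is precisely the unitarity \eqref{ipee} applied to $X$ and to $Y$ -- and the reading of ``equality in \eqref{wstar0}'' as equality in the displayed inequality immediately above it rather than in the operator identity \eqref{wstar0} itself.
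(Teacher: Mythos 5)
Your proposal is correct and follows essentially the same route as the paper, which obtains Proposition \ref{DrRo1} precisely as a consequence of Proposition \ref{DrRoN} via the unitary $U$ of \eqref{uu} and the intertwining relation $UW=V$ of \eqref{uwv}. Your explicit treatment of the equality case (via $\ran W$ being closed, or by transporting the equality condition of Proposition \ref{DrRoN} back through $U$) and your reading of ``equality in \eqref{wstar0}'' as referring to the displayed inequality are both exactly what the paper intends.
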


Furthermore, the isometry $W$ is not onto in general and the intersection of
$\cran X$ and $\cran Y$ comes into play.

\begin{proposition} \label{ranw}
The isometry $W$ satisfies
\begin{equation}\label{wstar0+}
(\ran W)^\perp=\ker W^*=
 \left\{\begin{pmatrix}
   -Y^{\half}k \\
  X^{\half}k
 \end{pmatrix}:\,
 k\in\cran XY \right\}.
\end{equation}
Moreover, $W$ is surjetive if and only if $X$ and $Y$ are orthogonal projections.
\end{proposition}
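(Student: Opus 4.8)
The plan is to deduce this proposition from Proposition~\ref{ranv} by transporting everything through the unitary operator $U$ of \eqref{uu}. Since $W$ is a closed isometry its range is closed, so $(\ran W)^\perp=\ker W^*$, and it suffices to identify this space. I would start from the relation $UW=V$ in \eqref{uwv}: because $U$ is unitary one has $W=U^{-1}V$, hence $\ran W=U^{-1}(\ran V)$, and since $U$ preserves orthogonal complements this gives
\[
 (\ran W)^\perp = U^{-1}\big( (\ran V)^\perp \big) = U^*\big( (\ran V)^\perp \big).
\]

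Next I would insert the explicit description of $(\ran V)^\perp$ from Proposition~\ref{ranv}, namely that it is spanned by the vectors $\binom{-X^{\half}Y^{\half}k}{X^{\half}Y^{\half}k}$ with $k\in\cran XY$, and compute $U^*$ on each of them. Here $U^*=U^{-1}$ acts on $\sX\times\sY$ by undoing the unitary identifications $X^{\half}\colon\cran X^{\half}\to\sX$ and $Y^{\half}\colon\cran Y^{\half}\to\sY$ of \eqref{ipee}. The point is that for $k\in\cran XY=\cran X^{\half}Y^{\half}$ (note $XY=(X^{\half}Y^{\half})^2$) one already knows, as observed in the proof of Lemma~\ref{overlapp}, that $Y^{\half}k\in\cran X^{\half}$ and $X^{\half}k\in\cran Y^{\half}$. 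Hence $X^{\half}(-Y^{\half}k)=-X^{\half}Y^{\half}k$ and $Y^{\half}(X^{\half}k)=X^{\half}Y^{\half}k$, so that
\[
 U^* \begin{pmatrix} -X^\half Y^\half k \\ X^\half Y^\half k \end{pmatrix}
 = \begin{pmatrix} -Y^\half k \\ X^\half k \end{pmatrix}.
\]
Running over all $k\in\cran XY$ then yields \eqref{wstar0+}; in particular these vectors genuinely lie in $\sK_1\times\sK_2=\cran X\times\cran Y$, which is exactly what the two inclusions above guarantee.

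For the surjectivity statement I would argue, again using that $U$ is unitary and $W=U^{-1}V$, that $W$ is onto if and only if $V$ is onto, and then quote Proposition~\ref{ranv}. Equivalently it can be read off \eqref{wstar0+}: since $\cran XY\subset\cran X=\cran X^{\half}$, the operator $X^{\half}$ is injective on $\cran XY$, so $\binom{-Y^{\half}k}{X^{\half}k}=0$ forces $k=0$; thus $\ker W^*=\{0\}$ precisely when $\cran XY=\{0\}$, which by Lemma~\ref{Klemma} is equivalent to $X$ and $Y$ being orthogonal projections.

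I do not expect a serious obstacle here, since all the analytic content --- the identities in \eqref{K0}, the unitarity of $X^{\half}$ on $\cran X^{\half}$, and the inclusions $Y^{\half}\cran XY\subset\cran X^{\half}$ and $X^{\half}\cran XY\subset\cran Y^{\half}$ --- is already in place from the earlier results, and the map $U$ was set up precisely for this transfer. The one thing to be careful about is the bookkeeping of which inner product is in force at each step, in particular checking that the vectors produced by $U^*$ really land in $\sK_1\times\sK_2$ rather than merely in $\sX\times\sY$. A self-contained alternative avoiding $U$ would start from the formula $W^*\binom{f}{g}=X^{\half}f+Y^{\half}g$ of Proposition~\ref{DrRo1} and solve $X^{\half}f+Y^{\half}g=0$ with $f\in\cran X^{\half}$, $g\in\cran Y^{\half}$ directly: the inclusion $\supseteq$ in \eqref{wstar0+} is immediate, while $\subseteq$ uses $\ran X^{\half}\cap\ran Y^{\half}=\ran X^{\half}Y^{\half}$ from \eqref{K0} together with the injectivity of $X^{\half}$ on $\cran X^{\half}$.
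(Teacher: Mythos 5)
Your proposal is correct and follows essentially the same route as the paper: transferring Proposition~\ref{ranv} through the unitary $U$ of \eqref{uu} via $UW=V$, with the computation $U\binom{-Y^{\half}k}{X^{\half}k}=\binom{-X^{\half}Y^{\half}k}{X^{\half}Y^{\half}k}$ for $k\in\cran XY$ being exactly the identification the paper invokes. The extra checks you include (that $-Y^{\half}k\in\cran X$, $X^{\half}k\in\cran Y$, and the direct reading of surjectivity from \eqref{wstar0+} via Lemma~\ref{Klemma}) are sound but not a different method.
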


\begin{proof}
The operator $U$ in \eqref{uu} maps $\ker W^*$
in \eqref{wstar0+} onto $\ker V^*$ in \eqref{vstar0++}.
Hence the assertion \eqref{wstar0+} follows from Proposition \ref{ranv}.
The characterization of surjectivity follows from \eqref{uwv}
and Proposition \ref{ranv}.
\end{proof}

Note that $X$ and $Y$ act as nonnegative contractions in $\sK_1$ and $\sK_2$, respectively.
The following corollary presents the orthogonal projection  $WW^*$ as a common
dilation in the Hilbert space $\sK_1 \times \sK_2$ for this pair of nonnegative contractions; cf. \cite{RN}.
It can be seen as a consequence of Corollary \ref{vpro}, since $WW^*=U^*VV^*U$.

\begin{corollary}\label{ww}
The orthogonal projection $WW^*$ onto $\ran W$
is given by
\[
 WW^*\begin{pmatrix} f \\ g \end{pmatrix}
  =\begin{pmatrix} X & X^\half Y^\half  \\Y^\half X^\half  &Y \end{pmatrix}
  \begin{pmatrix} f \\ g \end{pmatrix}, \quad f \in \sK_1, \, g \in \sK_2.
\]
 \end{corollary}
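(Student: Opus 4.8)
The plan is to observe that $WW^*$, being the composition of the closed isometry $W$ with closed range (as noted after \eqref{wstar00}) and its adjoint, is the orthogonal projection of $\sK_1\times\sK_2$ onto $\ran W$, so it remains only to compute it explicitly. Two routes are available. The first follows the hint preceding the statement: from $UW=V$ in \eqref{uwv} and the unitarity of $U$ in \eqref{uu} one gets $W=U^*V$ and $W^*=V^*U$, hence $WW^*=U^*(VV^*)U$, where $VV^*$ is already determined by Corollary \ref{vpro}.

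Carrying out this route, I would apply $U$ to $\binom{f}{g}$ with $f\in\sK_1$, $g\in\sK_2$ to obtain $\binom{X^\half f}{Y^\half g}$, then use Corollary \ref{vpro} to compute $VV^*\binom{X^\half f}{Y^\half g}=\binom{XX^\half f+XY^\half g}{YX^\half f+YY^\half g}$, and finally apply $U^*$, the componentwise inverse of the unitary identifications of type \eqref{ipee}; using $XX^\half=X^\half X$ and $YY^\half=Y^\half Y$ this collapses the first component to $Xf+X^\half Y^\half g$ and the second to $Y^\half X^\half f+Yg$, which is the asserted operator matrix. Alternatively, and more directly, by Proposition \ref{DrRo1} one has $W^*\binom{f}{g}=X^\half f+Y^\half g\in\sK$ for $f\in\sK_1$ and $g\in\sK_2$, so by the definition \eqref{wstar00} of $W$,
\[
WW^*\begin{pmatrix} f\\ g\end{pmatrix}=W(X^\half f+Y^\half g)=\begin{pmatrix} X^\half(X^\half f+Y^\half g)\\ Y^\half(X^\half f+Y^\half g)\end{pmatrix}=\begin{pmatrix} Xf+X^\half Y^\half g\\ Y^\half X^\half f+Yg\end{pmatrix},
\]
where $X^\half X^\half=X$, $Y^\half Y^\half=Y$, and the commutativity of $X^\half$ and $Y^\half$ are used. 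This is precisely the claimed formula.

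I do not expect any genuine obstacle here; the only point requiring care is the bookkeeping of ambient spaces. One should keep in mind that the $X^\half$, $Y^\half$ occurring in $W^*$ map $\sK_1$, $\sK_2$ into $\sK$, that those occurring in $W$ map $\sK$ into $\sK_1$, $\sK_2$, and that their compositions are the bounded operators $X$, $Y$ on $\sK$; and that the resulting matrix indeed maps $\sK_1\times\sK_2$ into itself — for instance $X^\half Y^\half g\in\ran X^\half\subset\cran X=\sK_1$ — consistent with $WW^*$ being a projection onto $\ran W\subset\sK_1\times\sK_2$.
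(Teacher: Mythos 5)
Your proof is correct and, in its first route, is exactly the paper's argument: the corollary is obtained from $WW^*=U^*VV^*U$ together with Corollary \ref{vpro}, and your bookkeeping of the unitary identifications in $U$ is accurate. Your second, direct route—composing $W$ with the formula for $W^*$ from Proposition \ref{DrRo1} and using $X^\half X^\half=X$, $Y^\half Y^\half=Y$—is an equally valid shortcut, but it is only a minor variation since Proposition \ref{DrRo1} was itself derived via $U$.
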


The model involving $\sK_1=\cran X$ and $\sK_2=\cran Y$ is connected
to the de Branges-Rovnyak model involving $\sX=\ran X^\half$ and $\sY=\ran Y^\half$
via the unitary mapping \eqref{uu}.
The present model and the mapping $W$ in \eqref{wstar00} and its properties
will play a role in the Lebesgue type decompositions of a single
semibounded form \cite{HS2022c}.

\section{Pseudo-orthogonal decompositions}\label{overlap}

In this section one can find a brief introduction to sum decompositions of linear
operators or relations from a Hilbert space $\sH$
to a Hilbert space $\sK$ with respect to a so called pseudo-orthogonal
decomposition of $\sK$. First some preliminary properties about
sums of relations are discussed.

\medskip

Let $T_1$ and $T_2$ belong to $\bLR(\sH, \sK)$.
The sum $T_{1}+T_{2} \in \bLR(\sH,\sK)$
is defined by
\begin{equation}\label{aaa0}
 T_1+T_2=\big\{ \{f,f'+f''\} :\, \{f,f'\} \in T_1, \{f,f''\} \in T_2 \big\}.
\end{equation}
With the sum $T=T_{1}+T_{2}$ it is clear that for the domains one has
\[
\dom T=\dom T_{1} \cap \dom T_{2},
\]
while it is straightforward to check for the ranges that there is an inclusion
\[
  \ran T \subset \ran T_{1} + \ran T_{2}.
\]
However, for the multivalued parts there is equality
 \begin{equation}\label{rm1+}
 \mul T=\mul T_{1} + \mul T_{2},
\end{equation}
so that $\mul T_1 \subset \mul T$ and $\mul T_2 \subset \mul T$.

\begin{definition}\label{ssuumm}
The sum in \eqref{aaa0} is said to be \textit{strict} if the sum in \eqref{rm1+} is direct, i.e.,
\[
\mul T_1 \cap \mul T_2=\{0\}.
\]
In other words the sum in \eqref{aaa0} is strict precisely
when the elements $f'$ and $f''$ in \eqref{aaa0} are uniquely determined by the sum $f'+f''$.
\end{definition}

In particular, the sum $T=T_1+T_2$ is strict if either $T_1$ or $T_2$ is an operator.
A variation on the theme of sums is given in the following lemma.

\begin{lemma}\label{opnieuw}
Let $T$, $T_1$, and $T_2$ belong to $\bLR(\sH,\sK)$.
Assume the domain equality
$\dom T=\dom T_1=\dom T_2$ and
the inclusion
\begin{equation}\label{grey1}
 T \subset T_1+T_2.
\end{equation}
Then there is equality $T=T_1+T_2$ in \eqref{grey1} if and only if
\begin{equation}\label{grey2}
\mul T=\mul T_1+\mul T_2.
\end{equation}
Consequently, there is equality in \eqref{grey1} if and only if
\[
\mul T_1 \subset \mul T \quad \mbox{and} \quad \mul T_2 \subset \mul T.
\]
 \end{lemma}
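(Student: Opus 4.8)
The plan is to prove the equivalence by analysing what the inclusion $T\subset T_1+T_2$ together with the domain equality $\dom T=\dom T_1=\dom T_2$ forces on the multivalued parts, and then showing the reverse. First I would observe that since $\{0,g\}\in T$ means exactly $g\in\mul T$, the inclusion \eqref{grey1} immediately gives $\mul T\subset\mul(T_1+T_2)=\mul T_1+\mul T_2$, using the general identity \eqref{rm1+}. So one direction of \eqref{grey2} is automatic from the hypotheses alone; the content of the equivalence is whether $\mul T_1+\mul T_2\subset\mul T$, and this is precisely what I will relate to equality in \eqref{grey1}.

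For the ``only if'' direction, assume $T=T_1+T_2$. Then $\mul T=\mul(T_1+T_2)=\mul T_1+\mul T_2$ by \eqref{rm1+}, which is \eqref{grey2}. For the ``if'' direction, assume \eqref{grey2}, i.e. $\mul T=\mul T_1+\mul T_2$. I already have $T\subset T_1+T_2$, so I only need the reverse inclusion. Take $\{f,g\}\in T_1+T_2$; then $f\in\dom T_1\cap\dom T_2=\dom T$ by the domain equality, so there is some $g_0$ with $\{f,g_0\}\in T$. By \eqref{grey1} applied to this element, $\{f,g_0\}\in T_1+T_2$ as well. Subtracting, $\{0,g-g_0\}\in T_1+T_2$, i.e. $g-g_0\in\mul T_1+\mul T_2=\mul T$ by assumption \eqref{grey2}. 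Hence $\{0,g-g_0\}\in T$, and adding back $\{f,g_0\}\in T$ gives $\{f,g\}\in T$. This proves $T_1+T_2\subset T$, hence equality in \eqref{grey1}.

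For the final ``consequently'' clause I would just note that, by \eqref{rm1+}, the condition $\mul T=\mul T_1+\mul T_2$ in \eqref{grey2} is equivalent to the pair of inclusions $\mul T_1\subset\mul T$ and $\mul T_2\subset\mul T$: indeed $\mul T_1+\mul T_2\subset\mul T$ holds iff both summands lie in $\mul T$, while the reverse inclusion $\mul T\subset\mul T_1+\mul T_2$ is, as noted above, automatic under the standing hypothesis \eqref{grey1} (it is a special case of the general inclusion $\mul T\subset\mul(T_1+T_2)$ coming from \eqref{grey1} and \eqref{rm1+}). Combining with the equivalence just established yields the stated consequence.

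The only mild subtlety — and the one place to be careful rather than a genuine obstacle — is to make sure the domain hypothesis $\dom T=\dom T_1=\dom T_2$ is actually used in the right spot: it is exactly what guarantees that every $f$ appearing in an element of $T_1+T_2$ also appears in some element of $T$, so that the ``subtract and compare multivalued parts'' trick applies. Without it, $T_1+T_2$ could have elements over points of the domain not met by $T$, and equality would fail even when \eqref{grey2} holds. Everything else is routine manipulation of graphs in $\sH\times\sK$ using only \eqref{aaa0} and \eqref{rm1+}.
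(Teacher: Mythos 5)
Your proof is correct and follows essentially the same route as the paper: both reduce the question of equality in \eqref{grey1} to the comparison of the multivalued parts, using \eqref{rm1+} and the fact that the domain equality forces $\dom T=\dom(T_1+T_2)$. The only difference is that the paper simply invokes the Arens observation (\cite[Corollary~1.1.3]{BHS}) that a relation contained in another with the same domain and containing its multivalued part must equal it, whereas you prove exactly that fact inline via the ``subtract and compare multivalued parts'' argument, which is a valid self-contained substitute.
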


\begin{proof}
By assumption one has $\dom T=\dom (T_1+T_2)$
and it follows from the inclusion \eqref{grey1} that $\mul T \subset \mul (T_1+T_2)$.
Hence, by an observation that goes back to Arens (see \cite[Corollary 1.1.3]{BHS}),
there is equality   $T=T_1+T_2$ if and only if
$\mul (T_1+T_2) \subset \mul T$, i.e., \eqref{grey2} holds.
 \end{proof}

The next corollary illustrates a situation that will be
of interest in the rest of the paper; cf. \cite{HSS2018}.

\begin{corollary}\label{orth+}
Let $T \in \bLR(\sH, \sK)$
and let $X,Y \in \bB(\sK)$ be nonnegative contractions such that $X+Y=I$.
Then
$\dom T=\dom XT=\dom YT$ and, in addition,
 \begin{equation}\label{inclll}
 T \subset XT + YT.
\end{equation}
There is equality $T = XT + YT$ in \eqref{inclll} if and only if
\[
\mul T=X \,\mul T + Y \,\mul T.
\]
Consequently, there is equality  in \eqref{inclll} if and only if
\begin{equation}\label{zwijn0}
X \,\mul T \subset \mul T \quad \mbox{or, equivalently,}
\quad Y \,\mul T \subset \mul T.
\end{equation}
Moreover, in this case
\begin{equation}\label{zwijn}
X \,\mul T \,\cap \,Y\,\mul T=XY\, \mul T;
\end{equation}
thus the sum $T = XT + YT$ is strict in the sense of Definition {\rm \ref{ssuumm}}
if and only if $\mul T \subset \ker XY$.
\end{corollary}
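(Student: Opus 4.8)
The plan is to reduce the equality assertions to Lemma~\ref{opnieuw} applied with $T_1=XT$ and $T_2=YT$, and then to obtain \eqref{zwijn} by transplanting the proof of the first identity in \eqref{K0} from all of $\sK$ to the (possibly non-closed) subspace $\mul T$. First I would record the routine facts about product relations. Since $X,Y\in\bB(\sK)$ are everywhere defined, $\dom XT=\dom YT=\dom T$; and for $\{f,g\}\in T$ one has $\{f,Xg\}\in XT$ and $\{f,Yg\}\in YT$, hence $\{f,Xg+Yg\}=\{f,(X+Y)g\}=\{f,g\}\in XT+YT$, which is \eqref{inclll}. Likewise $\mul XT=X\,\mul T$ and $\mul YT=Y\,\mul T$. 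Thus the hypotheses of Lemma~\ref{opnieuw} hold for $T\subset XT+YT$, and that lemma gives $T=XT+YT$ if and only if $\mul T=\mul XT+\mul YT=X\,\mul T+Y\,\mul T$, which is the first equivalence.

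Next I would pass to the invariance condition \eqref{zwijn0}. By \eqref{rm1+} (or simply because $g=Xg+Yg$ for $g\in\mul T$) one always has $\mul T\subset X\,\mul T+Y\,\mul T$, so the equality $\mul T=X\,\mul T+Y\,\mul T$ is equivalent to $X\,\mul T+Y\,\mul T\subset\mul T$. If $X\,\mul T\subset\mul T$, then, $\mul T$ being a linear subspace and $Y=I-X$, also $Y\,\mul T\subset\mul T$, so $X\,\mul T+Y\,\mul T\subset\mul T$; the converse is trivial, and the roles of $X$ and $Y$ are symmetric. This proves \eqref{zwijn0}.

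For \eqref{zwijn}, set $\sM=\mul T$ and assume, as we now may, that $X\sM\subset\sM$ and $Y\sM\subset\sM$. Then $XY\sM=X(Y\sM)\subset X\sM$ and, by commutativity, $XY\sM=Y(X\sM)\subset Y\sM$, so $XY\sM\subset X\sM\cap Y\sM$. Conversely, if $g\in X\sM\cap Y\sM$, write $g=Xh=Yk$ with $h,k\in\sM$; then $Y(h+k)=Yh+Yk=Yh+Xh=(X+Y)h=h$, so $h=Y(h+k)$ with $h+k\in\sM$, whence $g=Xh=XY(h+k)\in XY\sM$. Finally, by Definition~\ref{ssuumm} the sum $T=XT+YT$ is strict if and only if $\mul XT\cap\mul YT=X\,\mul T\cap Y\,\mul T=\{0\}$, and by \eqref{zwijn} this set equals $XY\,\mul T$; hence the sum is strict exactly when $\mul T\subset\ker XY$.

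The only step that is not pure bookkeeping is the inclusion $X\sM\cap Y\sM\subset XY\sM$ in \eqref{zwijn}: one must recognise the identity $h=Y(h+k)$ forced by $X+Y=I$ — precisely the trick used in the proof of Lemma~\ref{Klemma} — and observe that it is the invariance $Y\sM\subset\sM$ that keeps $h+k$ inside $\sM$, so that the argument survives the passage from $\sK$ to $\mul T$. Everything else is elementary manipulation of domains, ranges, and multivalued parts of product relations.
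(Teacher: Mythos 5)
Your proof is correct and follows essentially the same route as the paper: the domain and sum assertions are reduced to Lemma~\ref{opnieuw} via $\mul XT = X\,\mul T$, $\mul YT = Y\,\mul T$, and the identity \eqref{zwijn} is obtained by the same $\varphi = Y(\varphi+\psi)$ trick used in Lemma~\ref{Klemma}, with the reverse inclusion $XY\,\mul T \subset X\,\mul T \cap Y\,\mul T$ coming from the invariance \eqref{zwijn0}. One small remark: in your closing paragraph the containment $h+k \in \mul T$ follows from linearity of $\mul T$ alone, not from the invariance $Y\,\mul T \subset \mul T$ (which is needed only for the other inclusion), but this does not affect the validity of the argument.
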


\begin{proof}
These assertions follow from Lemma \ref{opnieuw} except the identity \eqref{zwijn}.
To see \eqref{zwijn} let $h \in X \,\mul T \cap Y\,\mul T$, so that $h=X \varphi=Y \psi$
where $\varphi, \psi \in \mul T$. Now it follows from $(I-Y)\varphi=Y \psi$
that $\varphi =Y(\varphi+\psi)$ with $\varphi+\psi \in \mul T$. Thus $h \in XY \mul T$,
which shows that $X \,\mul T \cap Y\,\mul T \subset XY\, \mul T$.  The reverse inclusion follows
immediately from \eqref{zwijn0}.
\end{proof}

The interest in this paper is in decompositions $T=T_1+T_2$
with linear relations or operators  going from a Hilbert $\sH$ to a Hilbert space $\sK$,
which have a pseudo-orthogonal decomposition $\sK=\sX+\sY$.
Before the formal definition is given, note that
 any element  $\{f,g\} \in T$ can be written as
\[
 \{f,g\}=\{f,g_1+g_2\}, \quad \{f,g_1\} \in T_1, \quad \{f,g_2\} \in T_2, \quad g=g_1+g_2.
\]
If $\ran T_1 \subset \sX$ and $\ran T_2 \subset \sY$, then by  Proposition \ref{DrRoN}
there is the general inequality
\begin{equation}\label{pyth0}
  \|g\|_\sK^2 \leq \|g_1\|_\sX^2 + \|g_2\|_\sY^2.
\end{equation}
In the following definition a special class of such sum decompositions is introduced,
involving a Pythagorean equality in \eqref{pyth0}.

\begin{definition}\label{nonorthsum}
Let $T \in \bLR(\sH, \sK)$ and
assume that  $\sK$ has a \textit{pseudo-orthogonal decomposition}
$\sK=\sX+\sY$ with associated nonnegative contractions $X$ and $Y$
such that $X+Y=I$. Let $T_1$ and $T_2$ belong to $\bLR(\sH,\sK)$,
then the sum
\begin{equation}\label{ltdsum}
  T=T_1+T_2 \quad \mbox{with} \quad \dom T=\dom T_1=\dom T_2,
\end{equation}
is said to be a pseudo-orthogonal decomposition of $T$ connected with the
pseudo-orthogonal decomposition $\sK=\sX+\sY$ {\rm (}or,
equivalently, with the pair of nonnegative contractions
$X$ and $Y$ in $\bB(\sK)$ with $X+Y=I${\rm )} if
\begin{itemize}
\item[(a)] $\ran T_1 \subset \sX$ and $\ran T_2   \subset \sY$;
\item[(b)]  for every $\{f,g\} \in T$ with $\{f,g_1\} \in T_1$, $\{f,g_2\} \in T_2$, $g=g_1+g_2$,
one has
\[
  \|g\|_\sK^2=\|g_1\|_\sX^2 + \|g_2\|_\sY^2.
\]
\end{itemize}
\end{definition}

The definition of pseudo-orthogonal decompositions has an important consequence
for the sum \eqref{ltdsum}; cf. Definition \ref{ssuumm}.

\begin{lemma}\label{nonorthsum1}
Let $T=T_1+T_2$ in \eqref{ltdsum} be a pseudo-orthogonal decomposition.
Then the sum is strict in the sense of Definition {\rm \ref{ssuumm}}.
 \end{lemma}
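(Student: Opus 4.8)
The claim is that a pseudo-orthogonal decomposition $T=T_1+T_2$ as in \eqref{ltdsum} is automatically a strict sum, i.e.\ $\mul T_1 \cap \mul T_2 = \{0\}$. The plan is to exploit condition (b) of Definition \ref{nonorthsum} directly on elements of the multivalued parts. First I would recall that $\mul T_1 = \{\, g_1 : \{0,g_1\} \in T_1 \,\}$ and similarly for $\mul T_2$, and that by \eqref{rm1+} one has $\mul T = \mul T_1 + \mul T_2$. Since $\dom T_1 = \dom T_2 = \dom T$, we have $0 \in \dom T_i$, so any $\varphi \in \mul T_1 \cap \mul T_2$ gives $\{0,\varphi\} \in T_1$ and $\{0,\varphi\} \in T_2$; hence $\{0, \varphi + (-\varphi)\} = \{0,0\} \in T$ is represented with $g = 0$, $g_1 = \varphi \in \ran T_1 \subset \sX$, $g_2 = -\varphi \in \ran T_2 \subset \sY$.

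The key step is then to apply the Pythagorean equality (b) to this decomposition of $g = 0$: it yields
\[
 0 = \|0\|_\sK^2 = \|g_1\|_\sX^2 + \|g_2\|_\sY^2 = \|\varphi\|_\sX^2 + \|\varphi\|_\sY^2.
\]
Since both terms on the right are nonnegative, each must vanish; in particular $\|\varphi\|_\sX = 0$, and because $\sX$ is a contractive operator range space (so that $\|\varphi\|_\sK \leq \|\varphi\|_\sX$ by property (a) of a contractive operator range space), it follows that $\varphi = 0$ in $\sK$. This shows $\mul T_1 \cap \mul T_2 = \{0\}$, which is precisely strictness in the sense of Definition \ref{ssuumm}.

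One subtlety worth spelling out is the use of $0 \in \dom T_2$ to guarantee that $\{0,\varphi\}$, $\{0,-\varphi\}$, and the pair $g_1 = \varphi$, $g_2 = -\varphi$ genuinely constitute an admissible representation of an element of $T$ in the sense of (b) — this is where the hypothesis $\dom T = \dom T_1 = \dom T_2$ is essential, and it is the reason the argument does not merely reprove the trivial inequality \eqref{pyth0}. I do not anticipate any real obstacle here; the only thing to be careful about is to invoke the norm inequality of the contractive operator range space $\sX$ (or $\sY$) rather than claiming the $\sX$-norm and $\sK$-norm coincide, since in the genuinely pseudo-orthogonal (non-projection) case they do not.
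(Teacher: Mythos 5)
Your proof is correct and follows essentially the same argument as the paper: take $\varphi\in\mul T_1\cap\mul T_2$, use $\{0,\varphi\}\in T_1$, $\{0,-\varphi\}\in T_2$ as an admissible decomposition of $\{0,0\}\in T$, and apply the Pythagorean equality (b) to conclude $\|\varphi\|_\sX^2+\|-\varphi\|_\sY^2=0$, hence $\varphi=0$. The extra remarks (the appeal to \eqref{rm1+} and the claimed essential role of the domain equality, which in fact is automatic since $\{0,0\}$ lies in every linear relation) are harmless but not needed.
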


\begin{proof}
Let $\varphi \in \mul T_1\cap \mul T_2$. Then
$\{0,\varphi\}\in T_1$, $\{0,-\varphi\}\in T_2$, and for the sum
one sees that  $g=\varphi-\varphi=0$.
It follows that $\|\varphi\|_\sX^2 + \|-\varphi\|_\sY^2=0$.
This shows that $\varphi=0$.
Therefore $\mul T_1 \cap \mul T_2=\{0\}$ and the sum $T=T_1+T_2$ is strict.
\end{proof}

The pseudo-orthogonal decompositions in Definition \ref{nonorthsum}
will now be characterized by means of nonnegative contractions in $\bB(\sK)$.

\begin{theorem}\label{prelem0sum}
Let $T \in \bLR(\sH,\sK)$ be a linear relation.
Assume that $K\in\mathbf{B}(\sK)$ is a nonnegative contraction
which satisfies
\begin{equation}\label{sum3}
\mul T= (I-K) \,\mul T + K \, \mul T, \quad \mbox{direct sum},
\end{equation}
and define
\begin{equation}\label{comsum2sum}
 T_1=(I-K)T \quad \mbox{and} \quad T_2=KT.
\end{equation}
Then the sum  $T=T_1+T_2$ in \eqref{ltdsum}
is a pseudo-orthogonal decomposition of $T$,
connected with the pair $I-K$ and $K$
in the sense of Definition {\rm \ref{nonorthsum}}.

Conversely, let the sum $T=T_1+T_2$ in \eqref{ltdsum}
be a pseudo-orthogonal decomposition
of $T \in \bLR(\sH,\sK)$
in the sense of  Definition {\rm \ref{nonorthsum}}.
Then there exists a nonnegative contraction $K \in \bB(\sK)$
for which \eqref{sum3} and  \eqref{comsum2sum} are satisfied.
\end{theorem}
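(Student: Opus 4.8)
The plan is to prove the two implications separately, in the order stated.

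\medskip

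\textbf{The forward implication.} Suppose $K \in \bB(\sK)$ is a nonnegative contraction satisfying \eqref{sum3}, and set $T_1=(I-K)T$, $T_2=KT$. First I would observe that $I-K$ and $K$ are nonnegative contractions with $(I-K)+K=I$, so they generate a pseudo-orthogonal decomposition $\sK=\sX+\sY$ with $\sX=\ran(I-K)^{\half}$, $\sY=\ran K^{\half}$, as reviewed in Section \ref{compl}. For the domain equality \eqref{ltdsum} I would invoke Corollary \ref{orth+}, which gives $\dom T=\dom(I-K)T=\dom KT$, and also gives that the hypothesis \eqref{sum3} is exactly the condition under which $T=(I-K)T+KT$ holds with equality (via \eqref{zwijn0} and Lemma \ref{opnieuw}). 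So the sum \eqref{ltdsum} is valid. Condition (a) of Definition \ref{nonorthsum} is immediate: $\ran T_1=\ran(I-K)T\subset\ran(I-K)\subset\ran(I-K)^{\half}=\sX$, and similarly $\ran T_2\subset\sY$. For condition (b), take $\{f,g\}\in T$, so $g=(I-K)g_0+Kg_0$ with $g_0$ any element satisfying $\{f,g_0\}\in T$, and $g_1=(I-K)g_0$, $g_2=Kg_0$. Then by the identity \eqref{ip} computed in the operator range norms,
\[
\|g_1\|_\sX^2+\|g_2\|_\sY^2=\|(I-K)^{\half}((I-K)^{\half}g_0)\|_\sX^2+\|K^{\half}(K^{\half}g_0)\|_\sY^2=((I-K)g_0,g_0)_\sK+(Kg_0,g_0)_\sK=\|g_0\|_\sK^2,
\]
but I must be careful: the decomposition $g=g_1+g_2$ of a \emph{given} $g$ need not a priori come from a single $g_0$ when $T$ has multivalued part. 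However, because the sum is strict (here $T_1=(I-K)T$, so the pieces $g_1,g_2$ attached to a fixed $f$ are determined by $g$ only up to $\mul T_1\cap\mul T_2$, which by \eqref{sum3} is $\{0\}$), any admissible splitting $g=g_1+g_2$ does equal $((I-K)g_0,Kg_0)$ for some $g_0$ with $\{f,g_0\}\in T$; one checks this using \eqref{rm1+} and \eqref{sum3}. Then the Pythagorean equality is exactly the displayed computation, which is the equality case in Proposition \ref{DrRoN} (inequality \eqref{inek}) realized because $g_1=Xh$, $g_2=Yh$ with $h=g_0$, $X=I-K$, $Y=K$.

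\medskip

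\textbf{The converse implication.} Now suppose $T=T_1+T_2$ in \eqref{ltdsum} is a pseudo-orthogonal decomposition connected with nonnegative contractions $X,Y$, $X+Y=I$. The task is to produce a single $K\in\bB(\sK)$ with $T_1=(I-K)T$, $T_2=KT$ and \eqref{sum3}. The natural candidate is $K=Y$ restricted suitably, but the point is subtler: one must show that the given $T_1$ (a priori just \emph{some} relation with range in $\sX$) is forced to equal $(I-Y)T=XT$. My plan is to fix $\{f,g\}\in T$ with splitting $g=g_1+g_2$, $\{f,g_1\}\in T_1\subset\sH\times\sX$, $\{f,g_2\}\in T_2\subset\sH\times\sY$. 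Condition (b) says $\|g\|_\sK^2=\|g_1\|_\sX^2+\|g_2\|_\sY^2$, which by the equality case in Proposition \ref{DrRoN} forces $g_1=Xh$ and $g_2=Yh$ for some $h\in\sK$; moreover $g=V^*\binom{g_1}{g_2}=g_1+g_2=Xh+Yh=h$, so necessarily $h=g$, giving $g_1=Xg$ and $g_2=Yg$. This must hold for \emph{every} admissible splitting attached to $\{f,g\}$. Hence $T_1\subset XT$ and $T_2\subset YT$, and conversely $XT+YT\subset T_1+T_2=T$; combined with the domain equalities and \eqref{rm1+}, I would conclude $T_1=XT$ and $T_2=YT$ and that $\mul T=X\,\mul T+Y\,\mul T$ is a \emph{direct} sum (strictness, Lemma \ref{nonorthsum1}), which is precisely \eqref{sum3} with $K=Y$. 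Set $K:=Y$, so $I-K=X$, and \eqref{comsum2sum} holds.

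\medskip

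\textbf{Where the difficulty lies.} The routine parts are the domain bookkeeping and the operator-range norm identities. The one genuinely delicate point, in both directions, is handling the multivalued part of $T$: the splitting $g=g_1+g_2$ is only unique modulo $\mul T_1\cap\mul T_2$, and one must make sure the Pythagorean equality (b), which is a statement about every such splitting, does pin down $g_1,g_2$ as $Xg,Yg$, and that the directness in \eqref{sum3} is equivalent to the strictness guaranteed by Lemma \ref{nonorthsum1}. I would treat this carefully by first disposing of the case $f=0$ (which isolates $\mul T_1\cap\mul T_2=\{0\}$ from condition (b) as in the proof of Lemma \ref{nonorthsum1}), and only then running the equality-case argument of Proposition \ref{DrRoN} for general $\{f,g\}\in T$. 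Everything else is a direct application of Corollary \ref{orth+}, Lemma \ref{opnieuw}, and Proposition \ref{DrRoN}.
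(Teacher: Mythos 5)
Your proposal is correct and follows essentially the same route as the paper: the forward direction rests on Corollary \ref{orth+}, strictness coming from the directness in \eqref{sum3}, and the identity \eqref{ip}, while the converse uses the equality case of Proposition \ref{DrRoN} to force $g_1=Xg$, $g_2=Yg$ and then takes $K=Y$, with Lemma \ref{nonorthsum1} giving the directness in \eqref{sum3}. The only loose step is your phrase ``conversely $XT+YT\subset T_1+T_2=T$'', which does not follow from the inclusions $T_1\subset XT$, $T_2\subset YT$ you just stated; it is harmless, though, because the same forced form $g_1=Xg$, $g_2=Yg$ applied to an arbitrary $\{f,g\}\in T$ yields $XT\subset T_1$ and $YT\subset T_2$ directly, which is exactly how the paper (via \cite[Proposition 1.1.2]{BHS} and the multivalued-part argument using strictness) concludes $T_1=XT$ and $T_2=YT$.
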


\begin{proof}
Let $T \in \bLR(\sH, \sK)$
and let $K\in\mathbf{B}(\sK)$ be a nonnegative contraction, such that \eqref{sum3} holds.
 By Corollary \ref{orth+} the relations $T_1=(I-K)T$ and $T_2=KT$ in \eqref{comsum2sum}
satisfy $\dom T_1=\dom T_2=\dom T$ and $T \subset T_1+T_2$.
Again by Corollary \ref{orth+} and the identity in \eqref{sum3}
there is the decomposition $T=T_1 + T_2$.
Thus the identities in \eqref{ltdsum} are satisfied.
Since the sum in \eqref{sum3} is direct, the sum $T=T_1+T_2$ is strict.

Now let $\sX$ and $\sY$ be the operator range spaces generated
by the nonnegative contractions $X=I-K$ and $Y=K$, respectively.
Clearly, $\sX$ and $\sY$ form a pair of complemented spaces,
contractively contained in $\sK$, and furthermore
\[
\ran T_1 \subset \ran (I-K) \subset \sX \quad \mbox{and}
\quad \ran T_2 \subset \ran K \subset \sY,
\]
which gives (a) in Definition \ref{nonorthsum}.
In order to check the Pythagorean property (b)
in Definition \ref{nonorthsum},
let $\{f,g\} \in T$. Then $\{f,g\}=\{f, g_1+g_2\}$
with $\{f,g_1\} \in T_1$ and $\{f,g_2\} \in T_2$.
Since the sum $T=T_1+T_2$ is strict, one sees $g_1=(I-K)g$ and $g_2=Kg$.
This implies with \eqref{ip} that
 \[
\begin{split}
 \|g_1\|_\sX^2 + \| g_2\|_\sY^2 &= \|( I-K)g\|_\sX^2+\|Kg\|_\sY^2 \\
 &= ((I-K)g,g)_\sK+(Kg,g)_\sK=\|g\|_\sK^2,
 \end{split}
\]
and the Pythagorean property has been shown.
Hence the conditions in Definition \ref{nonorthsum} are satisfied.

\medskip

Conversely, let $T=T_1+T_2$ be a pseudo-orthogonal decomposition
of $T$ of the form \eqref{ltdsum}.
Let $\{f,g\} \in T$, then by (a) and (b) of Definition \ref{nonorthsum} one has
 for all
 $\{f,g_1\} \in T_1$ and $\{f,g_2\} \in T_2$ with $g=g_1+g_2$, that
  \[
  \|g\|_\sK^2=\|g_1\|_\sX^2 + \|g_2\|_\sY^2.
\]
Thanks to this Pythagorean identity and Proposition \ref{DrRoN}
one obtains $g_1=Xg$ and $g_2=Yg$, which shows $\{f, Xg\}=\{f, g_1\} \in T_1$ and
$\{f, Yg\}=\{f, g_2\} \in T_2$.
 Consequently, one sees the inclusions
\begin{equation}\label{grei3}
XT \subset T_1 \quad \mbox{and} \quad YT \subset T_{2}.
\end{equation}
  By definition,  $\dom T=\dom T_{1}$ and $\dom T=\dom T_2$,
  and  it follows from \eqref{grei3} and \cite[Proposition 1.1.2]{BHS} that
\begin{equation}\label{grei}
 T_{1}=XT \hplus (\{0\} \times \mul T_{1}) \quad \mbox{and}
 \quad   T_{2}=YT \hplus (\{0\} \times \mul T_{2});
\end{equation}
here ``$\hplus$'' stands for the componentwise sum (linear spans) of the graphs.
Observe that it follows from \eqref{grei3} that
$X \mul T \subset \mul T_1$ and $Y \mul T \subset \mul T_2$.
Now let $\{0,h \} \in \mul T_1 \subset \mul T$. Then $X+Y=I$ gives
\[
  h-Xh =Yh \quad \mbox{with} \quad h-Xh \in \mul T_1 \quad \mbox{and} \quad Yh \in \mul T_2.
\]
By Lemma \ref{nonorthsum} it follows that $h=Xh$ and thus $(\{0\} \times \mul T_1) \subset XT$.
Hence, by \eqref{grei} one sees that $T_1=XT$ and, likewise, $T_2=YT$.
Consequently, with $K=Y$ one obtains a nonnegative contraction
$K \in \bB(\sK)$ for which \eqref{sum3} and \eqref{comsum2sum}  hold.
\end{proof}

Note that the nonnegative contraction $K \in \bB(\sK)$ in \eqref{comsum2sum}
is uniquely determined if the relation $T$ is minimal in the sense that $\cran T=\sK$;
 cf. \cite{HSS2018}.
In the case of decompositions of semibounded forms via representing maps
the minimality may be assumed without loss of generality
(see also \cite{HS2022c}).

\medskip

Let $T$, $T_1$, and $T_2$ belong to $\bLR(\sH, \sK)$ and assume that \eqref{ltdsum} holds.
Let the Hilbert space $\sK$ have the
orthogonal decomposition $\sK=\sX \oplus \sY$, where $\sX$ and $\sY$ are
closed subspaces of $\sK$. Then the corresponding nonnegative
contractions $X$ and $Y$, which satisfy $X+Y=I$, are
orthogonal projections onto $\sX$ and $\sY$.
Clearly, the condition {\rm (a)} of Definition {\rm \ref{nonorthsum}}
implies the condition {\rm (b)}.
Therefore the following definition is natural.

\begin{definition}\label{orthsum}
Let $T \in \bLR(\sH,\sK)$ and assume that  $\sK$ has an orthogonal decomposition
$\sK=\sX+\sY$.
Then the sum \eqref{ltdsum} is called an orthogonal sum decomposition of $T$ connected with
the orthogonal decomposition $\sK=\sX+\sY$ {\rm(}or, equivalently,
with the orthogonal projections $I-P$ and $P${\rm )}
if $\ran T \subset \sX$ and $\ran T_2 \subset \sY$.
 \end{definition}

The characterization of orthogonal sum decompositions can be given
as a corollary of Theorem \ref{prelem0sum}; see \cite{HSS2018}, \cite{HS2022a}.

\begin{corollary}
Let  $T \in \bLR(\sH,\sK)$ be a linear relation.
Assume that $P\in\mathbf{B}(\sK)$ is an orthogonal projection which satisfies
\begin{equation}\label{Sum3}
\mul T= (I-P) \,\mul T + P \, \mul T,
\end{equation}
and define
\begin{equation}\label{Comsum2sum}
 T_1=(I-P)T \quad \mbox{and} \quad T_2=PT.
\end{equation}
Then the sum  $T=T_1+T_2$ in \eqref{ltdsum}
is an orthogonal sum decomposition of $T$,
connected with the pair $I-P$ and $P$
in the sense of Definition {\rm \ref{orthsum}}.

Conversely, let the sum $T=T_1+T_2$ in \eqref{ltdsum}
be an orthogonal sum decomposition
of $T \in \bLR(\sH,\sK)$
in the sense of Definition {\rm \ref{orthsum}}.
Then there exists an orthogonal projection $P \in \bB(\sK)$
for which \eqref{Sum3} and \eqref{Comsum2sum} are satisfied.
\end{corollary}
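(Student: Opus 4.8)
The plan is to deduce this Corollary almost entirely from Theorem \ref{prelem0sum}, since an orthogonal decomposition is by definition the special case of a pseudo-orthogonal decomposition in which the associated nonnegative contractions are orthogonal projections (see the discussion after Definition \ref{orthsum} and in Lemma \ref{Klemma}). For the forward direction, suppose $P\in\bB(\sK)$ is an orthogonal projection satisfying \eqref{Sum3}. Since a projection is in particular a nonnegative contraction, and since $(I-P)\,\mul T$ and $P\,\mul T$ are automatically mutually orthogonal (hence their sum is direct), the hypothesis \eqref{sum3} of Theorem \ref{prelem0sum} holds with $K=P$. Thus $T=T_1+T_2$ with $T_1=(I-P)T$, $T_2=PT$ is a pseudo-orthogonal decomposition connected with the pair $I-P,P$. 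Now $\sX=\ran(I-P)^\half=\ran(I-P)$ and $\sY=\ran P^\half=\ran P$ are \emph{closed} subspaces of $\sK$ with the inner product inherited from $\sK$ (by \eqref{iiippp}), so the pseudo-orthogonal decomposition $\sK=\sX+\sY$ is in fact the orthogonal decomposition $\sK=\sX\oplus\sY$; and $\ran T_1\subset\ran(I-P)=\sX$, $\ran T_2\subset\ran P=\sY$. Hence $T=T_1+T_2$ meets the requirements of Definition \ref{orthsum}.

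For the converse, let $T=T_1+T_2$ be an orthogonal sum decomposition in the sense of Definition \ref{orthsum}, connected with the orthogonal decomposition $\sK=\sX\oplus\sY$ and the orthogonal projections $X=I-P$, $Y=P$ onto $\sX$, $\sY$. As noted in the paragraph preceding Definition \ref{orthsum}, condition (a) of Definition \ref{nonorthsum} here implies condition (b): indeed, for $g=g_1+g_2$ with $g_1\in\ran T_1\subset\sX$, $g_2\in\ran T_2\subset\sY$, the subspaces being \emph{orthogonal} and isometrically embedded gives $\|g\|_\sK^2=\|g_1\|_\sK^2+\|g_2\|_\sK^2=\|g_1\|_\sX^2+\|g_2\|_\sY^2$. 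Therefore $T=T_1+T_2$ is a pseudo-orthogonal decomposition in the sense of Definition \ref{nonorthsum}, connected with the pair $I-P$ and $P$. Applying the converse part of Theorem \ref{prelem0sum} yields a nonnegative contraction $K\in\bB(\sK)$ with $T_1=(I-K)T$, $T_2=KT$ and $\mul T=(I-K)\,\mul T+K\,\mul T$ (direct). It remains to identify $K$ with the projection $P$.

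The one point requiring a small argument is precisely this identification, and it is the only real obstacle. The cleanest route is to retrace the last part of the proof of Theorem \ref{prelem0sum}: there it is shown that $T_1=XT$ and $T_2=YT$ with $X=I-P$, $Y=P$ the given orthogonal projections — the Pythagorean identity plus Proposition \ref{DrRoN} force $g_1=Xg$, $g_2=Yg$, and then Lemma \ref{nonorthsum1} removes the multivalued discrepancy, giving $T_1=XT$ and $T_2=YT$ outright. Thus one may simply take $K=Y=P$, which is an orthogonal projection, and then \eqref{Comsum2sum} is $T_1=(I-P)T$, $T_2=PT$ while \eqref{Sum3} reads $\mul T=(I-P)\,\mul T+P\,\mul T$, which is exactly the direct-sum identity \eqref{sum3} specialized to $K=P$. (Alternatively, if $T$ is minimal, $\cran T=\sK$, the contraction $K$ in Theorem \ref{prelem0sum} is unique, so $K=P$ automatically; but the retracing argument above avoids needing minimality.) This completes both directions.
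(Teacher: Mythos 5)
Your proposal is correct and follows exactly the route the paper intends: the paper states this result without proof as a direct corollary of Theorem \ref{prelem0sum}, specializing the nonnegative contraction to an orthogonal projection. Your extra care in the converse direction---retracing the end of the proof of Theorem \ref{prelem0sum} to see that the contraction produced there is precisely $K=Y=P$, so that it is indeed an orthogonal projection---fills in the only point the paper leaves implicit, and does so correctly.
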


Let $\sH$ be a Hilbert space and let $X,Y \in \bB(\sK)$ be nonnegative contractions
for which $X+Y=I$. The operators $X$ and $Y$ induce Hilbert spaces
$\sX$ and $\sY$ for which $\sK=\sX+\sY$, and Hilbert spaces $\sK_1$ and $\sK_2$
for which $\sK=\sK_1+\sK_2$
each with an overlapping space $\sX \cap \sY$ and $\sK_1 \cap \sK_2$, respectively.
Then one has the inclusions
\[
\left\{\begin{array}{l}
 \sX \cap \sY= \ran X^\half \cap \ran Y^\half
 = \ran X^\half Y^\half \subset  \cran X^\half Y^\half=\cran XY,\\
\ran X \cap \ran Y=\ran XY \subset \cran XY
=\cran X \cap \cran Y =\sK_1 \cap \sK_2,
\end{array}
\right.
\]
see Section \ref{compl}.
For a linear relation $T \in \bLR(\sH,\sK)$
it has been shown in Corollary \ref{orth+}
that with $T_1=XT$ and $T_2=YT$
one has $T=T_1+T_2$ if and only if the linear subspace $\mul T$
is invariant under $X$ or $Y$. Under these circumstances
it is clear that
\[
\ran T_1 \cap \ran T_2 \subset \ran X \cap \ran Y=\ran XY.
\]
In order to give a characterization for the intersection $\ran T_1 \cap \,\ran T_2$,
it is convenient to introduce the maximal linear subspace $\sM$ of $\ran T$,
which is mapped back into $\ran T$ by $X$ or by $Y$:
\begin{equation}\label{ell}
 \sM=\big\{\,\eta\in \ran T:\, X\eta \in \ran T\,\big\}=\big\{\,\eta\in \ran T:\, Y\eta \in \ran T\,\big\}.
\end{equation}
Note that $\mul T \subset \sM$ if $T=T_1+T_2$.

\begin{theorem}\label{overl}
Let   $T \in \bLR(\sH,\sK)$ have a decomposition
$T=T_1+T_2$, where $T_1=XT$ and $T_2=YT$
for some nonnegative contractions $X,Y \in \bB(\sK)$ with $X+Y=I$.
Then $\ran T_1 \cap \ran T_2$ is given by
\begin{equation}\label{OL1}
 \ran T_1 \cap \ran T_2 = XY\, \sM,
\end{equation}
where $\sM$ is given in \eqref{ell}.
Consequently, the intersection $\ran T_1 \cap \, \ran T_2$ is nontrivial if and only if
$\sM \cap \ker XY \neq \{0\}$. In particular,
if $X$ or $Y$ is an orthogonal projection,
then $\ran T_1 \cap \, \ran T_2=\{0\}$.
\end{theorem}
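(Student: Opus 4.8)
The plan is to prove the set equality \eqref{OL1} by establishing its two inclusions directly, and then to read off the two ``consequently'' statements. The only ingredients needed are that $\ran T$ is a linear subspace, that $X$ and $Y$ commute (Lemma~\ref{Klemma}), and the relation $X+Y=I$; in particular the hypothesis $T=T_1+T_2$ --- equivalently, by Corollary~\ref{orth+}, the $X$-invariance of $\mul T$ --- will not enter the proof of \eqref{OL1} itself but only serves to make the statement meaningful.

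First I would record two elementary preliminaries: since $X$ and $Y$ are everywhere defined operators, $\ran T_1=X\,\ran T$ and $\ran T_2=Y\,\ran T$; and for $\eta\in\ran T$ the two conditions $X\eta\in\ran T$ and $Y\eta=\eta-X\eta\in\ran T$ are equivalent, so that the two descriptions of $\sM$ in \eqref{ell} agree. For the inclusion $XY\,\sM\subseteq\ran T_1\cap\ran T_2$ I take $\eta\in\sM$: since $Y\eta\in\ran T$ one gets $XY\eta=X(Y\eta)\in X\,\ran T=\ran T_1$, and since $X\eta\in\ran T$ and $XY=YX$ one gets $XY\eta=Y(X\eta)\in Y\,\ran T=\ran T_2$. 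For the reverse inclusion I take $\xi\in\ran T_1\cap\ran T_2$ and write $\xi=Xg=Yk$ with $g,k\in\ran T$; the key move is to set $\eta:=g+k\in\ran T$. Then $X+Y=I$ and $Yk=\xi$ give $X\eta=Xg+Xk=\xi+(k-Yk)=k\in\ran T$, so $\eta\in\sM$, while $Y\eta=\eta-X\eta=g$, whence $XY\eta=X(Y\eta)=Xg=\xi$. This proves \eqref{OL1}.

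The remaining assertions then follow immediately. By \eqref{OL1}, $\ran T_1\cap\ran T_2$ is nontrivial exactly when $XY\,\sM\neq\{0\}$, that is, exactly when $\sM$ is not contained in $\ker XY$. And if $X$ --- equivalently $Y$ --- is an orthogonal projection, then by Lemma~\ref{Klemma} both $X$ and $Y$ are orthogonal projections with $XY=0$, so $XY\,\sM=\{0\}$ and hence $\ran T_1\cap\ran T_2=\{0\}$. (As a consistency check, for $T$ the identity one has $\ran T=\sK=\sM$, and \eqref{OL1} specializes to $\ran X\cap\ran Y=\ran XY$ from Lemma~\ref{Klemma}.)

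I expect the only genuine --- and still mild --- obstacle to be the reverse inclusion: once one hits on adding the two preimages $g$ and $k$, checking $\eta=g+k\in\sM$ and $XY\eta=\xi$ is routine manipulation with $X+Y=I$. I would also be careful at the outset to verify that the two forms of $\sM$ in \eqref{ell} really coincide, since the argument passes freely between the conditions $X\eta\in\ran T$ and $Y\eta\in\ran T$.
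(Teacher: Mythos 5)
Your proof is correct and follows essentially the same route as the paper's: the inclusion $XY\,\sM \subset \ran T_1 \cap \ran T_2$ via the commutativity of $X$ and $Y$, and the reverse inclusion by writing $\xi = Xg = Yk$ with $g,k \in \ran T$ and passing to $\eta = g+k$, which is precisely the paper's choice $\eta = \varphi + \psi$. One remark: in the ``consequently'' clause you (correctly) conclude that $\ran T_1 \cap \ran T_2$ is nontrivial if and only if $XY\,\sM \neq \{0\}$, that is, $\sM \not\subset \ker XY$; the theorem's literal wording ``$\sM \cap \ker XY \neq \{0\}$'' appears to be a misprint (for instance $X=Y=I/2$ and $T=I$ give $\sM\cap\ker XY=\{0\}$ while the intersection of the ranges is all of $\sK$), so your formulation is the one that actually follows from \eqref{OL1}.
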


\begin{proof}
For the inclusion $(\subset)$ in \eqref{OL1},
assume that $\omega\in \ran T_1 \cap \ran T_2$.
Then for some $\varphi, \psi \in \ran T$ one has
\begin{equation}\label{om01}
 \omega=X\varphi =Y\psi.
\end{equation}
This shows $\psi=X\eta$, where $\eta=\varphi+\psi$;
hence, $\eta \in \ran T$.
Since $\psi=X \eta \in \ran T$,
one sees that $\eta \in \sM$.
Moreover, it follows from \eqref{om01} that
\[
 \omega=Y \psi=YX \eta \in XY \,\sM,
\]
which gives $\ran T_1 \cap \ran T_2\subset XY\, \sM$.

For the inclusion ($\supset$) in \eqref{OL1},
assume that $\eta \in \sM$.
Then, by \eqref{ell},  $\eta\in \ran T$, $X\eta \in \ran T$,
and $Y\eta\in \ran T$.
It follows that $XY\eta \in Y\ran T=\ran T_1$
and that $XY \eta \in X \ran T=\ran T_2$.
Therefore one sees that  $XY\eta \in \ran T_1 \cap \ran T_2$.
Thus, $XY \, \sM \subset  \ran T_1 \cap \ran T_2$.

The final statement follows directly from the identity \eqref{OL1}.
In particular, if $X$ or $Y$ is an orthogonal projection then $XY=0$.
\end{proof}

There is a similar result for the intersection of
$\cran T_1 \cap \cran T_2$ in the presence
of a minimality condition.

\begin{lemma}\label{lap}
Let   $T \in \bLR(\sH,\sK)$ have a decomposition
$T=T_1+T_2$, where $T_1=XT$ and $T_2=YT$
for some nonnegative contractions $X,Y \in \bB(\sK)$ with $X+Y=I$.
Assume in addition that $\cran T=\sK$. Then
\begin{equation}\label{ranT1ranT2}
 \cran T_1 \cap \cran T_2 = \cran XY.
\end{equation}
Consequently, $\cran T_1 \cap \cran T_2=\{0\}$
if and only if $X$ or $Y$ are orthogonal projections.
\end{lemma}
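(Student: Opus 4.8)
The plan is to mirror the proof of Theorem~\ref{overl} but with closures, while exploiting the extra minimality hypothesis $\cran T=\sK$ to control the domains involved. First I would establish the inclusion $(\subset)$ in \eqref{ranT1ranT2}. Since $T_1=XT$ and $T_2=YT$ we have $\ran T_1\subset\ran X$ and $\ran T_2\subset\ran Y$, hence $\cran T_1\subset\cran X$ and $\cran T_2\subset\cran Y$; therefore $\cran T_1\cap\cran T_2\subset\cran X\cap\cran Y=\cran XY$ by the third identity in \eqref{K0} of Lemma~\ref{Klemma}. This direction needs nothing beyond Lemma~\ref{Klemma} and is immediate.

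The substance is the reverse inclusion $\cran XY\subset\cran T_1\cap\cran T_2$. Here is where minimality enters: I would first show $\cran T_1=\cran XT=\cran X$ and $\cran T_2=\cran YT=\cran Y$. Indeed $\cran XT=\cran(X\cdot\cran T)=\cran(X\sK)=\cran X$ using $\cran T=\sK$ and continuity of $X$ (and likewise for $Y$); one has to be mildly careful that $\ran XT$ means $\{Xg:\{f,g\}\in T\}=X\ran T$, which is dense in $X\sK$ since $\ran T$ is dense in $\sK$ and $X$ is bounded, so its closure equals $\cran X$. Granting this, $\cran T_1\cap\cran T_2=\cran X\cap\cran Y=\cran XY$ again by \eqref{K0}, which finishes the identity \eqref{ranT1ranT2} in one stroke. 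So the whole argument essentially collapses to the two density facts $\cran T_i=\cran X,\cran Y$, after which Lemma~\ref{Klemma} does the rest.

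For the final sentence, I would argue: if $\cran T_1\cap\cran T_2=\{0\}$ then $\cran XY=\{0\}$, so $XY=0$; combined with $X+Y=I$ this gives $X=X(X+Y)=X^2$, i.e.\ $X$ is a nonnegative idempotent bounded operator, hence an orthogonal projection, and then $Y=I-X$ is one too. (Equivalently invoke the last assertion of Lemma~\ref{Klemma} with $\ran X\cap\ran Y\subset\cran X\cap\cran Y=\{0\}$.) Conversely, if $X$ (say) is an orthogonal projection, then $Y=I-X$ is the complementary projection, $XY=0$, $\cran XY=\{0\}$, and \eqref{ranT1ranT2} yields $\cran T_1\cap\cran T_2=\{0\}$.

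The main obstacle is not deep but is the place where care is required: verifying that $\ran(XT)$ is dense in $\ran X$ (not merely contained in it) under the minimality hypothesis, i.e.\ that passing $X$ through the dense range of $T$ and then closing recovers all of $\cran X$. This is where the hypothesis $\cran T=\sK$ is genuinely used, and it is the only step that distinguishes Lemma~\ref{lap} from the range-level statement in Theorem~\ref{overl}; everything else is a direct citation of Lemma~\ref{Klemma}.
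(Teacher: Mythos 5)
Your proof is correct and follows essentially the same route as the paper: the key steps are the density facts $\cran T_1=\cran XT=\cran X$ and $\cran T_2=\cran YT=\cran Y$ (where the minimality $\cran T=\sK$ is used), followed by the identity $\cran X\cap\cran Y=\cran XY$ from \eqref{K0}, with the last statement reduced to Lemma~\ref{Klemma}. Your explicit verification that $X\ran T$ is dense in $\cran X$, and the direct argument $XY=0\Rightarrow X=X^2$ for the projection claim, simply spell out details the paper leaves implicit.
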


\begin{proof}
Assume $\cran T=\sK$. To see   \eqref{ranT1ranT2} observe the identities
$\cran T_1=\cran X$, $\cran T_2=\cran Y$, and $\cran (XY)T=\cran XY$.
It remains to apply \eqref{K0}, which shows that $\cran X \cap \cran Y=\cran XY$.
For the last statement, see also Lemma \ref{Klemma}.
\end{proof}

\section{Closable and singular relations}\label{Regsing}

Let  $T \in \bLR(\sH,\sK)$ be a linear operator or relation.
The relation $T$ is said to be \textit{closable} if the closure $T^{**}$ of $T$
is the graph of a linear operator; i.e., $\mul T^{**}=\{0\}$.  Since $\mul T^{**}=(\dom T^*)^\perp$,
it is clear that $T$ is closable if and only if $\dom T^*$ is dense in $\sK$.
The relation $T$ is said to be \textit{singular} if $T^{**}=\dom T^{**} \times \ran T^{**}$, in which
case $\dom T^{**}$ and $\ran T^{**}$ are closed. Clearly $T$ is singular if and only if
$\dom T^{**} \subset \ker T^{**}$ or $\ran T^{**} \subset \mul T^{**}$. Equivalently,
one sees that $T^*$ is singular precisely when $T^*=\dom T^* \times \mul T^*$, which is equivalent to
 $\dom T^* \subset \ker T^*$
or $\ran T^* \subset \mul T^*$.
 In particular, $T$ is singular if and only if  $T^*$ is singular.
These characterizations can be found e.g. in \cite{BHS}, \cite{HSS2018}.

\medskip

Let $T \in \bLR(\sH,\sK)$ and let $R \in \bB(\sK)$ be a nonnegative contraction.
The interest is in properties of the product
\[
 RT=\big\{ \{f, Rf'\} : \, \{f,f'\} \in T\big\},
\]
so that $RT \in \bLR(\sH,\sK)$  with $\dom RT=\dom T$. Recall the general fact that
\[
\mul RT =R \,\mul T.
\]
In particular, $RT$ is an operator if and only if
\begin{equation}\label{ber1+}
 \mul T \subset \ker R.
\end{equation}
Since $R \in \bB(\sK)$ one has $(RT)^*=T^*R$ (cf.  \cite{BHS})
and this leads to the inclusions
\begin{equation}\label{ber2}
 RT^{**} \subset (RT)^{**} \quad \mbox{and} \quad R \,\mul T^{**} \subset \mul (RT)^{**}.
\end{equation}
To proceed, some useful observations about the relation $(RT)^*=T^*R$
 are needed.
Define the linear subset $\cD \subset \cran R$ by
\begin{equation}\label{dense0}
 \cD=  \big\{k\in\cran R:\, R k \in \dom T^* \big\}.
\end{equation}
Then it is clear from the decomposition $\sK=\ker R\oplus \cran R$
that
\begin{equation}\label{dense}
 \dom T^*R=\big\{k\in\sK:\, R k \in \dom T^* \big\}=\ker R \oplus \cD.
  \end{equation}
It follows from \eqref{dense} and the definition in \eqref{dense0}, respectively, that
\begin{equation}\label{dense1}
 R( \dom T^*R) =R\cD= \ran R\cap \dom T^*.
\end{equation}

\medskip

The next two lemmas give criteria for the relation $RT$
to be closable or singular, respectively;
see \cite{HSS2018} for the case where $R$ is an orthogonal projection.
First the characterization of the closable case will be considered.

\begin{lemma}\label{LebtypelemA}
Let $T \in \bLR(\sH,\sK)$ and
let $R \in\mathbf{B}(\sK)$ be a nonnegative contraction.
Then $R T$ is closable if and only if
\begin{equation}\label{RTreg}
  \clos \{k\in\cran R:\, R k \in \dom T^* \}=\cran R.
\end{equation}
Furthermore, if $RT$ is closable, then
\begin{equation}\label{RTregb}
  \clos (\ran R\cap \dom T^*)=\cran R,
\end{equation}
and, in particular,
\begin{equation}\label{RTregb-}
  \ran R\subset\cdom T^* \quad \mbox{or, equivalently,}
  \quad \mul T^{**}\subset \ker R.
\end{equation}
If $\ran R$ is closed, then the conditions \eqref{RTreg}
and \eqref{RTregb} are equivalent.
Moreover, if $R \in\mathbf{B}(\sK)$ is invertible,
then $RT$ is closable if and only if $T$ is closable.
\end{lemma}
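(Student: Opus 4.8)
The plan is to reduce closability of the product $RT$ to a density statement and then to unfold the structure of $\dom T^*R$ recorded in \eqref{dense} and \eqref{dense1}. Since $R\in\bB(\sK)$ one has $(RT)^*=T^*R$, and by the identity $\mul S^{**}=(\dom S^*)^\perp$ applied to $S=RT$, the relation $RT$ is closable precisely when $\dom(RT)^*=\dom T^*R$ is dense in $\sK$. By \eqref{dense} this domain splits as the orthogonal sum $\ker R\oplus\cD$ with $\cD\subseteq\cran R$; because $\sK=\ker R\oplus\cran R$ is the corresponding orthogonal decomposition, density of $\ker R\oplus\cD$ amounts exactly to $\clos\cD=\cran R$, which is the criterion \eqref{RTreg}. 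This gives the main equivalence with essentially no computation.

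For the ``furthermore'' part I would start from \eqref{RTreg}, i.e.\ $\clos\cD=\cran R$, and push it forward through $R$. By continuity $R(\clos\cD)\subseteq\clos(R\cD)$, and since $R(\cran R)=\ran R$ (as $R$ annihilates $\ker R$), this gives $\ran R\subseteq\clos(R\cD)$; taking closures and using $\clos(\ran R)=\cran R$ produces $\cran R\subseteq\clos(R\cD)$, while the reverse inclusion is trivial. In view of \eqref{dense1}, $R\cD=\ran R\cap\dom T^*$, so this is exactly \eqref{RTregb}. The consequence \eqref{RTregb-} then follows formally: \eqref{RTregb} gives $\cran R\subseteq\cdom T^*$, hence $\ran R\subseteq\cdom T^*$, and passing to orthogonal complements via $\mul T^{**}=(\dom T^*)^\perp$ and $\ker R=(\cran R)^\perp$ turns this into $\mul T^{**}\subseteq\ker R$.

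The closed-range equivalence is where the real subtlety sits, and it is the step I expect to be the main obstacle. The implication \eqref{RTreg}$\Rightarrow$\eqref{RTregb} is already covered above and holds unconditionally; the point is the converse. Assuming $\ran R$ closed, $R$ restricts to a bounded bijection of $\cran R$ onto itself, so by the bounded inverse theorem $R^{-1}$ is bounded on $\cran R$. Applying this continuous inverse to $\clos(R\cD)=\cran R$ and using $R^{-1}(R\cD)=\cD$ (injectivity of $R$ on $\cran R$) gives $\cran R=R^{-1}(\cran R)\subseteq\clos\cD$, hence $\clos\cD=\cran R$, which is \eqref{RTreg}. It is precisely the absence of a bounded inverse when $\ran R$ is not closed that blocks this transport in general, so the hypothesis cannot be dropped.

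Finally, for the invertible case $R\in\bB(\sK)$ is a homeomorphism with $\ker R=\{0\}$ and $\cran R=\sK$, so $\dom T^*R=\{k:\,Rk\in\dom T^*\}=R^{-1}(\dom T^*)$. A homeomorphism preserves density, so $\dom T^*R$ is dense in $\sK$ if and only if $\dom T^*$ is; by the first step this says $RT$ is closable if and only if $T$ is closable.
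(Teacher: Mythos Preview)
Your argument is correct and follows essentially the same route as the paper: reduce closability to density of $\dom T^*R$, use the splitting \eqref{dense} to obtain \eqref{RTreg}, push the density forward by $R$ via \eqref{dense1} to get \eqref{RTregb}, and for closed range pull it back through the bounded inverse of $R$ on $\cran R$. The only cosmetic difference is that the paper first obtains \eqref{RTregb-} directly from the inclusion \eqref{ber2} and then remarks that it also follows from \eqref{RTregb}, whereas you take the latter path exclusively.
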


\begin{proof}
Recall that $R T$ is closable if and only if its adjoint $(RT)^*$ is densely defined.
 Thus it follows from $\dom (RT)^*=\dom T^*R$
 and \eqref{dense} that $RT$ is closable if and only if
$\cD$ is dense in $\cran R$, i.e., if and only if \eqref{RTreg} is satisfied.

Now assume that $RT$ is closable, i.e.,  \eqref{RTreg} holds.
Then $\cD$ is dense in $\cran R$. As a consequence,
also $R\cD$ is dense in $\cran R$. Thanks to \eqref{dense1}
one sees that \eqref{RTregb} holds.

The assertion $\mul T^{**}\subset \ker R$ in \eqref{RTregb-}
follows directly from \eqref{ber2}.
It is clearly equivalent to  $\ran R\subset\cdom T^*$.
Both assertions can also be seen as consequences of
the identity \eqref{RTregb}.

As to the last assertions, it suffices to show that \eqref{RTregb} implies \eqref{RTreg}
if $\ran R$ is closed. In this case $R$ maps $\cran R$ bijectively onto itself
and it follows from \eqref{dense1}  that $\cD=R^{-1} (\ran R \cap \dom T^*)$.
Thus if $\ran R \cap \dom T^*$ is dense in $\cran R$ then $\cD$ is dense $\cran R$.
 Therefore, \eqref{RTregb} implies \eqref{RTreg}.
 \end{proof}

Note that in the special case when $R$ is an orthogonal projection
closability of $RT$ was characterized in \cite[Lemma 2.5, Lemma 3.4]{HSS2018}
via the condition \eqref{RTregb}.

\begin{corollary}\label{Lebtypelem1}
With $T$ and $R$ as in Lemma {\rm \ref{LebtypelemA}} the following statements hold:
\begin{enumerate} \def\labelenumi{\rm(\alph{enumi})}

\item If $R T$ is closable and $\mul T^{**} \cap \ker R=\{0\}$,
then $T$ is closable.

\item If $\dom T^*$ is closed, then $R T$ is closable if and only if $\ran R \subset \dom T^*$.
In this case  $(RT)^{**}\in\mathbf{B}(\cdom T,\sK)$.
\end{enumerate}
\end{corollary}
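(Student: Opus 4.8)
The plan is to deduce both statements directly from Lemma \ref{LebtypelemA}, using the equivalences between closability of $RT$, density of $\cD$ in $\cran R$, and the inclusion $\mul T^{**}\subset\ker R$ collected there.

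For part (a): Suppose $RT$ is closable and $\mul T^{**}\cap\ker R=\{0\}$. By Lemma \ref{LebtypelemA}, specifically \eqref{RTregb-}, one already has $\mul T^{**}\subset\ker R$. Intersecting with the hypothesis $\mul T^{**}\cap\ker R=\{0\}$ forces $\mul T^{**}=\{0\}$, which is exactly the statement that $T$ is closable. This is a one-line argument once \eqref{RTregb-} is invoked; no obstacle here.

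For part (b): Assume $\dom T^*$ is closed. Then $\cdom T^*=\dom T^*$, so $\ran R\subset\cdom T^*$ reads simply $\ran R\subset\dom T^*$. I would argue: if $RT$ is closable, then \eqref{RTregb-} gives $\ran R\subset\cdom T^*=\dom T^*$ directly. Conversely, if $\ran R\subset\dom T^*$, then every $k\in\cran R$ satisfies $Rk\in\ran R\subset\dom T^*$, so $\cD=\cran R$ trivially, and \eqref{RTreg} holds, giving closability of $RT$. For the final claim, once $\ran R\subset\dom T^*$ one has $T^*R$ defined on all of $\sK$ (its domain is $\ker R\oplus\cD=\ker R\oplus\cran R=\sK$ by \eqref{dense}), so $(RT)^*=T^*R$ is everywhere defined; hence $(RT)^{**}=((RT)^*)^*=(T^*R)^*$ is a closed operator whose adjoint is everywhere defined, so $(RT)^{**}$ is bounded by the closed graph theorem. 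To identify its domain as $\cdom T$, note $\dom(RT)^{**}\supset\dom RT=\dom T$, and closedness plus boundedness give $\dom(RT)^{**}=\cdom(RT)=\cdom T$; one also checks $(RT)^{**}$ maps into $\sK$, which is immediate.

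The only mildly delicate point is the very last sentence, identifying $(RT)^{**}$ as an element of $\mathbf{B}(\cdom T,\sK)$: one must justify that a closable relation whose adjoint is everywhere defined has bounded closure with domain the closure of the original domain. This follows from the closed graph theorem together with the general fact $\dom S^{**}=\cdom S$ for any $S\in\bLR$, and I would simply cite \cite{BHS} for these standard facts rather than reprove them. Everything else is a direct transcription of Lemma \ref{LebtypelemA}.
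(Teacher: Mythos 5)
Parts (a) and the equivalence in part (b) are argued exactly as in the paper: (a) follows from \eqref{RTregb-}, and for (b) closability of $RT$ gives $\ran R\subset\cdom T^*=\dom T^*$ by \eqref{RTregb-}, while conversely $\ran R\subset\dom T^*$ forces $\cD=\cran R$, so that \eqref{RTreg} holds; this is the paper's reasoning verbatim.

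For the final claim of (b) your route is essentially the paper's (everything hinges on $\dom(RT)^*=\dom T^*R=\sK$), but the justification as written has two weak points. First, you apply the closed graph theorem to $(RT)^{**}$ before knowing that its domain is closed; the closed graph theorem yields boundedness only once the domain is known to be a closed subspace, so as phrased the step is circular (you later deduce closedness of the domain from the boundedness). The paper's order is the reverse: since $T^*R=(RT)^*$ is a closed relation with $\dom T^*R=\sK$, the domain of its adjoint $(RT)^{**}=(T^*R)^*$ is closed (the standard fact from \cite{BHS}: for a closed relation the domain is closed if and only if the domain of the adjoint is closed), hence the inclusions $\dom T\subset\dom(RT)^{**}\subset\cdom T$ give $\dom(RT)^{**}=\cdom T$, and only then does the closed graph theorem give $(RT)^{**}\in\mathbf{B}(\cdom T,\sK)$. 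Second, the ``general fact'' you propose to cite, $\dom S^{**}=\cdom S$ for every $S\in\bLR(\sH,\sK)$, is false: any closed, densely defined, unbounded operator $S$ satisfies $\dom S^{**}=\dom S\subsetneq \cdom S$. What is true, and what is needed here, is precisely that $\dom(RT)^{**}$ is closed because $\dom(RT)^*=\sK$ (equivalently, the fact you state informally, that a closable relation whose adjoint is everywhere defined has bounded closure defined on $\cdom T$, which is indeed available in \cite{BHS}). With these repairs your argument coincides with the paper's proof.
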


\begin{proof}
(a) If $RT$ be closable then $\mul T^{**} \subset \ker R$ by Lemma \ref{LebtypelemA}.
An equivalent statement is
 $\mul T^{**}=\mul T^{**} \cap \ker R$. Thus (a) is clear.

(b) Assume that $\dom T^*$ is closed. If $R T$ is closable, then $\ran R \subset \dom T^*$
by Lemma \ref{LebtypelemA}.
Conversely, if $\ran R \subset \dom T^*$ then $\dom T^*R=\sK$ and $R T$ is closable.
Since $\dom T^*R=\sK$ and $(RT)^{**}=(T^*R)^*$,
the domain of $(RT)^{**}$ is closed (see \cite{BHS}) and hence equal to $\cdom T$.
Thus, $(RT)^{**}\in\mathbf{B}(\cdom T,\sK)$ by the closed graph theorem.
\end{proof}

Next the characterization of the singular case will be considered.

\begin{lemma}\label{LebtypelemB}
Let $T \in \bLR(\sH,\sK)$ and
let $R \in\mathbf{B}(\sK)$ be a nonnegative contraction.
Then $R T$ is singular if and only if
\begin{equation}\label{KKsingQ}
    \ran R \cap \dom T^* \subset \ker T^*.
\end{equation}
If $T \in \bLR(\sH, \sK)$ has a dense range,
then $RT$ is singular if and only if
\[
\ran R \cap \dom T^* =\{0\}.
\]
\end{lemma}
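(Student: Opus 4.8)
The plan is to reduce the singularity of $RT$ to a condition on the adjoint $(RT)^*=T^*R$ and then to decode that condition by means of the description of $\dom T^*R$ in \eqref{dense}--\eqref{dense1}. Recall from the discussion preceding the lemma that $RT$ is singular if and only if $(RT)^*$ is singular. Since $R\in\bB(\sK)$ one has $(RT)^*=T^*R$, and, being an adjoint, this relation is closed, so its singularity amounts to the inclusion $\dom T^*R\subset\ker T^*R$. By \eqref{dense} one has $\dom T^*R=\ker R\oplus\cD$, with $\cD$ as in \eqref{dense0}.

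First I would note that $\ker R\subset\ker T^*R$ always holds: if $Rk=0$, then $\{Rk,0\}=\{0,0\}\in T^*$, hence $\{k,0\}\in T^*R$. Writing an arbitrary $k\in\dom T^*R$ as $k=k_0+k_1$ with $k_0\in\ker R$ and $k_1\in\cD$, one has $Rk=Rk_1$, and therefore $k\in\ker T^*R$ precisely when $k_1\in\ker T^*R$. Consequently, $\dom T^*R\subset\ker T^*R$ is equivalent to $\cD\subset\ker T^*R$. Now, for $k\in\cD$, the membership $k\in\ker T^*R$ means $\{Rk,0\}\in T^*$, that is, $Rk\in\ker T^*$. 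Since $R\cD=\ran R\cap\dom T^*$ by \eqref{dense1}, the inclusion $\cD\subset\ker T^*R$ is precisely $\ran R\cap\dom T^*\subset\ker T^*$, which is \eqref{KKsingQ}. Every implication in this chain is reversible, which proves the first assertion.

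For the final statement, I would use the standard identity $\ker T^*=(\ran T)^\perp=(\cran T)^\perp$ (see \cite{BHS}). If $T$ has dense range, so that $\cran T=\sK$, then $\ker T^*=\{0\}$, and since $\{0\}\subset\ran R\cap\dom T^*$ trivially, \eqref{KKsingQ} reduces to $\ran R\cap\dom T^*=\{0\}$; together with the first assertion this yields the claim. No estimates enter anywhere; the only points needing care are the relation bookkeeping — in particular the identity $(RT)^*=T^*R$ (which uses the boundedness of $R$) and the fact that \eqref{dense} and \eqref{dense1} describe $\dom T^*R$ relative to $\cran R$ rather than all of $\sK$ — and I do not anticipate any genuine obstacle beyond this translation.
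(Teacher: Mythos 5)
Your proposal is correct and follows essentially the same route as the paper: reduce singularity of $RT$ to $\dom T^*R\subset\ker T^*R$ via $(RT)^*=T^*R$, then translate this through \eqref{dense}--\eqref{dense1} into $\ran R\cap\dom T^*\subset\ker T^*$ (the paper applies $R$ to all of $\dom T^*R$ at once, which absorbs your separate treatment of $\ker R$, and it leaves the dense-range consequence $\ker T^*=(\ran T)^\perp=\{0\}$ implicit, which you correctly spell out).
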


\begin{proof}
Recall that $RT$ is singular if and only if the adjoint
$(R T)^*=T^*R$ is singular or, equivalently,
 \begin{equation}\label{K3}
\dom T^*R \subset \ker T^*R.
\end{equation}
 Since by \eqref{dense1} one has $R(\dom T^*R)=R\cD$,
the condition \eqref{K3} holds if and only if $R\cD\subset \ker T^*$.
By \eqref{dense1} this is equivalent to
 \eqref{KKsingQ}.
\end{proof}

\begin{remark}
The characterization of closability in Lemma  \ref{LebtypelemA}
has an alternative formulation.
If the relation $R T$ is closable then $\dom T^*R$ is dense, which
implies  $\mul T^{**} \subset \ker R$ (cf. \eqref{ber2}), and then 
\begin{equation*}
\begin{split}
 \dom T^*R& =\{k\in\sK:\, R k \in \dom T^* \} \\
 &=\mul T^{**} \oplus \{k\in \cdom T^*:\, R k \in \dom T^* \},
\end{split}
\end{equation*}
where now the orthogonal decomposition $\sK=\cdom T^* \oplus \mul T^{**}$ is used.
It is easily seen that the closability of $RT$ is equivalent to
\begin{equation*}
 \left\{ \begin{array}{l} \mul T^{**} \subset \ker R, \\
                                   \clos \{k\in\cdom T^*:\, R k \in \dom T^* \}=\cdom T^*.
           \end{array}
\right.
\end{equation*}
\end{remark}

\section{Pseudo-orthogonal Lebesgue type decompositions}\label{lebestype}

In this section the general notion of a pseudo-orthogonal
Lebesgue type decomposition for linear operators
or relations is developed. In \cite{HSS2018} the Lebesgue type decompositions of a
linear relation $T$ were always orthogonal. The new notion
allows a nontrivial intersection of the components;
cf.  Theorem \ref{overl}.

\begin{definition}\label{nonorth}
Let the relations $T$, $T_1$, and $T_2$ belong to $\bLR(\sH,\sK)$.
Then the sum decomposition
\begin{equation}\label{ltd}
  T=T_1+T_2 \quad \mbox{with} \quad \dom T=\dom T_1=\dom T_2,
\end{equation}
is called a pseudo-orthogonal Lebesgue type decomposition
if it is a pseudo-ortho\-gonal decomposition as in Definition {\rm \ref{nonorthsum}},
such that $T_1$ is closable and $T_2$ is singular.
\end{definition}

The following characterization of pseudo-orthogonal Lebesgue type decompositions is
a straightforward consequence of Theorem \ref{prelem0sum},
Lemma  \ref{LebtypelemA}, and Lemma \ref{LebtypelemB}.
Note that now the condition \eqref{sum3} is automatically satisfied.

\begin{theorem}\label{prelem0}
Let $T \in \bLR(\sH,\sK)$ be a linear relation.
Assume that  $K\in\mathbf{B}(\sK)$ is a nonnegative contraction
which satisfies
 \begin{equation}\label{comsum1a}
\clos \big\{k\in\cran (I-K):\, (I-K) k \in \dom T^* \big\}=\cran (I-K),
\end{equation}
 \begin{equation}\label{comsum1b}
 \ran K \cap \dom T^* \subset \ker T^*,
\end{equation}
and define
\begin{equation}\label{comsum2}
 T_1=(I-K)T \quad \mbox{and} \quad T_2=KT.
\end{equation}
Then the sum $T=T_1+T_2$ as in \eqref{ltd}
is a pseudo-orthogonal  Lebesgue type decomposition of $T$,
connected with  the pair $I-K$ and $K$ in the sense of Definition {\rm \ref{nonorth}}.

Conversely, let the sum $T=T_1+T_2$ in \eqref{ltd} be a pseudo-orthogonal Lebesgue type decomposition
of $T \in \bLR(\sH,\sK)$ in the sense of Definition {\rm \ref{nonorth}}. Then
there exists a nonnegative contraction $K \in \bB(\sK)$
such that \eqref{comsum1a}, \eqref{comsum1b},
and \eqref{comsum2} are satisfied.
\end{theorem}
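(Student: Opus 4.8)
The plan is to derive this theorem as an essentially formal consequence of three earlier results: Theorem \ref{prelem0sum} handles the passage between pseudo-orthogonal decompositions and the pair $(I-K,K)$, while Lemma \ref{LebtypelemA} and Lemma \ref{LebtypelemB} translate ``closable'' and ``singular'' for the products $(I-K)T$ and $KT$ into the conditions \eqref{comsum1a} and \eqref{comsum1b} respectively. The only genuinely new observation needed is that the side condition \eqref{sum3} of Theorem \ref{prelem0sum}, namely that $\mul T=(I-K)\,\mul T+K\,\mul T$ as a direct sum, is automatic here; this is the content of the remark ``Note that now the condition \eqref{sum3} is automatically satisfied'' preceding the statement, and I would begin by justifying it.

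First I would treat the forward direction. Assume $K$ satisfies \eqref{comsum1a} and \eqref{comsum1b}. By Lemma \ref{LebtypelemA}, condition \eqref{comsum1a} says precisely that $(I-K)T$ is closable; in particular \eqref{RTregb-} gives $\mul T^{**}\subset\ker(I-K)$, hence $\mul T\subset\ker(I-K)$, so $(I-K)\,\mul T=\{0\}$ and therefore $K\,\mul T=\mul T$. Consequently $\mul T=(I-K)\,\mul T+K\,\mul T$ holds and the sum is trivially direct, i.e. \eqref{sum3} is satisfied. Now Theorem \ref{prelem0sum} applies and yields that $T=T_1+T_2$ with $T_1=(I-K)T$, $T_2=KT$ is a pseudo-orthogonal decomposition of $T$ connected with the pair $I-K$ and $K$, with $\dom T=\dom T_1=\dom T_2$ as in \eqref{ltd}. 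Since $T_1$ is closable by the above, and $T_2=KT$ is singular by Lemma \ref{LebtypelemB} applied with $R=K$ (its criterion \eqref{KKsingQ} being exactly \eqref{comsum1b}), the decomposition is a pseudo-orthogonal Lebesgue type decomposition in the sense of Definition \ref{nonorth}, as required.

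For the converse, suppose $T=T_1+T_2$ as in \eqref{ltd} is a pseudo-orthogonal Lebesgue type decomposition. By Definition \ref{nonorth} it is in particular a pseudo-orthogonal decomposition, so the converse part of Theorem \ref{prelem0sum} produces a nonnegative contraction $K\in\bB(\sK)$ with $\mul T=(I-K)\,\mul T+K\,\mul T$ a direct sum and $T_1=(I-K)T$, $T_2=KT$, i.e. \eqref{comsum2} holds. Since $T_1=(I-K)T$ is closable, Lemma \ref{LebtypelemA} gives \eqref{RTreg} with $R=I-K$, which is exactly \eqref{comsum1a}; and since $T_2=KT$ is singular, Lemma \ref{LebtypelemB} with $R=K$ gives \eqref{KKsingQ}, which is exactly \eqref{comsum1b}. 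This $K$ therefore satisfies all of \eqref{comsum1a}, \eqref{comsum1b}, \eqref{comsum2}, completing the proof.

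I do not expect a serious obstacle: the work has essentially been front-loaded into Theorem \ref{prelem0sum} and the two product lemmas of Section \ref{Regsing}. The one point to watch is the logical direction in which \eqref{sum3} is used — in the forward direction it must be \emph{deduced} from closability of $(I-K)T$ (via \eqref{RTregb-}), not assumed — and the mild bookkeeping of applying Lemma \ref{LebtypelemA} with $R=I-K$ and Lemma \ref{LebtypelemB} with $R=K$ to the correct summands. Beyond that the argument is a matter of citing the previous results in the right order.
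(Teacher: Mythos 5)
Your proposal is correct and follows essentially the same route as the paper: verify that \eqref{sum3} holds automatically because closability of $(I-K)T$ forces $(I-K)\,\mul T=\{0\}$, then invoke Theorem \ref{prelem0sum} together with Lemma \ref{LebtypelemA} (with $R=I-K$) and Lemma \ref{LebtypelemB} (with $R=K$) in both directions. The only cosmetic difference is that the paper notes $\mul T_1=\{0\}$ directly from closability, while you route the same fact through \eqref{RTregb-}.
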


\begin{proof}
Let $K \in \bB(\sK)$ be a nonnegative contraction and assume that \eqref{comsum1a}
and \eqref{comsum1b} hold. Then $T_1=(I-K)T$ is a closable operator and $T_2=KT$
is a singular relation by  Lemma  \ref{LebtypelemA} and Lemma  \ref{LebtypelemB}.
Hence $\mul T_1=\{0\}$ so that \eqref{sum3}
is satisfied. By Theorem \ref{prelem0sum} $T=T_1+T_2$ is a pseudo-orthogonal decomposition,
which is a pseudo-orthogonal Lebesgue type decomposition according to Definition \ref{nonorth}.

Conversely, let $T=T_1+T_2$ be a pseudo-orthogonal Lebesgue type decomposition.
Hence, by definition it is a pseudo-orthogonal decomposition,
where $T_1$ is closable and $T_2$ is singular.
According to Theorem \ref{prelem0sum} there
exists a nonnegative contraction $K \in \bB(\sK)$ for which
the identities in \eqref{sum3} (trivially, since $\mul T_1=\{0\}$) and \eqref{comsum2} hold.
In fact, by Lemma \ref{LebtypelemA} and Lemma  \ref{LebtypelemB} the assertions
in \eqref{comsum1a} and \eqref{comsum1b}
follow.
\end{proof}

The sum decomposition \eqref{ltd} in Definition \ref{nonorth}
is said to be an \textit{orthogonal Lebesgue type decomposition}  if
it is an orthogonal decomposition as in Definition \ref{orthsum},
such that $T_1$ is closable and $T_2$ singular.
Hence, the following characterization of orthogonal Lebesgue type decompositions is
a direct consequence of Theorem \ref{prelem0}, Lemma  \ref{LebtypelemA},
and  Lemma  \ref{LebtypelemB};  cf. \cite{HSS2018}.

\begin{corollary} \label{prelem1}
Let $T \in \bLR(\sH,\sK)$ be a linear relation.
 Assume that  $P \in\mathbf{B}(\sK)$ is an orthogonal projection
 which satisfies
\begin{equation}\label{comsum1ap}
  \clos ( \ker P \cap \dom T^*) =\ker P,
\end{equation}
\begin{equation}\label{comsum1bp}
 \ran P \cap \dom T^* \subset \ker T^*,
\end{equation}
and define
 \begin{equation}\label{comsum2p}
 T_1=(I-P)T \quad \mbox{and} \quad T_2=PT.
\end{equation}
Then  the sum  $T=T_1+T_2$ as in \eqref{ltd}
is an orthogonal Lebesgue type decomposition of $T$,
connected with the pair  $I-P$ and $P$.

Conversely, let the sum $T=T_1+T_2$ in \eqref{ltd} be an orthogonal  Lebesgue type decomposition
of $T \in \bLR(\sH,\sK)$.
Then there exists an orthogonal projection $P \in \bB(\sK)$
such that \eqref{comsum1ap}, \eqref{comsum1bp},
and \eqref{comsum2p} are satisfied.
\end{corollary}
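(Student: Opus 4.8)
The plan is to deduce Corollary \ref{prelem1} directly from Theorem \ref{prelem0} together with the two structural lemmas on products $RT$, specializing the nonnegative contraction $K$ to an orthogonal projection $P$. The point is that everything needed has already been set up; one only has to check that in the orthogonal case the general conditions \eqref{comsum1a} and \eqref{comsum1b} reduce to \eqref{comsum1ap} and \eqref{comsum1bp}, and that a pseudo-orthogonal decomposition with a projection-valued $K$ is automatically orthogonal in the sense of Definition \ref{orthsum}.

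First I would treat the forward direction. Let $P\in\bB(\sK)$ be an orthogonal projection satisfying \eqref{comsum1ap} and \eqref{comsum1bp}. Since $P$ is a projection, $I-P$ is also a projection, $\cran(I-P)=\ran(I-P)=\ker P$, and $(I-P)k\in\dom T^*$ with $k\in\ker P$ simply means $k\in\ker P\cap\dom T^*$; hence \eqref{comsum1ap} is exactly the specialization of \eqref{comsum1a}, and \eqref{comsum1bp} is literally \eqref{comsum1b}. Therefore Theorem \ref{prelem0} applies with $K=P$: the relations $T_1=(I-P)T$ and $T_2=PT$ as in \eqref{comsum2p} give a pseudo-orthogonal Lebesgue type decomposition $T=T_1+T_2$ connected with the pair $I-P$ and $P$. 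It remains to note that the pair of operator range spaces generated by the projections $I-P$ and $P$ are just the closed subspaces $\ker P$ and $\ran P$ with the inherited inner product (by \eqref{iiippp} and Lemma \ref{Klemma}), so $\sK=\sX\oplus\sY$ is the orthogonal decomposition $\ker P\oplus\ran P$; since $\ran T_1\subset\ran(I-P)=\sX$ and $\ran T_2\subset\ran P=\sY$, the decomposition is orthogonal in the sense of Definition \ref{orthsum}, hence an orthogonal Lebesgue type decomposition as claimed.

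For the converse, suppose $T=T_1+T_2$ as in \eqref{ltd} is an orthogonal Lebesgue type decomposition. By definition this is in particular a pseudo-orthogonal Lebesgue type decomposition, so Theorem \ref{prelem0} produces a nonnegative contraction $K\in\bB(\sK)$ with \eqref{comsum1a}, \eqref{comsum1b}, \eqref{comsum2}. The only thing to verify is that $K$ can be taken to be an orthogonal projection. This follows because the underlying decomposition of $\sK$ is orthogonal: the contractive operator range spaces $\sX$ and $\sY$ are closed subspaces that are isometrically (not merely contractively) contained in $\sK$, which by Lemma \ref{Klemma} (or Proposition \ref{ranv}/\ref{ranw}, via the triviality of the overlapping space) forces the associated contractions $X=I-K$ and $Y=K$ to be orthogonal projections. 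Setting $P=K$ and invoking the identification of conditions in the previous paragraph (now read in reverse) yields \eqref{comsum1ap}, \eqref{comsum1bp}, and \eqref{comsum2p}.

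The only step that requires a little care — and the one I would regard as the main (minor) obstacle — is the converse bookkeeping: one must make sure that the $K$ delivered by Theorem \ref{prelem0sum}/Theorem \ref{prelem0} is the \emph{same} operator that governs the given orthogonal decomposition of $\sK$, so that its being a projection is inherited from the orthogonality hypothesis rather than needing a separate argument. In the proof of Theorem \ref{prelem0sum} the contraction is constructed as $K=Y$ where $Y$ is precisely the contraction associated with the summand $\sY$ of the pseudo-orthogonal decomposition $\sK=\sX+\sY$; since in an orthogonal decomposition that $Y$ is by hypothesis the orthogonal projection onto $\sY$, no extra work is needed. Everything else is a direct quotation of already-proved statements, so the corollary follows.
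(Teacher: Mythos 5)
Your proposal is correct and follows exactly the route the paper intends: the paper states Corollary \ref{prelem1} as a direct consequence of Theorem \ref{prelem0} (with $K=P$), Lemma \ref{LebtypelemA}, and Lemma \ref{LebtypelemB}, and your argument simply fills in those details, including the correct observation that in the converse the contraction constructed in the proof of Theorem \ref{prelem0sum} is the operator $Y$ attached to the given orthogonal decomposition and hence is already a projection.
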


Let $P_0$ stand for   the orthogonal projection onto $\mul T^{**}$.
Then it is clear that the conditions \eqref{comsum1ap} and \eqref{comsum1bp}
in Corollary \ref{prelem1} are  satisfied and it follows that
 \begin{equation}\label{lebe}
 T=T_{\rm reg} + T_{\rm sing}, \quad T_{\rm reg}=(I-P_0)T, \quad T_{\rm sing}=P_0 T,
\end{equation}
is an orthogonal Lebesgue type decomposition of $T$.  Here the \textit{regular part}
$T_{\rm reg} $ is closable and the \textit{singular part}   $T_{\rm sing} $ is singular.
The decomposition in \eqref{lebe} is called the \textit{Lebesgue decomposition} of $T$; cf.
\cite{HSSS2007, J80, KO, Ota84, Ota87}.
The Lebesgue  decomposition in \eqref{lebe} shows the existence of
Lebesgue type decompositions of $T$.
Note that $T_{\rm reg}$ is bounded if and only if $\dom T^*$ is closed; cf. \cite{HSS2018}.

\medskip

Among all Lebesgue type decomposition of a linear relation $T \in \bLR(\sH,\sK)$
the Lebesgue decomposition in \eqref{lebe} is distinguished by the maximality
property of its regular part $T_{\rm reg}$.
Recall that for linear relations $S_1$ and $S_2$ from $\sH$ to $\sK$
one says that $S_1$ is \textit{dominated} (\textit{contractively dominated}) by $S_2$,
in notation $S_1 \prec S_2$ ($S_1 \prec_c S_2$), if $CS_2 \subset S_1$
for some bounded (contractive) operator $C \in \bB(\sK)$.
When $S_1$ and $S_2$ are operators this is equivalent to
$\dom S_2 \subset \dom S_1$ and
$\|S_1 h\| \leq c \|S_2 h\|$ for all $h \in \dom S_2$ for some $0< c$ ($0<c \leq 1$); cf. \cite{HS2015}
and \cite[Definition 8.1, Lemma 8.2]{HSS2018}.
The next result is a strengthening of the maximality property established earlier
for orthogonal Lebesgue type decompositions in \cite{HSS2018}
to the wider setting of pseudo-orthogonal Lebesgue type decompositions of $T$.

\begin{theorem}\label{dom}
Let $T \in \bLR(\sH,\sK)$ and let $T=T_1+T_2$ be
a pseudo-orthogonal Lebesgue type decomposition of $T$.
Then
\[
 T_1 \prec_c T_{\rm reg},
\]
that is, the regular part $T_{\rm reg}$ of the Lebesgue decomposition
is the maximal closable part of $T$, in the sense of domination,
among all pseudo-orthogonal Lebesgue type decompositions of $T$.
\end{theorem}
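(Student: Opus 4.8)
The plan is to realize both $T_1$ and $T_{\rm reg}$ in the form $(I-K)T$ with $K$ a nonnegative contraction, and then exhibit an explicit contraction $C \in \bB(\sK)$ with $CT_{\rm reg} \subset T_1$. By Theorem \ref{prelem0} the given pseudo-orthogonal Lebesgue type decomposition has $T_1=(I-K)T$ and $T_2=KT$ for some nonnegative contraction $K\in\bB(\sK)$ satisfying \eqref{comsum1a} and \eqref{comsum1b}. On the other hand, $T_{\rm reg}=(I-P_0)T$ with $P_0$ the orthogonal projection onto $\mul T^{**}$. Since $T_1$ is closable, Lemma \ref{LebtypelemA} (the assertion \eqref{RTregb-}) gives $\mul T^{**}\subset\ker(I-K)$, i.e.\ $(I-K)P_0=0$, equivalently $(I-K)(I-P_0)=I-K$. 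The natural candidate for the dominating contraction is therefore $C=I-K$ itself: one expects $(I-K)T_{\rm reg}=(I-K)(I-P_0)T=(I-K)T=T_1$, so that in fact $CT_{\rm reg}=T_1$ and not merely $CT_{\rm reg}\subset T_1$.

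Carrying this out, first I would record the identity $(I-K)(I-P_0)=I-K$ from closability of $T_1$, and note that $C:=I-K$ is a nonnegative contraction, hence contractive. Next I would compute $CT_{\rm reg}$: by definition of the product of a bounded operator with a relation, $C T_{\rm reg} = C(I-P_0)T = \big((I-K)(I-P_0)\big)T = (I-K)T = T_1$, where associativity of the product of bounded operators with a relation (domains all equal $\dom T$) is used. This yields $CT_{\rm reg}\subset T_1$ with $C$ contractive, which is exactly $T_1\prec_c T_{\rm reg}$ by the definition recalled just before the statement. The second sentence of the theorem — that $T_{\rm reg}$ is the maximal closable part — is then immediate: any $T_1$ appearing in a pseudo-orthogonal Lebesgue type decomposition is contractively dominated by $T_{\rm reg}$, and $T_{\rm reg}$ itself is closable and occurs in such a decomposition (the orthogonal one \eqref{lebe}), so it is a maximum for $\prec_c$ (hence also for $\prec$).

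The one point requiring a little care — the main (minor) obstacle — is the passage $(I-K)(I-P_0)T=(I-K)T$ at the level of \emph{relations}, not just ranges: I must check that the graphs coincide, i.e.\ that no spurious multivalued part is introduced or lost. Since all three relations $(I-P_0)T$, $T$, and the products have domain $\dom T$, and for $\{f,f'\}\in T$ the corresponding elements of $(I-K)(I-P_0)T$ and $(I-K)T$ are $\{f,(I-K)(I-P_0)f'\}$ and $\{f,(I-K)f'\}$, the equality $(I-K)(I-P_0)=(I-K)$ on $\sK$ gives the identity of graphs directly; the multivalued parts are handled by the general formula $\mul RT=R\,\mul T$ together with $(I-K)\,\mul T^{**}=\{0\}$ and $\mul T\subset\mul T^{**}$. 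Thus the argument reduces to the elementary operator identity $(I-K)(I-P_0)=I-K$, which is furnished by \eqref{RTregb-} of Lemma \ref{LebtypelemA} applied to $R=I-K$. I would also remark that when $K=P_0$ the domination is an equality of relations, recovering the orthogonal Lebesgue decomposition as a fixed point of this maximality.
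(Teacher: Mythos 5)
Your proof is correct, but it takes a genuinely different route from the paper. The paper argues in two steps: since $T_1=(I-K)T$ with $I-K$ a contraction, one has $T_1 \prec_c T$; then it invokes the general fact that contractive domination is preserved when passing to regular parts (Theorem 8.3 of \cite{HSS2018}), and since $T_1$ is closable it coincides with its own regular part, giving $T_1 \prec_c T_{\rm reg}$. You instead construct the dominating contraction explicitly: from closability of $T_1=(I-K)T$ and \eqref{RTregb-} of Lemma \ref{LebtypelemA} you get $\mul T^{**}\subset\ker(I-K)$, hence $(I-K)P_0=0$ and $(I-K)(I-P_0)=I-K$, so that $(I-K)T_{\rm reg}=(I-K)(I-P_0)T=(I-K)T=T_1$; this exhibits $C=I-K$ with $CT_{\rm reg}\subset T_1$ (in fact equality), which is precisely $T_1\prec_c T_{\rm reg}$. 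Your verification of the graph identity is sound, since the product of a bounded operator with a relation is defined elementwise, so the operator identity $(I-K)(I-P_0)=I-K$ transfers directly to the relations. What each approach buys: yours is self-contained within the paper's own lemmas, avoids the external reference to \cite{HSS2018}, and yields the sharper explicit identity $(I-K)T_{\rm reg}=T_1$ (with the Lebesgue decomposition recovered at $K=P_0$); the paper's argument is shorter and rests on a structural principle --- regularization preserves domination --- which has independent interest and applies beyond this particular setting.
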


\begin{proof}
In $T=T_1+T_2$ one has $T_1=(I-K)T$ for a nonnegative contraction
and note that $I-K$ is also a nonnegative contraction.
Hence one concludes $T_1 \prec_c T$. This domination
is preserved by their regular parts, see \cite[Theorem 8.3]{HSS2018}.
Since $T_1$ is closable, it is equal to its regular part and
it follows that $T_1 \prec_c T_{\rm reg}$.
\end{proof}

For a further consideration of Lebesgue type decompositions
the class of contractions in $\bB(\sK)$ will now be restricted to contractions of the form
\begin{equation}\label{konk}
 K=\begin{pmatrix} I & 0 \\ 0 & G \end{pmatrix}:
 \begin{pmatrix} \mul T^{**} \\ \cdom T^* \end{pmatrix} \to
 \begin{pmatrix} \mul T^{**} \\ \cdom T^* \end{pmatrix},
\end{equation}
where $G \in \bB(\cdom T^*)$ is a nonnegative contraction.
It follows from Theorem \ref{prelem0} that $K$ in \eqref{konk}
satisfies \eqref{comsum1a} if and only if
\begin{equation}\label{twee}
   \clos \big\{k\in\cran (I-G):\, (I-G) k \in \dom T^* \big\}=\cran (I-G),
\end{equation}
and $K$ satisfies \eqref{comsum1b} if and only if
\begin{equation}\label{een}
\ran G  \cap \dom T^* \subset \ker T^*.
\end{equation}
Conversely, any $G \in \bB(\cdom T^*)$ with the properties \eqref{twee} and \eqref{een}
gives via \eqref{konk} a nonnegative
contraction $K \in \bB(\sK)$ as in Theorem \ref{prelem0}.
The case $G=0$ corresponds to $K=P_0$, the orthogonal projection onto $\mul T^{**}$,
and gives the Lebesgue decomposition, while any orthogonal Lebesgue type
decomposition corresponds via \eqref{konk} to an orthogonal projection $G$
that satisfies \eqref{twee} and \eqref{een}.

\medskip

Now assume that $\dom T^*$ is not closed.
Let $\sL \subset \cdom T^* \setminus \dom T^*$
be a closed linear subspace of $\cdom T^*$ and decompose this space accordingly:
\[
\cdom T^*=(\cdom T^* \ominus \sL) \oplus \sL.
\]
This decomposition will be used in  the lemma below.
As to the existence of such subspaces $\sL$, recall
that $\dom T^*$ is an operator range space.
Hence one has $\dim (\cdom T^* \setminus \dom T^*)=\infty$;
see \cite[Corollary to Theorem 2.3]{FW}.
Therefore one may choose for any $n \in \dN$ an $n$-dimensional linear subspace
$\sL  \subset \cdom T^* \setminus \dom T^*$.
The following lemmas describe special classes of
nonnegative contractions $K \in \bB(\sK)$ that illustrate
several features discussed earlier.

\begin{lemma}\label{kontt}
Let $T \in \bLR(\sH,\sK)$ and  assume that $\dom T^*$ is not closed.
Let $\sL$ be a nontrivial closed linear subspace of $\cdom T^* \setminus \dom T^*$.
Let $H \in\mathbf{B}(\sL)$ be a nonnegative contraction,
then the operator $G$ defined by
\begin{equation}\label{driek}
 G=\begin{pmatrix} 0& 0 \\  0&  H \end{pmatrix}:
 \begin{pmatrix}  \cdom T^{*} \ominus \sL\\ \sL \end{pmatrix}
 \to
 \begin{pmatrix} \cdom T^{*} \ominus \sL \\ \sL \end{pmatrix},
\end{equation}
belongs to $\bB(\cdom T^*)$ and satisfies the condition \eqref{een}.
Assume in addition that $(I-H)^{-1} \in \bB(\sL)$, then
the operator $G$ in \eqref{driek} satisfies
the condition \eqref{twee}.
Hence $K$ in \eqref{konk} satisfies the conditions \eqref{comsum1ap} and
\eqref{comsum1bp}. Consequently, the sum $T=T_1+T_2$ with \eqref{comsum2}
is a pseudo-orthogonal Lebesgue type decomposition of $T$.
\end{lemma}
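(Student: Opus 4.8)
The claim has three successive assertions about the operator $G$ in \eqref{driek}: that it lies in $\bB(\cdom T^*)$ and satisfies \eqref{een}; that it additionally satisfies \eqref{twee} under the hypothesis $(I-H)^{-1}\in\bB(\sL)$; and that consequently $K$ in \eqref{konk} satisfies the two conditions in Theorem \ref{prelem0}, producing a pseudo-orthogonal Lebesgue type decomposition. (I note in passing that the statement writes \eqref{comsum1ap} and \eqref{comsum1bp}, but these are clearly the conditions \eqref{comsum1a} and \eqref{comsum1b}; I will silently work with the latter.) The strategy is to treat the three assertions in order, the first two being direct computations with the block form, and the third being an immediate application of Theorem \ref{prelem0} once the reductions \eqref{twee}--\eqref{een} have been verified.

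\textbf{Step 1: $G\in\bB(\cdom T^*)$ and condition \eqref{een}.} Since $H$ is a nonnegative contraction on $\sL$ and $G$ acts as $0$ on $\cdom T^*\ominus\sL$ and as $H$ on $\sL$, it is clear that $G$ is a bounded nonnegative contraction on $\cdom T^*$; in particular $\ran G=\ran H\subset\sL$. For \eqref{een} I must show $\ran G\cap\dom T^*\subset\ker T^*$. Take $x\in\ran G\cap\dom T^*$. Then $x\in\sL$, but by hypothesis $\sL\cap\dom T^*=\{0\}$ (since $\sL\subset\cdom T^*\setminus\dom T^*$), hence $x=0\in\ker T^*$ trivially; thus \eqref{een} holds with the left-hand side being $\{0\}$.

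\textbf{Step 2: condition \eqref{twee} under $(I-H)^{-1}\in\bB(\sL)$.} Here $I-G$ in block form is $\mathrm{diag}(I_{\cdom T^*\ominus\sL},\,I-H)$, so if $(I-H)^{-1}\in\bB(\sL)$ then $I-G$ is boundedly invertible on $\cdom T^*$, hence $\cran(I-G)=\cdom T^*$. The condition \eqref{twee} then reads $\clos\{k\in\cdom T^*:\,(I-G)k\in\dom T^*\}=\cdom T^*$. Because $(I-G)$ maps $\cdom T^*$ bijectively onto itself, the set in question is exactly $(I-G)^{-1}(\ran(I-G)\cap\dom T^*)=(I-G)^{-1}(\cdom T^*\cap\dom T^*)=(I-G)^{-1}(\dom T^*)$. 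Since $\dom T^*$ is dense in $\cdom T^*$ and $(I-G)^{-1}$ is a bounded bijection of $\cdom T^*$, the image $(I-G)^{-1}(\dom T^*)$ is again dense in $\cdom T^*$, which is exactly \eqref{twee}. (This is in fact a special case of the last assertion of Lemma \ref{LebtypelemA}, applied with the operator $I-G$ in place of $R$, but the direct argument is just as short.)

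\textbf{Step 3: conclusion.} With \eqref{een} and \eqref{twee} established, the discussion preceding the lemma (the equivalences between \eqref{konk} satisfying \eqref{comsum1a}, \eqref{comsum1b} and $G$ satisfying \eqref{twee}, \eqref{een}) shows that $K$ in \eqref{konk} is a nonnegative contraction in $\bB(\sK)$ satisfying \eqref{comsum1a} and \eqref{comsum1b}. Theorem \ref{prelem0} then yields that $T=T_1+T_2$ with $T_1=(I-K)T$, $T_2=KT$ as in \eqref{comsum2} is a pseudo-orthogonal Lebesgue type decomposition of $T$, which is the assertion. I expect no real obstacle here; the only point requiring a little care is Step 2, namely recognizing that the bounded invertibility of $I-H$ on $\sL$ makes $I-G$ boundedly invertible on all of $\cdom T^*$ (the block on $\cdom T^*\ominus\sL$ being the identity), so that density of $\dom T^*$ is transported to density of its preimage — everything else is bookkeeping with the block decompositions.
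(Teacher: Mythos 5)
Your proposal is correct and follows essentially the same route as the paper: $\ran G\subset\sL$ forces $\ran G\cap\dom T^*=\{0\}$, giving \eqref{een}, and the bounded invertibility of $I-H$ makes $I-G$ boundedly invertible on $\cdom T^*$, so that the set in \eqref{twee} is $(I-G)^{-1}\dom T^*$, which is dense. Your reading of the labels (\eqref{comsum1a}, \eqref{comsum1b} in place of the misquoted \eqref{comsum1ap}, \eqref{comsum1bp}) and the final appeal to Theorem \ref{prelem0} match the paper's intent exactly.
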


\begin{proof}
Let $G$ be as in \eqref{driek}. Then $\ran G \subset \sL$, so that
$\ran G \cap \dom T^*=\{0\}$ by the definition of $\sL$.
Hence, the condition \eqref{een} is automatically satisfied.
Furthermore, one sees from the condition $(I-H)^{-1} \in \bB(\sL)$
that $I-G \in \bB(\cdom T^*)$ is invertible. Thus the linear space
$(I-G)^{-1} \dom T^*$ is dense in $\cdom T^*$.  Therefore \eqref{twee} is satisfied
if $(I-H)^{-1} \in \bB(\sL)$.
\end{proof}

The next lemma goes back to \cite{HSS2018}.

\begin{lemma}\label{kondt}
Let $T \in \bLR(\sH,\sK)$ and  assume that $\dom T^*$ is not closed.
Let $\sL$ be a finite-dimensional linear subspace of $\cdom T^* \setminus \dom T^*$.
Let $H \in \bB(\sL)$ be an orthogonal projection.
Then the operator $K \in \bB(\sK)$, defined by \eqref{konk} and \eqref{driek},
 is an orthogonal  projection $K=P$
 which satisfies the conditions \eqref{comsum1ap} and
\eqref{comsum1bp}.  Consequently,  the sum $T=T_1+T_2$ with \eqref{comsum2p}
is an orthogonal Lebesgue type decomposition of $T$.
\end{lemma}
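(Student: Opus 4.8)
The goal is to verify that with $\sL$ finite-dimensional and $H\in\bB(\sL)$ an orthogonal projection, the operator $K$ built from \eqref{konk} and \eqref{driek} is an orthogonal projection satisfying \eqref{comsum1ap} and \eqref{comsum1bp}, so that Corollary \ref{prelem1} applies. The first step is to observe that $K$ is an orthogonal projection: in the block decomposition $\sK = \mul T^{**} \oplus (\cdom T^* \ominus \sL) \oplus \sL$, the operator $K$ acts as $I$ on the first block, $0$ on the second, and $H$ on the third; since $H$ is an orthogonal projection on $\sL$, the whole operator $K$ is self-adjoint and idempotent, hence an orthogonal projection $K = P$. Its range is $\mul T^{**} \oplus \ran H$, where $\ran H \subset \sL$, and $\ker P = (\cdom T^* \ominus \sL) \oplus \ker H$.

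The second step is to check condition \eqref{comsum1bp}, i.e. $\ran P \cap \dom T^* \subset \ker T^*$. Since $\ran H \subset \sL$ and $\sL \cap \dom T^* = \{0\}$ by the choice of $\sL \subset \cdom T^* \setminus \dom T^*$, one gets $\ran H \cap \dom T^* = \{0\}$; as $\ran P = \mul T^{**} \oplus \ran H$ and $\mul T^{**} = (\dom T^*)^\perp$, a vector in $\ran P \cap \dom T^*$ must lie in $\mul T^{**} \cap \dom T^*$, which is the trivial common part, hence it is in $\mul T^{**}$. Actually the cleanest route is to note that this is exactly the statement that $G$ in \eqref{driek} satisfies \eqref{een}, which was already proved in Lemma \ref{kontt}, and then \eqref{comsum1bp} follows from the equivalence in Theorem \ref{prelem0} between $K$ satisfying \eqref{comsum1b} and $G$ satisfying \eqref{een} (noting that for an orthogonal projection \eqref{comsum1b} and \eqref{comsum1bp} coincide). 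So this step essentially cites Lemma \ref{kontt}.

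The third step, condition \eqref{comsum1ap}, i.e. $\clos(\ker P \cap \dom T^*) = \ker P$, is where one cannot simply invoke Lemma \ref{kontt}, because there the hypothesis $(I-H)^{-1} \in \bB(\sL)$ was used, whereas an orthogonal projection $H$ need not be invertible-minus-identity. Here I would use finite-dimensionality of $\sL$ crucially. Write $\ker P = (\cdom T^* \ominus \sL) \oplus \ker H$, a closed subspace of $\cdom T^*$. One has to show $\dom T^*$ is dense in it. Since $\dom T^*$ is dense in $\cdom T^*$ and $\sL$ is finite-dimensional, $\dom T^* \cap (\cdom T^* \ominus \sL)$ has codimension at most $\dim \sL$ in $\dom T^*$ and is dense in $\cdom T^* \ominus \sL$; more to the point, the reasoning in \cite{HSS2018} (which Lemma \ref{kondt} cites) shows that removing a finite-dimensional subspace from the domain-closure of a densely defined operator leaves a dense domain on the orthogonal complement. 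Concretely: for $\ker P$ to have a dense intersection with $\dom T^*$, project $\dom T^*$ onto $\ker P$ along $\sL \ominus \ker H = \ran H$; since $\ran H$ is finite-dimensional, this oblique projection maps $\dom T^*$ to a dense subset of $\ker P$ that still lies in $\dom T^*$ (one must check the projected vectors stay in $\dom T^*$, which holds because the correction term lies in $\ran H \subset \sL \subset \cdom T^*$ — here one needs that $\dom T^* + \sL = \dom T^* \oplus \sL$ as the sum is direct and both summands have the relevant algebraic structure).

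**Main obstacle.** The genuinely delicate point is the third step: establishing \eqref{comsum1ap} without invertibility of $I-H$, relying instead on $\dim \sL < \infty$. One must carefully argue that the finite-dimensional "defect" $\ran H$ can be split off without destroying density of $\dom T^*$ in the remaining closed subspace $\ker P$ — in particular that an element $x \in \dom T^*$ decomposed along $\ker P \oplus \ran H$ has its $\ker P$-component still inside $\dom T^*$, which uses $\ran H \subset \sL$ together with the direct-sum property $\dom T^* \cap \sL = \{0\}$. Everything else (that $K$ is an orthogonal projection, and \eqref{comsum1bp}) is routine block-operator bookkeeping or a direct citation of Lemma \ref{kontt} and \cite{HSS2018}.
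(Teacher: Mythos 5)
Your steps (1) and (2) are sound: with respect to the orthogonal decomposition $\sK=\mul T^{**}\oplus(\cdom T^{*}\ominus\sL)\oplus\sL$ the operator $K$ is block diagonal with entries $I$, $0$, $H$, hence an orthogonal projection $P$, and \eqref{comsum1bp} holds because an element of $\ran P\cap\dom T^{*}=(\mul T^{**}\oplus\ran H)\cap\dom T^{*}$ is orthogonal to $\mul T^{**}$ and lies in $\sL\cap\dom T^{*}=\{0\}$; equivalently, this is the first half of the proof of Lemma \ref{kontt}, which indeed does not use the invertibility of $I-H$. Note that the paper gives no proof of Lemma \ref{kondt} at all (it only cites \cite{HSS2018}), so the whole weight of your argument rests on step (3), which you correctly identify as the delicate point.

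However, your argument for \eqref{comsum1ap} fails as written. You propose to map $x\in\dom T^{*}$ to its projection onto $\ker P=\cdom T^{*}\ominus\ran H$ along $\ran H$, i.e.\ to $x-P_{\ran H}x$ (where $P_{\ran H}$ is the orthogonal projection onto $\ran H$), and you claim this vector stays in $\dom T^{*}$ because the correction term lies in $\ran H\subset\sL$. The directness $\dom T^{*}\cap\sL=\{0\}$ gives exactly the opposite: if both $x$ and $x-P_{\ran H}x$ belong to $\dom T^{*}$, then $P_{\ran H}x\in\dom T^{*}\cap\sL=\{0\}$, so your projection lands in $\dom T^{*}$ only in the trivial case, and it does not produce any subset of $\dom T^{*}\cap\ker P$. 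The finite dimensionality must be used differently: the correction vectors have to be taken inside $\dom T^{*}$. Since $P_{\ran H}$ is continuous and $\dom T^{*}$ is dense in $\cdom T^{*}$, the image $P_{\ran H}(\dom T^{*})$ is a dense, hence full, subspace of the finite-dimensional space $\ran H$; choose $d_{1},\dots,d_{m}\in\dom T^{*}$ with $P_{\ran H}d_{i}=e_{i}$, an orthonormal basis of $\ran H$, and for $x\in\dom T^{*}$ set $x'=x-\sum_{i}(x,e_{i})d_{i}$, so that $x'\in\dom T^{*}\cap\ker P$. If $y\in\ker P$ and $x_{n}\in\dom T^{*}$ with $x_{n}\to y$, then $(x_{n},e_{i})\to(y,e_{i})=0$, whence $x_{n}'\to y$; this proves $\clos(\ker P\cap\dom T^{*})=\ker P$, i.e.\ \eqref{comsum1ap}, and is the argument behind the corresponding statement in \cite{HSS2018}. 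With this replacement your outline goes through; as it stands, step (3) is a genuine gap.
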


Lemma \ref{kontt}  and Lemma \ref{kondt} answer questions about the existence
of Lebesgue type decompositions, different from the Lebesgue decomposition:
 when $\dom T^*$ is not closed there are infinitely many different Lebesgue type
decompositions of $T$, both pseudo-orthogonal and orthogonal.
A necessary and sufficient condition for the uniqueness of the Lebesgue decomposition
among all pseudo-orthogonal Lebesgue type decompositions
(thus including the orthogonal ones) is given in the next theorem.

\begin{theorem} \label{uniq}
Let $T \in \bLR(\sH,\sK)$,  then the following statements are equivalent:
\begin{enumerate}\def\labelenumi{\rm(\roman{enumi})}
\item  the Lebesgue decomposition of $T$
is the only pseudo-orthogonal  Lebesgue type decomposition of $T$;
\item  $\dom T^*$ is closed.
\end{enumerate}
\end{theorem}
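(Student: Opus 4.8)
The plan is to prove the two implications separately, using the machinery already developed.

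For the direction (ii) $\Rightarrow$ (i), I would argue as follows. Suppose $\dom T^*$ is closed. By Corollary \ref{Lebtypelem1}(b), for a nonnegative contraction $R \in \bB(\sK)$ the product $RT$ is closable if and only if $\ran R \subset \dom T^*$. Now let $T = T_1 + T_2$ be any pseudo-orthogonal Lebesgue type decomposition; by Theorem \ref{prelem0} there is a nonnegative contraction $K \in \bB(\sK)$ with $T_1 = (I-K)T$ closable and $T_2 = KT$ singular, satisfying \eqref{comsum1a} and \eqref{comsum1b}. Applying Corollary \ref{Lebtypelem1}(b) to $R = I-K$ gives $\ran (I-K) \subset \dom T^*$, hence $\cran(I-K) \subset \dom T^*$ since $\dom T^*$ is closed. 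On the other hand, by Lemma \ref{LebtypelemB}, singularity of $KT$ with the closed $\dom T^*$ forces $\ran K \cap \dom T^* \subset \ker T^*$; I would want to combine this with the closability condition to pin down $K$. The key observation is that $\mul T^{**} = (\dom T^*)^\perp$, and the condition \eqref{RTregb-} gives $\mul T^{**} \subset \ker(I-K)$, i.e. $\mul T^{**} \subset \ker(I-K)$, so $(I-K)$ restricted to $\mul T^{**}$ is zero, meaning $K$ acts as the identity on $\mul T^{**}$. It remains to show $K$ acts as zero on $\cdom T^* = \dom T^*$. For $k \in \cdom T^*= \dom T^*$ one has $Kk \in \dom T^*$ automatically (since $\dom T^*$ is closed and $K$ maps it... ) — here I would use that $\ran K \cap \dom T^* \subset \ker T^*$ together with $\ran(I-K)\subset \dom T^*$: for any $h \in \dom T^*$, write $h = (I-K)h + Kh$; since $(I-K)h \in \dom T^*$, also $Kh \in \dom T^*$, hence $Kh \in \ran K \cap \dom T^* \subset \ker T^*$. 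Thus $K$ maps $\dom T^*$ into $\ker T^*$. Applying this with $h$ ranging over $\ker T^*$-complement inside $\dom T^*$ and using self-adjointness of $K$ and nonnegativity, one forces $K \dom T^* \subset \ker T^*$ and then a Cauchy--Schwarz / positivity argument shows $K|_{\dom T^* \ominus \ker T^*}=0$; combined with $K|_{\ker T^*}$ being a nonnegative contraction into $\ker T^* \subset \dom T^*$ one eventually concludes $K = P_0$, the projection onto $\mul T^{**}$, giving the Lebesgue decomposition \eqref{lebe}.

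For the direction (i) $\Rightarrow$ (ii), I would prove the contrapositive. Assume $\dom T^*$ is not closed. Then by the remark following \eqref{konk} (the dimension count via \cite[Corollary to Theorem 2.3]{FW}) there exists a nontrivial, even finite-dimensional, closed subspace $\sL \subset \cdom T^* \setminus \dom T^*$. Apply Lemma \ref{kontt}: choosing any nonnegative contraction $H \in \bB(\sL)$ with $(I-H)^{-1} \in \bB(\sL)$ — for instance $H = \tfrac12 I_\sL$ — produces via \eqref{driek} and \eqref{konk} a nonnegative contraction $K \in \bB(\sK)$ satisfying \eqref{comsum1a} and \eqref{comsum1b}, hence by Theorem \ref{prelem0} a pseudo-orthogonal Lebesgue type decomposition $T = (I-K)T + KT$. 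This $K$ is not equal to $P_0$: indeed $K$ acts nontrivially (as $H$) on $\sL \subset \cdom T^*$, whereas $P_0$ annihilates all of $\cdom T^*$. Since the decomposition data $K$ is uniquely determined by the decomposition when $\cran T = \sK$ — and in general one still has $(I-K)T \neq T_{\rm reg}$ because $\ran(I-K)T$ is strictly larger than $\ran(I-P_0)T = \ran T_{\rm reg}$ (the extra directions coming from $\sL$) — the decomposition obtained is genuinely different from the Lebesgue decomposition \eqref{lebe}. Hence (i) fails.

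The main obstacle I anticipate is the first direction, specifically showing rigorously that closedness of $\dom T^*$ forces $K = P_0$ and not merely that every such $K$ yields a closable regular part. One must rule out nonprojection contractions $K$ supported partly on $\dom T^*$: the argument sketched above (decompose $h = (I-K)h + Kh$, deduce $Kh \in \ker T^*$ for all $h \in \dom T^*$, then use that $K$ is a nonnegative self-adjoint contraction together with $\ker T^* \subset \dom T^*$ and $\ran(I-K) \subset \dom T^*$) needs to be carried out carefully: one shows $\dom T^* = \cdom T^*$ is $K$-invariant, that $K$ maps it into the closed subspace $\ker T^*$, and that on $\dom T^* \ominus \ker T^*$ the operator $K$ is simultaneously nonnegative and "lands in an orthogonal subspace", forcing it to vanish there; then $\mul T^{**} \subset \ker(I-K)$ forces $K = I$ on $\mul T^{**}$; assembling these, $K$ is the orthogonal projection onto $\mul T^{**}$. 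A secondary subtlety is handling the non-minimal case ($\cran T \subsetneq \sK$) where $K$ is not unique — there one argues at the level of the relations $T_1, T_2$ themselves rather than at the level of $K$, showing $\mul T_1 = \{0\}$ and that $T_1 = (I-P_0)T$ regardless.
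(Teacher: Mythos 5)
Your strategy coincides with the paper's (contraposition via Lemma \ref{kontt} for (i) $\Rightarrow$ (ii); for (ii) $\Rightarrow$ (i), Corollary \ref{Lebtypelem1}(b) plus the singularity condition \eqref{comsum1b} applied to the contraction $K$ furnished by Theorem \ref{prelem0}), but in both directions the final step, as written, overreaches. In (ii) $\Rightarrow$ (i) you aim at the conclusion $K=P_0$. That is false in general: the facts you correctly derive ($K=I$ on $\mul T^{**}$ from \eqref{RTregb-}, and $K=0$ on $\dom T^*\ominus\ker T^*=\dom T^*\cap\cran T$ by your positivity argument) leave $K$ completely undetermined on $\ker T^*$, and indeed when $\cran T\neq\sK$ many different contractions satisfy \eqref{comsum1a}--\eqref{comsum1b}; $K=P_0$ can only be forced under minimality. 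What the theorem needs, and what the paper proves, is only the relation-level identity $(I-K)T=(I-P_0)T$ and $KT=P_0T$. This follows in one line from your two facts: for $g\in\ran T$ one has $(I-P_0)g\in\dom T^*\cap\cran T$ (since $\mul T^{**}\subset\cran T$ and $\dom T^*$ is closed), hence $Kg=KP_0g+K(I-P_0)g=P_0g$. You flag this as a ``secondary subtlety'' for the non-minimal case, but it is the main point; with the line above your ingredients do close it.

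In (i) $\Rightarrow$ (ii) the justification that the decomposition produced by Lemma \ref{kontt} differs from the Lebesgue decomposition \eqref{lebe} is not valid as stated: $K\neq P_0$ is insufficient for exactly the reason above (different contractions can induce the same decomposition $T=T_1+T_2$), and the claim that $\ran (I-K)T$ is ``strictly larger'' than $\ran T_{\rm reg}$ is unsubstantiated -- these ranges need not even be comparable. A correct argument with your choice $H=\tfrac12 I_\sL$ (so $H$ injective and $(I-H)^{-1}\in\bB(\sL)$): if $(I-K)T=(I-P_0)T$, then since both are operators on $\dom T$, every $\{f,g\}\in T$ gives $(I-K)g=(I-P_0)g$, i.e.\ $HP_\sL g=0$, hence $P_\sL g=0$; thus $\sL\subset(\ran T)^\perp=\ker T^*\subset\dom T^*$, contradicting $\sL\cap\dom T^*=\{0\}$ and $\sL\neq\{0\}$. (The paper leaves this verification implicit in Lemmas \ref{kontt} and \ref{kondt}, but the explicit reasons you give are the wrong ones.) With these two repairs your proposal becomes a correct proof along the same lines as the paper's.
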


\begin{proof}
(i) $\Rightarrow$ (ii)
Assume that  $\dom T^*$ is not closed.
According to Lemma  \ref{kontt} and Lemma \ref{kondt}
there exist Lebesgue type decompositions of $T$, which are
pseudo-orthogonal or orthogonal,
which are different from the Lebesgue decomposition.
This contradiction shows (ii).

(ii) $\Rightarrow$ (i)
Assume that $\dom T^*$ is closed.
Let $T=(I-K)T+KT$ have a Lebesgue type decomposition
\eqref{comsum2}, where $K$ is a nonnegative contraction; cf. Corollary \ref{Lebtypelem1}.
Then with the convention \eqref{konk} one has $\ran G \subset \dom T^*$ which, combined
 with \eqref{een}, leads to $\ran G \subset \ker T^*$ or, equivalently,  $\ran T \subset \ker G$.
It follows from \eqref{konk}  that
\[
 K-P_0=\begin{pmatrix} 0 & 0 \\ 0 & G \end{pmatrix}.
\]
Therefore the following identities
\[
 (I-K)T=(I-P_0)T=T_{\rm reg} \quad \mbox{and} \quad \quad KT=P_0T=T_{\rm sing}
\]
are clear.
Thus the pseudo-orthogonal decomposition of $T$ corresponding to $K$
coincides with the Lebesgue decomposition.
\end{proof}

Theorem \ref{uniq} is a strengthening of the corresponding result in \cite{HSS2018}
from the case of orthogonal Lebesgue type decompositions to the case of
pseudo-orthogonal Lebesgue type decompositions.
The uniqueness condition in (ii) is equivalent to the condition that the operator
$T_{\rm reg}$ is bounded, see  \cite{HSS2018}.
To see this equivalence, recall that  $\dom T^{*}$ is closed if and only if
$\dom T^{**}$ is closed, while
\[
\dom T^{**}=\dom (T^{**})_{\rm reg}=\dom (T_{\rm reg})^{**}.
\]
The original statement of such a uniqueness result
in the setting of pairs of nonnegative bounded operators goes back to
Ando \cite{An}. In \cite{ST} there is an extensive treatment
of the uniqueness question
in the context of forms, including a list of the relevant literature.

\medskip

Finally, it should be observed that  Lemma \ref{kontt}  provides some
concrete examples for nontrivial intersection of the components in a Lebesgue type
decomposition.

\begin{corollary}\label{kont}
Assume the conditions in Lemma {\rm \ref{kontt}}
and let $T=T_1+T_2$ be the corresponding Lebesgue type decomposition.
Then the following statements hold.
\begin{enumerate} \def\labelenumi{\rm(\alph{enumi})}
\item
The intersection of $\ran T_1$ and  $\ran T_2$ satisfies
\begin{equation}\label{aaaa}
\ran T_1 \cap \ran T_2 = \{0\}_{\sK\ominus\sL}\oplus  H(I-H) P_\sL \sM,
\end{equation}
where $\sM$ is given by \eqref{ell} and $P_\sL$ is the orthogonal projection onto $\sL$.
\item
If $\cran T=\sK$, then the intersection of $\cran T_1$ and $\cran T_2$ satisfies
\begin{equation}\label{aaaab}
\cran T_1\cap \cran T_2=\{0\}_{\sK\ominus\sL}\oplus \cran H.
\end{equation}
Consequently,  if $H \neq 0$ then $\cran T_1\cap \cran T_2\neq \{0\}$.
\end{enumerate}
\end{corollary}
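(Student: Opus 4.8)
The plan is to apply the two general identities on intersections of ranges, namely Theorem~\ref{overl} and Lemma~\ref{lap}, to the specific contraction $K$ built in Lemma~\ref{kontt}, and then to simplify the resulting expressions using the block structure of $K$ given by \eqref{konk} and \eqref{driek}. Throughout I would write $X=I-K$ and $Y=K$, so that $T_1=XT$ and $T_2=YT$, and recall that these satisfy the hypotheses of Theorem~\ref{overl} (the product decomposition $T=T_1+T_2$ holds because $\mul T_1=\{0\}$ for the Lebesgue type decomposition, so \eqref{sum3} is trivially satisfied).

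For part (a), Theorem~\ref{overl} gives $\ran T_1\cap\ran T_2 = XY\,\sM$ with $\sM$ as in \eqref{ell}. So the task reduces to computing the operator $XY=(I-K)K$ in the block decomposition $\sK=\mul T^{**}\oplus(\cdom T^*\ominus\sL)\oplus\sL$. On $\mul T^{**}$ one has $K=I$, hence $(I-K)K=0$ there; on $\cdom T^*\ominus\sL$ one has $K=0$, hence again $(I-K)K=0$; and on $\sL$ one has $K=H$, so $(I-K)K=(I-H)H=H(I-H)$ (these commute). Thus $XY$ is the block operator that is zero on $\sK\ominus\sL$ and equals $H(I-H)$ on $\sL$, i.e.\ $XY\,\sM = \{0\}_{\sK\ominus\sL}\oplus H(I-H)P_\sL\sM$, which is exactly \eqref{aaaa}. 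Here I would just need to observe that, since $XY$ annihilates the orthogonal complement of $\sL$, applying $XY$ to $\sM$ is the same as first projecting $\sM$ onto $\sL$ by $P_\sL$ and then applying $H(I-H)$; no further analysis of $\sM$ itself is required.

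For part (b), under the minimality assumption $\cran T=\sK$, Lemma~\ref{lap} gives $\cran T_1\cap\cran T_2 = \cran XY = \cran\big((I-K)K\big)$. Using the same block computation, $\cran XY$ is the closure of the range of the operator that is zero on $\sK\ominus\sL$ and equals $H(I-H)$ on $\sL$; hence $\cran XY = \{0\}_{\sK\ominus\sL}\oplus\cran(H(I-H))$. The remaining point is that $\cran\big(H(I-H)\big)=\cran H$ inside $\sL$: this follows from Lemma~\ref{Klemma} applied to the pair of nonnegative contractions $H$ and $I-H$ in $\bB(\sL)$, which gives $\cran H\cap\cran(I-H)=\cran\big(H(I-H)\big)$, together with $\cdom T^*$ being chosen so that $I-H$ is injective with dense range when $(I-H)^{-1}\in\bB(\sL)$ — wait, that is the hypothesis of Lemma~\ref{kontt}, so in fact $I-H$ is boundedly invertible on $\sL$ and therefore $\cran(I-H)=\sL$, which immediately forces $\cran\big(H(I-H)\big)=\cran\big(H(I-H)\big)$ and, since $I-H$ has trivial kernel, $\ran\big(H(I-H)\big)=H\,\ran(I-H)=\ran H$, hence $\cran\big(H(I-H)\big)=\cran H$. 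This yields \eqref{aaaab}, and the final remark ``if $H\ne0$ then $\cran T_1\cap\cran T_2\ne\{0\}$'' is immediate since $H\ne0$ means $\cran H\ne\{0\}$.

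The main obstacle I anticipate is purely bookkeeping rather than conceptual: one must be careful that the block decompositions in \eqref{konk} and \eqref{driek} are compatible, i.e.\ that $\sK=\mul T^{**}\oplus\cdom T^*$ and then $\cdom T^*=(\cdom T^*\ominus\sL)\oplus\sL$ combine into the single orthogonal decomposition $\sK=\mul T^{**}\oplus(\cdom T^*\ominus\sL)\oplus\sL$ used to compute $K$, $I-K$, and their product. Once that is set up, the computation of $(I-K)K$ is a one-line diagonal-block calculation, and the only genuine lemma invoked beyond Theorem~\ref{overl} and Lemma~\ref{lap} is the identity $\cran\big(H(I-H)\big)=\cran H$, which relies on the invertibility of $I-H$ built into the hypotheses of Lemma~\ref{kontt}. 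I would also double-check that the minimality hypothesis $\cran T=\sK$ in part (b) is genuinely needed only for the application of Lemma~\ref{lap} (it is), whereas part (a) needs no such hypothesis because Theorem~\ref{overl} is unconditional given the decomposition $T=T_1+T_2$.
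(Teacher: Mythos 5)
Your proposal is correct and follows essentially the same route as the paper: compute the block form of $(I-K)K$ from \eqref{konk} and \eqref{driek}, apply Theorem \ref{overl} for (a), and apply Lemma \ref{lap} together with the surjectivity of $I-H$ (so that $\cran H(I-H)=\cran H$) for (b). The brief detour via Lemma \ref{Klemma} in your part (b) is unnecessary but harmless, since the bounded invertibility of $I-H$ already gives $\ran H(I-H)=\ran H$ directly, exactly as in the paper.
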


\begin{proof}
First observe with the matrix representations in \eqref{konk} and \eqref{driek} that
\begin{equation}\label{gh}
 (I-K)K=\begin{pmatrix} 0 & 0\\0& (I-G)G \end{pmatrix} \quad \mbox{and} \quad
  (I-G)G=\begin{pmatrix} 0 & 0\\0& (I-H)H \end{pmatrix}.
\end{equation}

(a) The description \eqref{aaaa} is obtained directly
from Theorem \ref{overl} by using the block formulas in \eqref{gh}.

(b) By assumption $I-H$ is surjective and hence $\cran (I-H)H=\cran H$.
Since $T$ is minimal, the statement in \eqref{aaaab}
follows from Lemma \ref{lap} again by means of \eqref{gh}.
\end{proof}

\section{Pairs of bounded linear operators}\label{OpRan}

Let $\Phi\in\bB(\sE,\sH)$ and $\Psi\in\bB(\sE,\sK)$ be bounded linear operators.
With these operators one associates the linear relation $L(\Phi,\Psi) \in \bLR(\sH, \sK)$,
defined by
\begin{equation}\label{Tabegin}
 L(\Phi, \Psi)=\big\{\,\{\Phi \eta, \Psi \eta \}:\,  \eta \in\sE\,\big\},
\end{equation}
so that $L(\Phi, \Psi)$ is an operator range relation in the sense of \cite{BHS}, \cite{HS2022a}.
It follows directly from the definition of $L(\Phi, \Psi)$ that its domain and range are given by
\begin{equation}\label{Tabeginn}
\dom L(\Phi, \Psi)=\ran \Phi \quad \mbox{and} \quad \ran L(\Phi, \Psi)=\ran \Psi,
\end{equation}
while its kernel and multivalued part are given by
\begin{equation}\label{Tabb}
 \ker L(\Phi, \Psi)=\Phi(\ker \Psi), \quad \mul L(\Phi, \Psi)= \Psi(\ker \Phi).
\end{equation}
This section gives a brief overview of the decompositions $\Psi=\Psi_1+\Psi_2$
with respect to $\Phi$, with bounded operators $\Psi_1$ and $\Psi_2$,
in the present context of a pseudo-orthogonal decomposition of the space $\sK$,
allowing interaction between the components as in Sections \ref{overlap}
and Section \ref{Regsing}.
These decompositions of $\Psi$ with respect to $\Phi$
will be obtained via the corresponding decompositions
of the corresponding linear relation $L(\Phi, \Psi)$;
for the orthogonal case, see \cite{HS2022a}.

\medskip

The present interest is in sums  $\Psi=\Psi_{1}+\Psi_{2}$
 and their interplay with the corresponding linear relations $L(\Phi, \Psi_1+\Psi_2)$.

\begin{lemma}\label{uiteind}
Let $\Phi\in\bB(\sE,\sH)$, $\Psi\in\bB(\sE,\sK)$, and assume that
$\Psi=\Psi_{1}+\Psi_{2}$ where $\Psi_1, \Psi_2 \in \bB(\sE,\sK)$. Then
there is domain equality
\[
\dom L(\Phi, \Psi)=\dom L(\Phi, \Psi_1) =\dom L(\Phi, \Psi_2),
\]
and inclusion of the relations
\begin{equation}\label{vaasneww-}
L(\Phi, \Psi) \subset L(\Phi, \Psi_{1})+L(\Phi, \Psi_{2}).
\end{equation}
Moreover, there is equality in \eqref{vaasneww-}:
\begin{equation}\label{telaat}
 L(\Phi, \Psi)=L(\Phi, \Psi_1) +L(\Phi, \Psi_2)
\end{equation}
if and only if
\begin{equation}\label{mmulli}
\Psi(\ker \Phi)= \Psi_1(\ker \Phi)+ \Psi_2(\ker \Phi).
\end{equation}
The sum in \eqref{telaat} is strict (i.e., the sum in \eqref{mmulli} is direct) if and only if
\begin{equation}\label{mmulli+}
\Psi_1(\ker \Phi) \cap \Psi_2(\ker \Phi)=\{0\}.
\end{equation}
\end{lemma}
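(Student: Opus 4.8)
The plan is to derive all the assertions from the explicit formulas for the domain, kernel, and multivalued part of an operator range relation recorded in \eqref{Tabeginn} and \eqref{Tabb}, combined with the abstract sum machinery of Section \ref{overlap}.

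First I would record the domain equality: by \eqref{Tabeginn} one has $\dom L(\Phi, \Psi) = \ran \Phi$, and the right-hand side does not involve the second argument, so the same identity holds with $\Psi$ replaced by $\Psi_1$ or by $\Psi_2$. For the inclusion \eqref{vaasneww-}, I would take $\{f, g\} \in L(\Phi, \Psi)$, say $f = \Phi \eta$ and $g = \Psi \eta$ with $\eta \in \sE$; then splitting $g = \Psi_1 \eta + \Psi_2 \eta$ exhibits $\{f, \Psi_1 \eta\} \in L(\Phi, \Psi_1)$ and $\{f, \Psi_2 \eta\} \in L(\Phi, \Psi_2)$, so $\{f, g\}$ lies in $L(\Phi, \Psi_1) + L(\Phi, \Psi_2)$ by the definition \eqref{aaa0} of the sum.

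With the domain equality and the inclusion \eqref{vaasneww-} in hand, the equivalence of \eqref{telaat} and \eqref{mmulli} is precisely Lemma \ref{opnieuw} applied to $T = L(\Phi, \Psi)$, $T_1 = L(\Phi, \Psi_1)$, $T_2 = L(\Phi, \Psi_2)$: equality in \eqref{vaasneww-} holds if and only if $\mul L(\Phi, \Psi) = \mul L(\Phi, \Psi_1) + \mul L(\Phi, \Psi_2)$, which by the multivalued-part formula in \eqref{Tabb} reads $\Psi(\ker \Phi) = \Psi_1(\ker \Phi) + \Psi_2(\ker \Phi)$. Finally, by Definition \ref{ssuumm} the sum $L(\Phi, \Psi_1) + L(\Phi, \Psi_2)$ is strict exactly when $\mul L(\Phi, \Psi_1) \cap \mul L(\Phi, \Psi_2) = \{0\}$, and invoking \eqref{Tabb} once more this is the condition \eqref{mmulli+}.

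There is essentially no obstacle here: everything reduces to the formulas \eqref{Tabeginn}--\eqref{Tabb} together with Lemma \ref{opnieuw} and Definition \ref{ssuumm}. The one point worth recording explicitly is that the multivalued-part formula in \eqref{Tabb} applies verbatim to each summand, so that $\mul L(\Phi, \Psi_i) = \Psi_i(\ker \Phi)$, and hence the translation between the relation-theoretic conditions coming from Section \ref{overlap} and the operator-theoretic conditions \eqref{mmulli} and \eqref{mmulli+} is merely a matter of substitution.
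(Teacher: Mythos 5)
Your proof is correct and follows essentially the same route as the paper: domain equality and the inclusion \eqref{vaasneww-} directly from \eqref{Tabegin}--\eqref{Tabeginn} and the sum definition \eqref{aaa0}, then Lemma \ref{opnieuw} plus the multivalued-part formula \eqref{Tabb} for the equality criterion, and Definition \ref{ssuumm} with \eqref{Tabb} for strictness. No gaps.
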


\begin{proof}
From the definition of the relation $L(\Phi, \Psi)$  in \eqref{Tabegin}  it is clear that
\[
\dom L(\Phi,\Psi)=\dom L(\Phi, \Psi_1)=\dom L(\Phi, \Psi_2)=\ran \Phi,
\]
see  \eqref{Tabeginn}.
Furthermore, it follows from the definition of the sum in \eqref{aaa0} that \eqref{vaasneww-} holds.
Now recall from Lemma \ref{opnieuw} that there is equality in \eqref{vaasneww-}
if and only if
\[
\mul L(\Phi, \Psi)=\mul L(\Phi,\Psi_1) +\mul L(\Phi,\Psi_2),
\]
which is clearly equivalent to \eqref{mmulli}; cf. \eqref{Tabb}.
\end{proof}

\begin{remark}\label{uiteind2}
Let $\sK$ have a pseudo-orthogonal decomposition
$\sK=\sX+\sY$ with associated nonnegative contractions $X$ and $Y$ such that $X+Y=I$.
Then
by Definition \ref{nonorthsum} the decomposition \eqref{telaat} of the relation $L(\Phi, \Psi)$
is pseudo-orthogonal if and only if
\begin{itemize}\def\labelenumi{\rm(\alph{enumi})}
\item[(a)] $\ran \Psi_1 \subset \sX$ and $\ran \Psi_2 \subset \sY$;
\item[(b)]   for each $\eta \in \sE$ there exist elements $\eta', \eta'' \in \sE$
with $\Phi\eta=\Phi \eta'=\Phi \eta''$ and
$\Psi \eta =\Psi_1 \eta'+\Psi_2 \eta''$ for which
 \[
 \|\Psi \eta \|_\sK^2=\|\Psi_1 \eta' \|_\sX^2 + \|\Psi_2 \eta'' \|_\sY^2.
\]
\end{itemize}
  Note that (b) implies $\eta-\eta', \eta-\eta'' \in \ker \Phi$
and $\Psi_1\eta'+\Psi_2 \eta''=\Psi \eta=\Psi_1 \eta+\Psi_2 \eta$, so that
\begin{equation}\label{neu0}
\Psi_1 (\eta' -\eta)= \Psi_2(\eta-\eta'').
 \end{equation}
By Lemma \ref{nonorthsum1} the sum in \eqref{telaat} is strict,
thus one has  \eqref{mmulli+}. Therefore \eqref{neu0} gives that
$\Psi_1 \eta'=\Psi_1 \eta$ and $\Psi_2 \eta''=\Psi_2 \eta$.
Hence (b) implies that
\begin{equation}\label{neu}
\|\Psi \eta \|_\sK^2=\|\Psi_1 \eta \|_\sX^2 + \|\Psi_2 \eta \|_\sY^2, \quad \eta \in \sE.
\end{equation}
Note that if \eqref{neu} is satisfied, then (b) holds automatically.
Thus the conditions (b) and \eqref{neu} are equivalent.
\end{remark}

\begin{definition}\label{uiteind3}
 Let $\Phi$, $\Psi$, $\Psi_1$, and $\Psi_2$ be bounded linear operators in $\bB(\sE,\sH)$
and assume that $\Psi$ has the decomposition
\begin{equation}\label{bbb}
\Psi=\Psi_{1}+\Psi_{2} \quad \mbox{with} \quad \Psi(\ker \Phi)=\Psi_1(\ker \Phi) + \Psi_2(\ker \Phi),
 \quad \mbox{direct sum}.
\end{equation}
Let $\sK$ have a \textit{pseudo-orthogonal decomposition}
$\sK=\sX+\sY$ with associated nonnegative contractions $X$ and $Y$ such that $X+Y=I$.
Then the decomposition \eqref{bbb} of $\Psi$ with respect to $\Phi$
is called pseudo-orthogonal if $\Phi$, $\Psi$, $\Psi_1$, and $\Psi_2$
satisfy the conditions
\begin{itemize}
\item[(a)]  $\ran \Psi_1 \subset \sX$ and $\ran \Psi_2 \subset \sY$;
\item[(b)]  $\|\Psi \eta \|_\sK^2=\|\Psi_1 \eta \|_\sX^2 + \|\Psi_2 \eta \|_\sY^2$
for all $\eta \in \sE$.
\end{itemize}
\end{definition}

It is clear from Remark \ref{uiteind2}  that the
decomposition \eqref{bbb} of $\Psi$ with respect to $\Phi$
is pseudo-orthogonal if and only if the corresponding operator range relation $L(\Phi, \Psi)$
in \eqref{Tabegin}  is pseudo-orthogonal.
 The pseudo-orthogonal decompositions of $\Psi$ with respect to $\Phi$
in Definition \ref{uiteind3} can now be characterized by means of nonnegative
contractions in $\bB(\sK)$.

\begin{theorem}\label{mainab0}
Let $\Phi\in\bB(\sE,\sH)$ and $\Psi\in\bB(\sE,\sK)$.
Assume that $K \in \bB(\sK)$ is a nonnegative contraction
which satisfies
\begin{equation}\label{comsum1aw0}
\Psi (\ker \Phi)=(I-K) \Psi(\ker \Phi) + K \Psi(\ker \Phi), \quad \mbox{direct sum},
\end{equation}
and define
\begin{equation}\label{comsum1cw0}
\Psi_1=(I-K)\Psi \quad \mbox{and}  \quad \Psi_2=K\Psi.
\end{equation}
Then the sum $\Psi=\Psi_1+\Psi_2$ as in \eqref{bbb}
is a pseudo-orthogonal decomposition of $\Psi$ with respect to $\Phi$,
connected with  the pair  $I-K$ and $K$ in the sense of Definition {\rm \ref{uiteind3}}.

Conversely, let $\Psi=\Psi_1+\Psi_2$  in \eqref{bbb} be a pseudo-orthogonal decomposition of $\Psi$
with respect to $\Phi$ in the sense of Definition {\rm \ref{uiteind3}}.
Then there exists a nonnegative contraction $K \in \bB(\sK)$
such that
\eqref{comsum1aw0} and  \eqref{comsum1cw0} are satisfied.
\end{theorem}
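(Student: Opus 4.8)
The plan is to reduce this statement to the already-proved relation-level result, Theorem~\ref{prelem0sum}, by passing through the operator range relation $L(\Phi,\Psi)$ and Remark~\ref{uiteind2}. Throughout I will freely use the identities \eqref{Tabeginn}, \eqref{Tabb}, which translate all relevant data about $L(\Phi,\Psi)$ into data about $\Phi$ and $\Psi$; in particular $\mul L(\Phi,\Psi)=\Psi(\ker\Phi)$.

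First, assume $K\in\bB(\sK)$ is a nonnegative contraction satisfying \eqref{comsum1aw0}, and set $\Psi_1=(I-K)\Psi$, $\Psi_2=K\Psi$. I would form the relation $T:=L(\Phi,\Psi)$. Since $\mul T=\Psi(\ker\Phi)$, condition \eqref{comsum1aw0} is exactly the directness condition \eqref{sum3} for $K$ applied to $T$. Moreover one checks directly from \eqref{Tabegin} that $L(\Phi,(I-K)\Psi)=(I-K)L(\Phi,\Psi)=(I-K)T$ and similarly $L(\Phi,K\Psi)=KT$; these are the identities \eqref{comsum2sum}. Hence Theorem~\ref{prelem0sum} applies and yields that $T=(I-K)T+KT$, i.e. $L(\Phi,\Psi)=L(\Phi,\Psi_1)+L(\Phi,\Psi_2)$, is a pseudo-orthogonal decomposition of $T$ connected with the pair $I-K$, $K$. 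Now invoke Remark~\ref{uiteind2}: a pseudo-orthogonal decomposition of the relation $L(\Phi,\Psi)$ is equivalent to conditions (a) and (b) of Definition~\ref{uiteind3} for $\Psi=\Psi_1+\Psi_2$ with respect to $\Phi$. Concretely, $\ran\Psi_1\subset\ran(I-K)\subset\sX$ and $\ran\Psi_2\subset\ran K\subset\sY$ give (a), and the Pythagorean identity (b) follows from $X+Y=I$ together with \eqref{ip}, exactly as in the proof of Theorem~\ref{prelem0sum}. Also \eqref{comsum1aw0} is precisely the directness requirement in \eqref{bbb}, so $\Psi=\Psi_1+\Psi_2$ is a genuine decomposition of the required type.

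For the converse, suppose $\Psi=\Psi_1+\Psi_2$ is a pseudo-orthogonal decomposition of $\Psi$ with respect to $\Phi$ in the sense of Definition~\ref{uiteind3}. By the equivalence noted after Definition~\ref{uiteind3} (and spelled out in Remark~\ref{uiteind2}), the corresponding relations satisfy $L(\Phi,\Psi)=L(\Phi,\Psi_1)+L(\Phi,\Psi_2)$ with this being a pseudo-orthogonal decomposition of $T=L(\Phi,\Psi)$ in the sense of Definition~\ref{nonorthsum}; here I use Lemma~\ref{uiteind} to get the domain equalities and the genuine sum, and (a)–(b) of Definition~\ref{uiteind3} to get the pseudo-orthogonality. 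Theorem~\ref{prelem0sum} then supplies a nonnegative contraction $K\in\bB(\sK)$ with $L(\Phi,\Psi_1)=(I-K)L(\Phi,\Psi)$, $L(\Phi,\Psi_2)=KL(\Phi,\Psi)$, and $\mul L(\Phi,\Psi)=(I-K)\mul L(\Phi,\Psi)+K\mul L(\Phi,\Psi)$ as a direct sum. Translating back via \eqref{Tabb}, the last identity is \eqref{comsum1aw0}. It remains to deduce $\Psi_1=(I-K)\Psi$ and $\Psi_2=K\Psi$ (not merely at the level of graphs). Since the sum is strict, for each $\eta\in\sE$ the decomposition $\Psi\eta=\Psi_1\eta+\Psi_2\eta$ with $\Psi_1\eta\in\sX$, $\Psi_2\eta\in\sY$ is forced by Proposition~\ref{DrRoN} to be the canonical one $\Psi_1\eta=(I-K)\Psi\eta$, $\Psi_2\eta=K\Psi\eta$; this is \eqref{comsum1cw0}.

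The main obstacle, and the only place requiring real care, is the very last step of the converse: the equality $L(\Phi,\Psi_1)=(I-K)L(\Phi,\Psi)$ of operator range relations does not by itself force the pointwise identity $\Psi_1\eta=(I-K)\Psi\eta$ for every $\eta\in\sE$, because two different bounded operators can generate the same operator range relation once composed with $\Phi$ (they need only agree modulo the handling of $\ker\Phi$). What rescues the argument is strictness of the sum, i.e. $\Psi_1(\ker\Phi)\cap\Psi_2(\ker\Phi)=\{0\}$ (Lemma~\ref{nonorthsum1}), combined with the Pythagorean equality (b) and the equality case in Proposition~\ref{DrRoN}, which together pin down $\Psi_1\eta$ and $\Psi_2\eta$ uniquely as $(I-K)\Psi\eta$ and $K\Psi\eta$. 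I would lift the corresponding paragraph from the proof of Theorem~\ref{prelem0sum} almost verbatim, using \eqref{grei3}–\eqref{grei} and Lemma~\ref{nonorthsum1}, and then read off the pointwise statement using that $\dom L(\Phi,\Psi_i)=\ran\Phi$.
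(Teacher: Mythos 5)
Your proposal is correct and follows essentially the same route as the paper: the converse is handled exactly as in the paper (pass to $L(\Phi,\Psi)$, invoke Theorem~\ref{prelem0sum} via Remark~\ref{uiteind2} to obtain $K$ with the graph identities, then use strictness of the sum, i.e.\ $\Psi_1(\ker\Phi)\cap\Psi_2(\ker\Phi)=\{0\}$, to upgrade the graph equalities to the pointwise identities \eqref{comsum1cw0}), and the "obstacle" you single out is precisely the step the paper resolves with the $\eta',\eta''$ argument. The only cosmetic difference is in the direct part, where the paper verifies (a) and (b) of Definition~\ref{uiteind3} by the short computation with \eqref{ip} instead of routing through Theorem~\ref{prelem0sum} and Remark~\ref{uiteind2}, but the underlying calculation is identical.
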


\begin{proof}
Let $\Phi \in\bB(\sE,\sH)$, $\Psi \in\bB(\sE,\sK)$, and let
$K \in \bB(\sK)$ be a nonnegative contraction.
Define the operators $\Psi_{1}, \Psi_{2} \in\bB(\sE,\sK)$ by \eqref{comsum1cw0},
so that $\Psi=\Psi_1+\Psi_2$.
 Let $\sX$ and $\sY$ be the pair of complemented operator range spaces,
contractively contained in $\sK$, associated
with the nonnegative contractions $X=I-K$ and $Y=K$.
 By definition
\[
 \ran \Psi_1 =\ran (I-K) \Psi \subset \sX
 \quad \mbox{and} \quad \ran \Psi_2 =\ran K\Psi \subset \sY,
\]
so that condition (a) in Definition \ref{uiteind3} is satisfied.
To see (b) in Definition \ref{uiteind3} observe that
 \[
\begin{split}
 \|\Psi_1 \eta \|_\sX^2 + \|\Psi_2 \eta \|_\sY^2&
 =  \|(I-K)\Psi \eta \|_\sX^2 + \|K \Psi \eta\|_\sY^2 \\
&=((I-K)\Psi \eta, \Psi \eta)_\sK+(K\Psi \eta,\Psi \eta)_\sK =\|\Psi \eta\|_\sK^2,
\end{split}
\]
so that condition (b) in Remark \ref{uiteind2} is satisfied.
Hence, $\Psi=\Psi_1+\Psi_2$ is a  pseudo-orthogonal decomposition
of $\Psi$ with respect to $\Phi$ in the sense of Definition \ref{uiteind3}.

Conversely, assume that $\Psi=\Psi_1+\Psi_2$ is a pseudo-orthogonal decomposition
with respect to $\Phi$ as in Definition \ref{uiteind3}.
Then $L(\Phi, \Psi)$ in \eqref{Tabegin} has a pseudo-orthogonal
decomposition of the form
\[
L(\Phi, \Psi)=L(\Phi, \Psi_{1})+L(\Phi, \Psi_{2}).
\]
cf. Remark \ref{uiteind2}.
Therefore, by Theorem \ref{prelem0sum} there exists a
nonnegative contraction $K \in \bB(\sK)$ gives
\[
L(\Phi, \Psi_1)=(I-K)L(\Phi, \Psi) \quad  \mbox{and} \quad L(\Phi, \Psi_2)=KL(\Phi, \Psi),
\]
which reads
\begin{equation}\label{grijps1}
\begin{split}
L(\Phi, \Psi_1)=L(\Phi, (I-K)\Psi) \quad  \mbox{and} \quad L(\Phi, \Psi_2)=L(\Phi, K \Psi).
\end{split}
\end{equation}
To verify the identities in \eqref{comsum1cw0}
let $\eta \in \sE$. Then due to  \eqref{grijps1} there exist $\eta', \eta'' \in \sE$ such that
$\Phi \eta=\Phi \eta'=\Phi \eta''$,
while $(I-K)\Psi \eta=\Psi_1 \eta'$ and $K\Psi \eta=\Psi_2 \eta''$.
From $\Psi_1\eta'+\Psi_2 \eta''=\Psi \eta =\Psi_1 \eta+\Psi_2 \eta$
it follows that $\Psi_1 \eta'=\Psi_1 \eta$ and $\Psi_2 \eta''=\Psi_2 \eta$;
cf. \eqref{mmulli+} and Remark \ref{uiteind2}.
Therefore, the identities in \eqref{comsum1cw0} hold.
\end{proof}

Now consider the case of operator range relations
$L(\Phi, \Psi)$ that are closable or singular.
Recall that $L(\Phi, \Psi)$ is closable if and only if $\mul L(\Phi, \Psi)^{**}=\{0\}$ or, equivalently,
for every sequence $\eta_{n} \in \sE$ one has
\begin{equation}\label{grij1}
 \Phi \eta_{n} \to 0 \quad \mbox{and} \quad
 \Psi (\eta_{n}-\eta_{m}) \to 0
 \quad \Rightarrow \quad \Psi \eta_{n} \to 0.
\end{equation}
Likewise, $L(\Phi, \Psi)$ is singular if and only if  $\ran L(\Phi, \Psi)^{**} \subset \mul L(\Phi, \Psi)^{**}$
(cf. \cite[Proposition 2.8]{HSS2018}) or, equivalently,
for every sequence $\eta_{n}$ in $\sE$  there exists a subsequence,
denoted by $\omega_{n}$, such that
\begin{equation}\label{grij2}
\Psi (\eta_{n}-\eta_{m}) \to 0 \quad \Rightarrow \quad  \Phi \omega_n \to 0.
\end{equation}
Note that $L(\Psi, \Phi)=L(\Phi, \Psi)^{-1}$ implies that
$L(\Phi, \Psi)$ is singular if and only if $L(\Psi, \Phi)$ is singular.

\begin{remark}
The characterizations in \eqref{grij1} and \eqref{grij2} of
closable and singular operator range relations
remain valid if the sequences $\varphi_n$ are taken from a dense set $\sR \subset \sE$.
To see this, observe that for any sequence $\eta_{n} \in \sE$
there exists an approximating sequence $\eta_{n}' \in \sR$, such that
\begin{equation*}
  \| \eta_{n} - \eta_{n}' \| \leq \frac{1}{n},
  \quad \mbox{in which case} \quad
  \big| \| L \eta_{n} \| - \| L \eta_{n}'\| \big|  \leq  \frac{1}{n} \|L\|,
\end{equation*}
for any  $L \in \mathbf{B}(\sE, \sL)$, where $\sL$ is a Hilbert space.
\end{remark}

The following simple observations about the adjoint relation $L(\Phi, \Psi)^*$
play a role in the rest of this section.
A direct calculation shows that
\[
 L(\Phi, \Psi)^*=\big\{\,\{k,h\}\in \sK\times\sH:\, \Psi^*k=\Phi^*h\,\big\}.
\]
Thus, by means of  the linear subspaces
\[
\sD(\Phi, \Psi)=\{k \in \sK:\, \Psi^{*}k\in \ran \Phi^{*}\},
\quad
\sR(\Phi, \Psi)=\{h \in \sH:\, \Phi^{*}h\in \ran \Psi^{*}\},
\]
the domain and the range of  $L(\Phi, \Psi)^{*}$
are given by
\begin{equation*}
 \dom L(\Phi, \Psi)^*=\sD(\Phi, \Psi), \quad  \ran L(\Phi, \Psi)^*=\sR(\Phi, \Psi),
\end{equation*}
and, likewise, the kernel and multivalued part of $L(\Phi, \Psi)^*$ are given by
\begin{equation*}
 \ker L(\Phi, \Psi)^*=\ker \Psi^*, \quad  \mul L(\Phi, \Psi)^*=\ker \Phi^*.
\end{equation*}

\begin{definition}\label{DEFregsing}
The operator $\Psi$ is called \textit{regular with respect to} $\Phi$ if $\sD(\Phi, \Psi)$
is dense in $\sK$, which is the case if and only if the relation $L(\Phi, \Psi)$ is closable.
Likewise, the operator $\Psi$ is called \textit{singular with respect to} $\Phi$ if
$\sD(\Phi, \Psi) \subset \ker \Psi^*$ or, equivalently,  $\sR(\Phi, \Psi) \subset \ker \Phi^*$,
which is the case if and only if the relation $L(\Phi, \Psi)$ is singular.
\end{definition}

Clearly, an equivalent characterization for singularity is that
\begin{equation}\label{PhiPsiSing}
 \ran \Phi^* \cap \ran \Psi^*=\{0\},
\end{equation}
(expressing the symmetry between $\Phi$ and $\Psi$); see also \cite[Lemma 5.2]{HS2022a}.

\begin{remark}
Both notions appearing in Definition \ref{DEFregsing}
have equivalent characterizations
which resemble their measure-theoretic analogs.
In particular, $\Psi$ is regular with respect to $\Phi$
if and only if $\Psi$ is almost dominated by $\Phi$.
In this case $\Psi$ has a Radon-Nikodym derivative
with respect to $\Phi$, which is given by the closed operator
\[
R(\Phi, \Psi)=L(\Phi, \Psi)^{**},
\]
and then $\Psi$ can be written as
$\Psi=R(\Phi, \Psi)\Phi$.
Likewise,
 $\Psi$ is singular with respect to $\Phi$ (or $\Phi$ is singular with respect to $\Psi$)
precisely if for any $\varXi \in \bB(\sK)$ one has
\[
 \varXi \prec \Phi \quad \mbox{and} \quad \varXi \prec \Psi \quad \Rightarrow \quad \varXi=0.
\]
For the definitions and the arguments, see \cite[Sections~5,~6]{HS2022a}.
\end{remark}

\begin{definition}\label{optype}
Let $\Phi\in\bB(\sE,\sH)$ and $\Psi\in\bB(\sE,\sK)$.
Then $\Psi$ is said to have a pseudo-orthogonal Lebesgue type
decomposition
\begin{equation}\label{oud}
\Psi=\Psi_{1}+\Psi_{2}
\end{equation}
with respect to $\Phi$ if the sum \eqref{oud}
is a  pseudo-orthogonal decomposition of $\Psi$
with respect to $\Phi$ as in Definition {\rm \ref{uiteind3}},
where $\Psi_{1}$ is regular with respect to $\Phi$ and
$\Psi_{2}$ is singular with respect to $\Phi$.
\end{definition}

The following characterization is now straightforward.

\begin{theorem}\label{mainab}
Let $\Phi\in\bB(\sE,\sH)$ and $\Psi\in\bB(\sE,\sK)$.
Assume that $K \in \bB(\sK)$ is a nonnegative contraction
which satisfies
 \begin{equation}\label{comsum1aw}
\clos \big\{k\in\cran (I-K):\, (I-K) k \in \sD(\Phi, \Psi) \big\}=\cran (I-K),
\end{equation}
 \begin{equation}\label{comsum1bw}
 \ran K \cap \sD(\Phi, \Psi) \subset \ker \Psi^*,
\end{equation}
and define
\begin{equation}\label{comsum1cw}
  \Psi_1=(I-K)\Psi \quad \mbox{and} \quad \Psi_2=K\Psi.
\end{equation}
Then the sum $\Psi=\Psi_1+\Psi_2$ as in \eqref{oud}  is a  pseudo-orthogonal
Lebesgue type decomposition of $\Psi$ with respect to $\Phi$,
connected with the pair $I-K$ and $K$ in the sense of Definition {\rm \ref{optype}}.

Conversely, let $\Psi=\Psi_1+\Psi_2$ in \eqref{oud}  be
a pseudo-orthogonal Lebesgue type decomposition
of $\Psi$ with respect to $\Phi$ in the sense of Definition {\rm \ref{optype}}.
Then there exists a nonnegative contraction $K \in \bB(\sK)$
such that \eqref{comsum1aw}, \eqref{comsum1bw}, and \eqref{comsum1cw}
are satisfied.
\end{theorem}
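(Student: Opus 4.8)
\textbf{Proof plan for Theorem \ref{mainab}.}
The plan is to deduce this statement directly from the machinery already assembled, exactly as Theorem \ref{prelem0} was deduced from Theorem \ref{prelem0sum}, Lemma \ref{LebtypelemA}, and Lemma \ref{LebtypelemB}, but now transported through the operator range relation $L(\Phi,\Psi)$. The bridge is the dictionary established just before the statement: $\dom L(\Phi,\Psi)^*=\sD(\Phi,\Psi)$ and $\ker L(\Phi,\Psi)^*=\ker\Psi^*$. Under this dictionary, condition \eqref{comsum1aw} is precisely condition \eqref{comsum1a} for the relation $T=L(\Phi,\Psi)$, and condition \eqref{comsum1bw} is precisely condition \eqref{comsum1b} for $T=L(\Phi,\Psi)$.

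For the forward direction, suppose $K\in\bB(\sK)$ is a nonnegative contraction satisfying \eqref{comsum1aw} and \eqref{comsum1bw}, and define $\Psi_1,\Psi_2$ by \eqref{comsum1cw}. First I would observe that \eqref{comsum1aw0} holds automatically: since \eqref{comsum1aw} forces $\mul L(\Phi,\Psi)^{**}\subset\ker(I-K)$ via Lemma \ref{LebtypelemA} — equivalently $\Psi(\ker\Phi)\subset\mul L(\Phi,\Psi)^{**}\subset\ker(I-K)$, actually one only needs $\Psi(\ker\Phi)\cap\ker(I-K)\cap\ker K=\{0\}$, which is automatic once $(I-K)$ is injective on the relevant subspace — so the directness in \eqref{comsum1aw0} is guaranteed, as in the remark preceding Theorem \ref{prelem0}. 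Then Theorem \ref{mainab0} applies and gives that $\Psi=\Psi_1+\Psi_2$ is a pseudo-orthogonal decomposition with respect to $\Phi$. It remains to check that $\Psi_1=(I-K)\Psi$ is regular with respect to $\Phi$ and $\Psi_2=K\Psi$ is singular with respect to $\Phi$. By Definition \ref{DEFregsing} and the dictionary, regularity of $\Psi_1$ with respect to $\Phi$ is equivalent to closability of $L(\Phi,\Psi_1)=L(\Phi,(I-K)\Psi)=(I-K)L(\Phi,\Psi)$, and this is exactly the content of Lemma \ref{LebtypelemA} applied with $R=I-K$ and $T=L(\Phi,\Psi)$, whose hypothesis \eqref{RTreg} is \eqref{comsum1aw}. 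Similarly, singularity of $\Psi_2$ with respect to $\Phi$ is equivalent to singularity of $L(\Phi,K\Psi)=KL(\Phi,\Psi)$, which is Lemma \ref{LebtypelemB} applied with $R=K$ and $T=L(\Phi,\Psi)$, whose hypothesis \eqref{KKsingQ} is \eqref{comsum1bw}. This establishes Definition \ref{optype}.

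For the converse, suppose $\Psi=\Psi_1+\Psi_2$ is a pseudo-orthogonal Lebesgue type decomposition with respect to $\Phi$ in the sense of Definition \ref{optype}. By Remark \ref{uiteind2} the relation $L(\Phi,\Psi)$ inherits a pseudo-orthogonal decomposition $L(\Phi,\Psi)=L(\Phi,\Psi_1)+L(\Phi,\Psi_2)$, and by Definition \ref{optype} together with Definition \ref{DEFregsing} the relation $L(\Phi,\Psi_1)$ is closable and $L(\Phi,\Psi_2)$ is singular, so this is in fact a pseudo-orthogonal Lebesgue type decomposition of the relation $T=L(\Phi,\Psi)$ in the sense of Definition \ref{nonorth}. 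Theorem \ref{prelem0} then furnishes a nonnegative contraction $K\in\bB(\sK)$ with $L(\Phi,\Psi_1)=(I-K)L(\Phi,\Psi)$ and $L(\Phi,\Psi_2)=KL(\Phi,\Psi)$ for which \eqref{comsum1a} and \eqref{comsum1b} hold; translating back through the dictionary gives \eqref{comsum1aw} and \eqref{comsum1bw}. Finally, the identities \eqref{comsum1cw} follow from the argument already used at the end of the proof of Theorem \ref{mainab0}: for $\eta\in\sE$ one produces $\eta',\eta''$ with $\Phi\eta=\Phi\eta'=\Phi\eta''$, $(I-K)\Psi\eta=\Psi_1\eta'$, $K\Psi\eta=\Psi_2\eta''$, and then $\Psi_1\eta'+\Psi_2\eta''=\Psi\eta=\Psi_1\eta+\Psi_2\eta$ combined with strictness \eqref{mmulli+} forces $\Psi_1\eta'=\Psi_1\eta$ and $\Psi_2\eta''=\Psi_2\eta$.

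The only point requiring genuine care — the main obstacle — is verifying that the two translations (condition \eqref{comsum1aw} $\leftrightarrow$ condition \eqref{comsum1a} for $L(\Phi,\Psi)$, and \eqref{comsum1bw} $\leftrightarrow$ \eqref{comsum1b}) are literal identities rather than merely analogues: one must confirm $\dom L(\Phi,\Psi)^*=\sD(\Phi,\Psi)$ and $\ker L(\Phi,\Psi)^*=\ker\Psi^*$ are used consistently, and that the intermediate quantities appearing in Lemmas \ref{LebtypelemA} and \ref{LebtypelemB} (the set $\cD$ and $\ran R\cap\dom T^*$) match up correctly with $R=I-K$ and $R=K$ respectively. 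All of this is bookkeeping once the dictionary is in place, so I expect no real difficulty beyond being precise about these substitutions.
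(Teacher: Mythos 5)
Your proposal is correct and takes essentially the same route as the paper: reduce to Theorem \ref{mainab0} (and, for the converse, Theorem \ref{prelem0}) through the relation $L(\Phi,\Psi)$, apply Lemmas \ref{LebtypelemA} and \ref{LebtypelemB} with $R=I-K$ and $R=K$ using the dictionary $\dom L(\Phi,\Psi)^*=\sD(\Phi,\Psi)$, $\ker L(\Phi,\Psi)^*=\ker\Psi^*$, and observe that \eqref{comsum1aw0} is automatic because \eqref{comsum1aw} forces $\Psi(\ker\Phi)\subset\ker(I-K)$. The only blemish is your parenthetical about what directness ``only needs,'' which misstates the condition, but it is harmless: since $\Psi(\ker\Phi)\subset\ker(I-K)$ one has $(I-K)\Psi(\ker\Phi)=\{0\}$ and $K\Psi(\ker\Phi)=\Psi(\ker\Phi)$, so both the equality and the directness in \eqref{comsum1aw0} hold trivially, exactly as in the paper.
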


To verify the theorem, recall Theorem \ref{mainab0} and apply Definition \ref{DEFregsing}
to the components $\Psi_1$ and $\Psi_2$ in \eqref{comsum1cw};
see also Theorem \ref{prelem0}, or Lemma \ref{LebtypelemA} and Lemma \ref{LebtypelemB}.
Note that the condition for the sum in \eqref{comsum1aw0},
as stated in Theorem \ref{mainab0}, is now absent since this condition
automatically follows from the condition \eqref{comsum1aw}:
one has $\Psi (\ker \Phi) \subset \ker (I-K)$; see \eqref{ber1+}, \eqref{Tabb}.
Observe, also that the singularity condition \eqref{comsum1bw} for the component $\Psi_2=K\Psi$
is equivalent to $\ran \Psi^*K \cap \ran \Phi^* = \{0\}$; cf. \eqref{PhiPsiSing}.

\medskip

Let $P_0$ be the orthogonal projection onto $\sD(\Phi, \Psi)^\perp$.
Then the pair of  operators $\Psi_{\rm reg}=(I-P_0) \Psi$ and $\Psi_{\rm sing}=P_0 \Psi$,
 gives an orthogonal Lebesgue type decomposition $\Psi=\Psi_{\rm reg}+ \Psi_{\rm sing}$
with respect to $\Phi$. It is called the \textit{Lebesgue decomposition} of $\Psi$ with respect to $\Phi$.
Note that it follows from Theorem \ref{mainab} that $(I-K)P_0=0$, so that $(I-K)(I-P_0)=I-K$.
Hence,
for the regular part $\Psi_{\rm reg}$ there is the following characterization; cf. \cite{HS2022a}.

\begin{corollary}\label{optim}
Let $\Phi\in\bB(\sE,\sH)$ and $\Psi\in\bB(\sE,\sK)$.
Let $\Psi=\Psi_1+\Psi_2$ be a pseudo-orthogonal Lebesgue type decomposition of $\Psi$
with respect to $\Phi$, then
\[
 \|\Psi_1 h\| \leq \|\Psi_{\rm reg} h\|, \quad h \in \sE.
\]
\end{corollary}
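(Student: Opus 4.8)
The plan is to reduce the inequality $\|\Psi_1 h\| \le \|\Psi_{\rm reg} h\|$ for all $h\in\sE$ to the corresponding domination statement at the level of linear relations, and then invoke the maximality theorem for the regular part already established as Theorem \ref{dom}. First I would use the equivalence, recorded in the discussion preceding Definition \ref{optype} (via Remark \ref{uiteind2}), between a pseudo-orthogonal Lebesgue type decomposition $\Psi = \Psi_1 + \Psi_2$ of $\Psi$ with respect to $\Phi$ and a pseudo-orthogonal Lebesgue type decomposition $L(\Phi,\Psi) = L(\Phi,\Psi_1) + L(\Phi,\Psi_2)$ of the associated operator range relation. Applying Theorem \ref{dom} to $T = L(\Phi,\Psi)$ with $T_1 = L(\Phi,\Psi_1)$ gives $L(\Phi,\Psi_1) \prec_c L(\Phi,\Psi)_{\rm reg}$, and since $L(\Phi,\Psi)_{\rm reg} = L(\Phi,\Psi_{\rm reg})$ by the construction in \eqref{lebe} applied to this relation (using $P_0$ the projection onto $\mul L(\Phi,\Psi)^{**}$, which is precisely $\sD(\Phi,\Psi)^\perp$ up to the closure), one obtains $L(\Phi,\Psi_1) \prec_c L(\Phi,\Psi_{\rm reg})$.

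Next I would translate the contractive domination $L(\Phi,\Psi_1) \prec_c L(\Phi,\Psi_{\rm reg})$ back into an operator inequality for $\Psi_1$ and $\Psi_{\rm reg}$. By the definition of $\prec_c$ there is a contraction $C \in \bB(\sK)$ with $C\,L(\Phi,\Psi_{\rm reg}) \subset L(\Phi,\Psi_1)$; unpacking the definition \eqref{Tabegin} of these relations, this says that for every $\eta\in\sE$ the pair $\{\Phi\eta, C\Psi_{\rm reg}\eta\}$ lies in $L(\Phi,\Psi_1)$, hence there is $\eta'\in\sE$ with $\Phi\eta' = \Phi\eta$ and $\Psi_1\eta' = C\Psi_{\rm reg}\eta$. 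One then has to check that $\Psi_1\eta' = \Psi_1\eta$; this follows from the strictness of the sum, exactly as in the computation around \eqref{neu0} and \eqref{mmulli+} in Remark \ref{uiteind2}, because $\eta - \eta' \in \ker\Phi$ forces $\Psi_1(\eta-\eta')$ and $\Psi_2(\eta-\eta')$ to cancel while lying in complementary pieces. Thus $\Psi_1\eta = C\Psi_{\rm reg}\eta$ for all $\eta\in\sE$, and since $\|C\|\le 1$ this yields $\|\Psi_1\eta\|_\sK \le \|\Psi_{\rm reg}\eta\|_\sK$, which is the claim (writing $h$ for $\eta$).

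The step I expect to be the main obstacle is the careful identification $L(\Phi,\Psi)_{\rm reg} = L(\Phi,\Psi_{\rm reg})$, i.e. that the regular part of the operator range relation is itself an operator range relation with the ``reduced'' second operator $\Psi_{\rm reg} = (I-P_0)\Psi$, where $P_0$ projects onto $\sD(\Phi,\Psi)^\perp$. Here one must match the projection $I-P_0$ onto $\cdom L(\Phi,\Psi)^* = \cdom\{k : \Psi^*k \in \ran\Phi^*\} = \overline{\sD(\Phi,\Psi)}$ against the multivalued part $\mul L(\Phi,\Psi)^{**}$, and verify that multiplying the relation by $I - P_0$ (as in \eqref{lebe}) has the same effect as replacing $\Psi$ by $(I-P_0)\Psi$ inside the $L(\cdot,\cdot)$ notation; this is essentially the content of the orthogonal case in \cite{HS2022a}, but it should be spelled out here. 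Once that identification is in hand, the rest is the routine bookkeeping sketched above, so I would present the argument in the order: (1) pass to relations; (2) apply Theorem \ref{dom}; (3) identify $L(\Phi,\Psi_{\rm reg})$ with the regular part; (4) extract the contraction $C$ and the pointwise identity $\Psi_1 = C\Psi_{\rm reg}$ using strictness; (5) conclude the norm inequality.
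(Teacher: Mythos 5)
Your route is correct in outline, but it is genuinely different from (and heavier than) the paper's. The paper proves the corollary directly at the level of bounded operators: by Theorem \ref{mainab} one has $\Psi_1=(I-K)\Psi$ for a nonnegative contraction $K$ satisfying \eqref{comsum1aw}; by Lemma \ref{LebtypelemA} this closability condition forces $\ran (I-K)\subset\clos \sD(\Phi,\Psi)$, i.e. $(I-K)P_0=0$ (this is exactly the remark preceding the corollary), whence $\Psi_1=(I-K)(I-P_0)\Psi=(I-K)\Psi_{\rm reg}$ and the inequality follows since $I-K$ is a contraction. You instead pass to the relation $L(\Phi,\Psi)$, invoke Theorem \ref{dom}, identify $L(\Phi,\Psi)_{\rm reg}=L(\Phi,\Psi_{\rm reg})$, and pull the contractive domination back. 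That works, and the identification you single out as the main obstacle is in fact immediate: $R\,L(\Phi,\Psi)=L(\Phi,R\Psi)$ for any $R\in\bB(\sK)$ by \eqref{Tabegin}, and $\mul L(\Phi,\Psi)^{**}=(\dom L(\Phi,\Psi)^*)^\perp=\sD(\Phi,\Psi)^\perp=\ran P_0$, with no closure issue since an orthogonal complement is closed. What the paper's argument buys is brevity and the explicit factorization $\Psi_1=(I-K)\Psi_{\rm reg}$; what yours buys is seeing the corollary literally as the operator-range instance of the relation-level maximality theorem.

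One sub-step is mis-justified, although its conclusion is true. From $C\,L(\Phi,\Psi_{\rm reg})\subset L(\Phi,\Psi_1)$ you obtain $\eta'$ with $\Phi\eta'=\Phi\eta$ and $\Psi_1\eta'=C\Psi_{\rm reg}\eta$, and you claim $\Psi_1\eta'=\Psi_1\eta$ ``by strictness, as around \eqref{neu0} and \eqref{mmulli+}''. That computation in Remark \ref{uiteind2} used the extra relation $\Psi_1\eta'+\Psi_2\eta''=\Psi\eta$, which you do not have here; knowing only $\eta-\eta'\in\ker\Phi$ does not make $\Psi_1(\eta-\eta')$ and $\Psi_2(\eta-\eta')$ cancel, because $\Psi(\eta-\eta')$ need not vanish. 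The correct (and simpler) reason is regularity of $\Psi_1$: $L(\Phi,\Psi_1)$ is closable, hence an operator, so $\Psi_1(\ker\Phi)=\mul L(\Phi,\Psi_1)=\{0\}$ by \eqref{Tabb}; therefore $\Psi_1(\eta-\eta')=0$, giving $\Psi_1\eta=C\Psi_{\rm reg}\eta$ and the norm bound. With that repair your argument is complete.
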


\begin{corollary}\label{Andoun-}
Let $\Phi\in\bB(\sE,\sH)$
and $\Psi\in\bB(\sE,\sK)$.
Then the following statements are equivalent:
\begin{enumerate}\def\labelenumi{\rm(\roman{enumi})}
\item $\Psi$ admits a unique pseudo-orthogonal Lebesgue type decomposition with respect to $\Phi$;
 \item $\sD(\Phi, \Psi)$ is closed.
  \end{enumerate}
 \end{corollary}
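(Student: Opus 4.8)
The plan is to deduce this corollary from Theorem~\ref{uniq} applied to the operator range relation $T=L(\Phi,\Psi)$, using the dictionary set up in this section. The crucial identifications are $\dom L(\Phi,\Psi)^*=\sD(\Phi,\Psi)$ and $\ker L(\Phi,\Psi)^*=\ker\Psi^*$, so that statement~(ii) -- that $\sD(\Phi,\Psi)$ is closed -- is precisely the statement that $\dom T^*$ is closed for $T=L(\Phi,\Psi)$. Moreover, by Theorem~\ref{mainab} every pseudo-orthogonal Lebesgue type decomposition of $\Psi$ with respect to $\Phi$ has the form $\Psi_1=(I-K)\Psi$, $\Psi_2=K\Psi$ with a nonnegative contraction $K\in\bB(\sK)$ satisfying \eqref{comsum1aw} and \eqref{comsum1bw}; under the above identifications these are exactly the conditions \eqref{comsum1a} and \eqref{comsum1b} for $T$, so the admissible $K$'s are precisely those producing pseudo-orthogonal Lebesgue type decompositions $T_1=(I-K)T$, $T_2=KT$ of $T$ in the sense of Theorem~\ref{prelem0}. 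Finally, the Lebesgue decomposition of $\Psi$ with respect to $\Phi$, given by the orthogonal projection $P_0$ onto $\sD(\Phi,\Psi)^\perp$, corresponds to the Lebesgue decomposition of $T$, since $\sD(\Phi,\Psi)^\perp=\mul T^{**}$.

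For the implication (ii)~$\Rightarrow$~(i) I would follow the proof of the corresponding implication in Theorem~\ref{uniq}. Assume $\sD(\Phi,\Psi)$ is closed and let $\Psi=(I-K)\Psi+K\Psi$ be a pseudo-orthogonal Lebesgue type decomposition with $K$ as above. Writing $K$ in the block form \eqref{konk} relative to $\sK=\mul T^{**}\oplus\cdom T^*$, where now $\cdom T^*=\sD(\Phi,\Psi)$, one has $\ran G\subset\sD(\Phi,\Psi)$, hence $\ran G=\ran K\cap\sD(\Phi,\Psi)\subset\ker\Psi^*$ by \eqref{comsum1bw}. This means $\Psi^*(K-P_0)=0$, so $(K-P_0)\Psi=0$, i.e.\ $K\Psi=P_0\Psi$ and $(I-K)\Psi=(I-P_0)\Psi$; thus the decomposition coincides with the Lebesgue decomposition of $\Psi$ with respect to $\Phi$, which gives~(i).

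For (i)~$\Rightarrow$~(ii) I would argue by contraposition. If $\sD(\Phi,\Psi)$ is not closed, then $\dom T^*$ is not closed, and Lemma~\ref{kontt} applied to $T=L(\Phi,\Psi)$ -- with, for instance, $H=\tfrac12 I$ on a nontrivial closed subspace $\sL\subset\cdom T^*\setminus\dom T^*$, for which $(I-H)^{-1}=2I\in\bB(\sL)$ -- produces a nonnegative contraction $K\in\bB(\sK)$ satisfying \eqref{comsum1aw} and \eqref{comsum1bw}. By Theorem~\ref{mainab} this yields a pseudo-orthogonal Lebesgue type decomposition $\Psi_1=(I-K)\Psi$, $\Psi_2=K\Psi$ of $\Psi$ with respect to $\Phi$. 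It remains to check that it differs from the Lebesgue decomposition, i.e.\ that $(K-P_0)\Psi\neq 0$, equivalently $\ran(K-P_0)\not\subset\ker\Psi^*$. Here $\ran(K-P_0)=\ran G$ is nonzero and contained in $\sL$, while $\ker\Psi^*=\ker L(\Phi,\Psi)^*\subset\dom L(\Phi,\Psi)^*=\sD(\Phi,\Psi)$ and $\sL\cap\sD(\Phi,\Psi)=\{0\}$; hence $\ran G\cap\ker\Psi^*=\{0\}$, so $\ran G\not\subset\ker\Psi^*$. Thus $\Psi$ has a pseudo-orthogonal Lebesgue type decomposition with respect to $\Phi$ other than the Lebesgue one, contradicting~(i).

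The main obstacle I expect is that the correspondence $\Psi_i\mapsto L(\Phi,\Psi_i)$ between decompositions of $\Psi$ and of $L(\Phi,\Psi)$ need not be injective, so one cannot simply invoke the uniqueness statement for the relation; the argument must be run through the parametrization by $K$ and the distinctness verified at the level of $\Psi$ itself. The inclusion $\ker\Psi^*\subset\sD(\Phi,\Psi)$ is exactly what makes this distinctness check in (i)~$\Rightarrow$~(ii) go through, and it is the point I would be most careful about.
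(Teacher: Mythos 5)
Your argument is correct and is essentially the paper's intended proof of this corollary: reduce to the relation $T=L(\Phi,\Psi)$ via the identifications $\dom T^*=\sD(\Phi,\Psi)$, $\ker T^*=\ker\Psi^*$, $\mul T^{**}=\sD(\Phi,\Psi)^\perp$, and rerun the two directions of Theorem \ref{uniq} (using Theorem \ref{mainab} for the parametrization by $K$ and Lemma \ref{kontt} for non-uniqueness), with the distinctness of the new decomposition checked at the level of $\Psi$ itself through $\ker\Psi^*\subset\sD(\Phi,\Psi)$ and $\sL\cap\sD(\Phi,\Psi)=\{0\}$ --- precisely the point the paper leaves implicit. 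The only step you state without justification, writing $K$ in the block form \eqref{konk}, is in fact automatic for any admissible $K$: by Lemma \ref{LebtypelemA}, condition \eqref{RTregb-} gives $\mul T^{**}\subset\ker(I-K)$, i.e.\ $(I-K)P_0=0$, so the self-adjoint contraction $K$ is reduced by $\mul T^{**}\oplus\cdom T^*$, exactly as the paper observes just before Corollary \ref{optim}.
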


\section{Addendum}

In Definition \ref{nonorthsum} the sum $T=T_1+T_2$ in \eqref{ltdsum}
is automatically strict, i.e., $\mul T_1 \cap \mul T_2=\{0\}$,
due to Lemma \ref{nonorthsum1}.
However, the conditions (a) and  (b) in
Definition \ref{nonorthsum} may be weakened  to:
\begin{itemize}
\item[(a)] $\ran T_1 \subset \sX$ and $\ran T_2   \subset \sY$;
\item[(b')] for every $\{f,g\} \in T$ there exist  elements
$\{f,g_1\} \in T_1$, $\{f,g_2\} \in T_2$ with $g=g_1+g_2$,
such that
\[
  \|g\|_\sK^2=\|g_1\|_\sX^2 + \|g_2\|_\sY^2,
\]
and, in addition, $\mul T_1 \cap \mul T_2 \subset XY\, \mul T$.
\end{itemize}
Then  the conditions (a) and (b')  are equivalent to (a) and (b)
in Definition \ref{nonorthsum} if and only if
the sum $T=T_1+T_2$ is strict.
Observe that the additional condition $\mul T_1 \cap \mul T_2 \subset XY\, \mul T$ in (b')
is equivalent to $\mul T_1 \cap \mul T_2 = XY\, \mul T$; it
serves as a smoothness condition, since in general one has  the range inclusion
$\ran T_1 \cap \ran T_2 \subset \ran X^\half \cap \ran Y^\half$.
The assertions in Theorem \ref{prelem0sum} remain valid
in the weaker sense if the directness condition in \eqref{sum3} is dropped.

Similar remarks can be made for the linear relations generated
by a pair of bounded operators.
Recall that the condition $\Psi(\ker \Phi)=\Psi_1(\ker \Phi) + \Psi_2(\ker \Phi)$
in Lemma \ref{uiteind}
is equivalent to the identity \eqref{telaat}.
 The conditions (a) and (b) in Remark \ref{uiteind2} may be weakened to:
\begin{itemize}\def\labelenumi{\rm(\alph{enumi})}
\item[(a)] $\ran \Psi_1 \subset \sX$ and $\ran \Psi_2 \subset \sY$;
\item[(b')]  for all $\eta \in \sE$
there exist elements $\eta', \eta'' \in \sE$
with $\Phi \eta=\Phi \eta'=\Phi \eta''$ and $\Psi \eta =\Psi_1 \eta'+\Psi_2 \eta''$,
such that
\[
 \|\Psi \eta\|_\sK^2=\|\Psi_1 \eta' \|_\sX^2 + \|\Psi_2 \eta'' \|_\sY^2,
\]
while the inclusion $\Psi_1(\ker \Phi) \cap \Psi_2(\ker \Phi) \subset XY\, \Psi(\ker \Phi)$ holds.
\end{itemize}
Then  the conditions (a) and (b')  are equivalent to (a) and (b)
in Remark \ref{uiteind2}
if and only if $\Psi_1(\ker \Phi) \cap \Psi_2(\ker \Phi)=\{0\}$.
In the weaker case the assertions in Theorem \ref{mainab0}
have to be adapted accordingly.
The directness condition \eqref{comsum1aw0} must be dropped
and the condition \eqref{comsum1cw0}
must be replaced by: for every $\eta \in \sE$ there exist $\eta', \eta'' \in \ker \Phi$, such that
\[
 \Psi_1 \eta= (I-K)\Psi (\eta-\eta') \quad \mbox{and} \quad \Psi_2 \eta=K \Psi (\eta-\eta'').
\]

\end{document}